    \numberwithin{equation}{section}    
\tikzset{curve/.style={settings={#1},to path={(\tikztostart)
    .. controls ($(\tikztostart)!\pv{pos}!(\tikztotarget)!\pv{height}!270:(\tikztotarget)$)
    and ($(\tikztostart)!1-\pv{pos}!(\tikztotarget)!\pv{height}!270:(\tikztotarget)$)
    .. (\tikztotarget)\tikztonodes}},
    settings/.code={\tikzset{quiver/.cd,#1}
        \def\pv##1{\pgfkeysvalueof{/tikz/quiver/##1}}},
    quiver/.cd,pos/.initial=0.35,height/.initial=0}
\tikzset{tail reversed/.code={\pgfsetarrowsstart{tikzcd to}}}
\tikzset{2tail/.code={\pgfsetarrowsstart{Implies[reversed]}}}
\tikzset{2tail reversed/.code={\pgfsetarrowsstart{Implies}}}
\tikzset{no body/.style={/tikz/dash pattern=on 0 off 1mm}}
\theoremstyle{definition}
    \newtheorem{remark}[equation]{Remark}
    \newtheorem{warning}[equation]{Warning}
    \newtheorem{definition}[equation]{Definition}
\theoremstyle{plain}
    \newtheorem{theorem}[equation]{Theorem}
    \newtheorem{lemma}[equation]{Lemma}
    \newtheorem{proposition}[equation]{Proposition}
    \newtheorem{corollary}[equation]{Corollary}
    \newtheorem{exercise}[equation]{Exercise}
\newtheorem*{thmintro}{Theorem \ref{thm:tricolimit}}
\newtheorem*{corointro}{Corollary \ref{coro:colimit-cat}}
\newtheorem*{lemmaintro}{Lemma \ref{lem:1-fibrationsliftfractions}}
\newtheorem*{thmintro2}{Theorem \ref{th:exactness}}
\newcommand{\B}{\mathcal{B}}
\newcommand{\E}{\mathcal{E}}
\newcommand{\eps}{\varepsilon}
\newcommand{\cc}[1]{\mathcal{#1}}
\newcommand{\Set}{\textnormal{\textbf{Set}}}
\newcommand{\Cat}{\textnormal{\textbf{Cat}}}
\newcommand{\Bicat}{\textnormal{\textbf{Bicat}}}
\DeclareMathOperator{\el}{{\sf el}} 
\DeclareMathOperator{\coFib}{coFib} 
\newcommand{\yo}{\text{\usefont{U}{min}{m}{n}\symbol{'210}}}
    \DeclareFontFamily{U}{min}{}
    \DeclareFontShape{U}{min}{m}{n}{<-> udmj30}{}
\newcommand{\mr}[1]{\stackrel{#1}{\longrightarrow}}
\newcommand{\ml}[1]{\stackrel{#1}{\longleftarrow}}
\newcommand{\Mr}[1]{\stackrel{#1}{\Rightarrow}}
\newcommand{\Mreq}[1]{\stackrel{#1}{\simeq}}
\newcommand{\xr}[1]{\xrightarrow{#1}}
\newcounter{axiom}
\newcommand{\axiom}[2][\relax]{%
    \if#1\relax{\textnormal{\textbf{#2}}%
            \stepcounter{axiom}%
            \def\@currentlabel{\textnormal{\textbf{#2}}}%
            \def\@currentHref{axiom.\theaxiom}%
            \label{#2}%
            \Hy@raisedlink{\hyper@anchorstart{\@currentHref}}%
            \Hy@raisedlink{\hyper@anchorend}}
    \else{\textnormal{\textbf{#2}}%
            \stepcounter{axiom}%
            \def\@currentlabel{\textnormal{\textbf{#1}}}%
            \def\@currentHref{axiom.\theaxiom}%
            \label{#1}%
            \Hy@raisedlink{\hyper@anchorstart{\@currentHref}}%
            \Hy@raisedlink{\hyper@anchorend}}%
    \fi
}
\newcommand{\itemaxiom}[1][\relax]{%
    \if#1\relax\item\else\item[\axiom{#1}]\fi
}
\newcommand{\refaxioms}[2]{\hyperref[#1]{\textnormal{\textbf{#2}}}}
\def\varcolim@#1#2{%
    \vtop{\m@th\ialign{##\cr
        \hfil$#1\operator@font colim$\hfil\cr
        \noalign{\nointerlineskip\kern1.5\ex@}#2\cr
        \noalign{\nointerlineskip\kern-\ex@}\cr}}
}
\def\Lim{%
    \mathop{\mathpalette\varlim@{\leftarrowfill@\textstyle}}\nmlimits@
}
\def\Colim{%
    \mathop{\mathpalette\varcolim@{\rightarrowfill@\textstyle}}\nmlimits@
}
\newcommand*{\c@rc}{%
    \mathchoice%
        {\raise0.073em\hbox{$\scriptstyle\mathchar"020E$}}%
        {\raise0.073em\hbox{$\scriptstyle\mathchar"020E$}}%
        {\raise0.054em\hbox{$\scriptscriptstyle\mathchar"020E$}}%
        {\raise0em\hbox{$\scriptscriptstyle\mathchar"020E$}}%
}
\newcommand*{\@wto}[2]{%
        \sbox0{$\m@th#1\rightarrow$}%
        \sbox1{$\m@th#1\c@rc$}%
        \dimen@=\wd0%
        \advance\dimen@ by -\wd1 %
        \divide\dimen@ by 2 %
        \rlap{%
            \hskip\dimen@\hbox{$\m@th#1\c@rc$}
        }%
        \copy0
    }
\newcommand{\wto}{\mathrel{\mathpalette\@wto{}}}
\title{The Three \emph{F}'s for Bicategories I:\\ Localization by Fractions is Exact}
\author{Bustillo Vazquez P., Pronk D., Szyld M.}
\date{}
\begin{document}
\maketitle

\begin{abstract}
We study the interaction between the notions of filteredness, fractions and fibrations in the theory of bicategories, generalizing classical results for categories. 
We give an explicit formula for filtered pseudo-colimits of categories indexed by a bicategory, and we use it to compute the hom-categories of a bicategory of fractions.
As a consequence, we show that the canonical pseudo-functor into a bicategory of fractions is exact.
\end{abstract}

\section*{Introduction}
The rich interaction between the three $F$ notions of {\em filtered} categories, categories of {\em fractions} and Grothendieck {\em fibrations} goes back to \cite{GZ67} and to \emph{SGA4} \cite[Exp. VI, \S 6]{GSV72}, where the following results are shown. 
\begin{enumerate}
    \itemaxiom[(A)]
    For any pseudo-functor $F\colon \mathbf{C}^{\textnormal{op}} \to \Cat$ from a category $\mathbf{C}$, its pseudo-colimit is given by the localization at the Cartesian arrows of the fibration determined by its Grothendieck construction $\el F$.
    
    \itemaxiom[(B)]
    For any fibration $P\colon \mathbf{E} \to \mathbf{B}$, if $\mathbf{B}^{\textnormal{op}}$ is pseudofiltered, then its family of Cartesian arrows satisfies the axioms for a calculus of fractions.
    
    \itemaxiom[(C)]
    The hom-sets of a category of right fractions $\mathbf{C}[\cc{W}^{-1}]$ are filtered colimits of hom-sets of $\mathbf{C}$, and the localization functor $\mathbf{C} \to \mathbf{C}[\cc{W}^{-1}]$ is left exact.
\end{enumerate}
Throughout this introduction, we will refer to these results as \ref{(A)}, \ref{(B)}, and \ref{(C)}.

Since then, these three $F$ notions have been generalized to 2-dimensional category theory:
\begin{enumerate}
    \item
    In \cite{Ken92} a set of axioms for \emph{filtered 2-categories} is given, and the same axioms also make sense for bicategories. We define \emph{pseudofiltered bicategories} in Section \ref{sec:filtered}.

    \item
    In \cite{Pro96} the theory of \emph{bicategories of fractions} is developed, generalizing the axioms and the construction in \cite{GZ67}. 

    \item
    \emph{Fibrations of 2-categories and bicategories} are introduced in \cite{Her99,Bak,Buc14}. 
\end{enumerate}

\begin{center}
{\em
    In this paper we show that the results \ref{(A)}, \ref{(B)}, and \ref{(C)} also hold for these three notions.
}
\end{center}

The authors of \cite{Buc14,Bak} generalize the Grothendieck construction $\el F$ to trihomomorphisms $F$ from a bicategory into the tricategory $\Bicat$ of bicategories. Result \ref{(A)} becomes in this context:
\begin{thmintro} 
For any trihomomorphism  $F\colon \mathcal{B} \to \Bicat$ from a bicategory $\mathcal{B}$, its tricolimit in $\Bicat$ is given by the localization of its Grothendieck construction $\el F$ at both the Cartesian arrows and the Cartesian 2-cells.
\end{thmintro}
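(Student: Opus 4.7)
The plan is to verify the universal property of the tricolimit by matching tricocones on $F$ with apex a bicategory $\mathcal{D}$ to pseudo-functors $\el F[\cc{W}^{-1}] \to \mathcal{D}$, where $\cc{W}$ is the class of Cartesian arrows together with the Cartesian $2$-cells. The strategy is the tricategorical upgrade of the classical slogan that the Grothendieck construction classifies lax cocones and becomes the pseudo-colimit only after the Cartesian data has been inverted.

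The proof splits into three layers. First, I would establish (or cite from \cite{Buc14,Bak}) that $\el F$ has the universal property of a lax tricolimit of $F$: pseudo-functors $\el F \to \mathcal{D}$ correspond triequivalently to lax tricocones on $F$ with apex $\mathcal{D}$, via restriction to the fibers. Second, I would identify which pseudo-functors $\el F \to \mathcal{D}$ correspond to \emph{pseudo} (as opposed to merely lax) tricocones; the expected answer is precisely those that invert the Cartesian arrows and Cartesian $2$-cells, since the lax-to-pseudo distinction is measured by the invertibility of the coherence constraints, and these constraints are the images under the pseudo-functor of the universal Cartesian lifts in $\el F$. Third, the universal property of the bicategory of fractions from \cite{Pro96} provides a triequivalence between pseudo-functors $\el F \to \mathcal{D}$ that send $\cc{W}$ to invertibles and pseudo-functors $\el F[\cc{W}^{-1}] \to \mathcal{D}$. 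Composing the three triequivalences yields that $\el F[\cc{W}^{-1}]$ represents tricocones on $F$ with apex $\mathcal{D}$, naturally in $\mathcal{D}$, which is the defining property of the tricolimit.

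The main obstacle will be the second step, particularly at the $2$- and $3$-cell level. A tricocone carries not only pseudo-functor components and pseudo-natural equivalences, but also invertible modifications and perturbations witnessing the higher coherences of the assembly, and each of these must be matched exactly with data on the pseudo-functor side. The bookkeeping is nontrivial: one has to trace how Cartesian $2$-cells over $2$-cells of $\mathcal{B}$ produce the modifications of a tricocone, and how the coherence of $F$ at the level of $3$-cells of $\mathcal{B}$ controls the perturbations. The explicit formula for filtered pseudo-colimits of hom-categories developed elsewhere in the paper, and the resulting description of the hom-categories of $\el F[\cc{W}^{-1}]$, should make these matches amenable to direct verification by reducing the tricategorical coherence to successive applications of the one-dimensional universal property of filtered pseudo-colimits of categories.
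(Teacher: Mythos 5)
Your overall strategy --- identify the pseudo-functors out of $\el F$ that invert the Cartesian data with tricocones on $F$, then invoke the universal property of the localization --- has the same endpoints as the paper's argument, but the route you take through the middle has a genuine gap. Your first two steps rest on the claim that $\el F$ is the \emph{lax} tricolimit of $F$ (pseudo-functors $\el F \to \mathcal{D}$ triequivalent to lax tricocones) and that the pseudo tricocones are then cut out exactly by inverting the Cartesian arrows and 2-cells. Neither statement is available in \cite{Buc14} or \cite{Bak}, and proving them at the tricategorical level is precisely the ``nontrivial bookkeeping'' of modifications and perturbations that you flag as the main obstacle; it would be a substantial new theorem, harder than the one being proved. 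Moreover, the tool you propose for that verification --- the explicit formula for filtered pseudo-colimits of hom-categories --- is not applicable here: it requires $\mathcal{B}$ pseudofiltered and $F$ valued in $\Cat$, whereas Theorem \ref{thm:tricolimit} concerns an arbitrary $\mathcal{B}$ and a $\Bicat$-valued $F$ (indeed the paper does not even construct the localization at 2-cells in this generality; the theorem is an identification of two universal properties). Your third step also cannot be cited from \cite{Pro96}, which localizes only at arrows; the triequivalence $[\el F,\mathcal{D}]_{\cc{C}_{1,2}} \simeq [(\el F)[\cc{C}_{1,2}^{-1}],\mathcal{D}]$ is simply Definition \ref{def:localizationformodel}.

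The paper avoids the lax-cocone detour entirely. What \emph{is} in \cite{Buc14} (Prop.~3.3.12, quoted as Proposition \ref{prop:grothendiecklocalequivalence}) is the local biequivalence $[\mathcal{B},\Bicat](F,G) \simeq \coFib(\mathcal{B})(\el F,\el G)$. Applying it with $G = \Delta\mathcal{D}$ identifies tricocones directly with \emph{Cartesian} morphisms $\el F \to \el(\Delta\mathcal{D})$ over $\mathcal{B}$; since $\el(\Delta\mathcal{D}) \cong \mathcal{B}\times\mathcal{D}$ (Remark \ref{rem:clearonobjects}) and a morphism over $\mathcal{B}$ into this product is Cartesian exactly when its second component sends Cartesian arrows to equivalences and Cartesian 2-cells to isomorphisms (Remark \ref{rem:constant}), one obtains $[\mathcal{B},\Bicat](F,\Delta\mathcal{D}) \simeq [\el F,\mathcal{D}]_{\cc{C}_{1,2}}$ in two lines, with no mention of lax cocones at all. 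If you want to salvage your outline, replace your steps 1 and 2 by this single application of the local biequivalence to the constant trihomomorphism.
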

\noindent We show how, with the proper setup, the proof of this theorem can be given as a one-line computation using the fact, shown in \cite[Prop 3.3.12]{Buc14}, that the Grothendieck construction provides an equivalence between trihomomorphisms into $\Bicat$ and fibrations between bicategories.
Note that Theorem \ref{thm:tricolimit} involves the localization of a bicategory at both arrows and 2-cells, which we define but won't use at this level of generality (in \cite{BPS}, the sequel to this paper, we will compute this tricolimit for the case when $\mathcal{B}$ is filtered).
The less general situation that is relevant to us here is that of a pseudo-functor $F\colon \mathcal{B} \to \Cat$ from a bicategory $\mathcal{B}$, for which we show that it suffices to localize only at the Cartesian arrows (in what follows $\pi_0$ denotes the left adjoint to the inclusion $d$ of $\Cat$ into $\Bicat$):
\begin{corointro}
For any pseudo-functor $F\colon \mathcal{B} \to \Cat$ from a bicategory $\mathcal{B}$, its pseudo-colimit is given by the localization of $\pi_0 (\el d F)$ at the family of equivalence classes of Cartesian arrows.
\end{corointro}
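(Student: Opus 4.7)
The plan is to derive the Corollary from Theorem \ref{thm:tricolimit} applied to the composite trihomomorphism $dF\colon \mathcal{B} \to \Bicat$, by transporting the result through the adjunction $\pi_0 \dashv d$.

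First I would observe that, for any pseudo-functor $F\colon \mathcal{B} \to \Cat$, the pseudo-colimit of $F$ in $\Cat$ can be computed as $\pi_0$ of the tricolimit of $dF$ in $\Bicat$. Indeed, for any category $\mathbf{D}$, the fact that $d\mathbf{D}$ has only identity 2-cells (and hence only identity 3-cells) makes tricones from $dF$ into $d\mathbf{D}$ coincide with pseudocones from $F$ into $\mathbf{D}$; combining this with $\pi_0 \dashv d$ yields the required universal property.

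Next, by Theorem \ref{thm:tricolimit} the tricolimit of $dF$ is the bicategorical localization $\el(dF)[W^{-1},\Sigma^{-1}]$ at the Cartesian arrows $W$ and the Cartesian 2-cells $\Sigma$, so the pseudo-colimit of $F$ becomes $\pi_0\bigl(\el(dF)[W^{-1},\Sigma^{-1}]\bigr)$. The final step is to identify this with $\pi_0(\el dF)[\overline{W}^{-1}]$, where $\overline{W}$ denotes the equivalence classes of Cartesian arrows. I would match universal properties: a functor out of either side into a category $\mathbf{D}$ corresponds to a pseudo-functor $\el(dF) \to d\mathbf{D}$ that sends $W$ to equivalences and $\Sigma$ to isomorphisms. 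Since $d\mathbf{D}$ has only identity 2-cells, the condition on $\Sigma$ is vacuous and equivalences in $d\mathbf{D}$ coincide with isomorphisms; such data amount to a functor $\pi_0(\el dF) \to \mathbf{D}$ that inverts the images of $W$, i.e.\ the classes $\overline{W}$.

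The hard part will be the last identification: it requires the universal property of the bicategorical localization $\el(dF)[W^{-1},\Sigma^{-1}]$ appearing in Theorem \ref{thm:tricolimit} to be spelled out precisely enough to interact cleanly with $\pi_0$, and some care is needed to check that two arrows of $\el(dF)$ which become isomorphic after inverting $W$ and $\Sigma$ already become equal in $\pi_0(\el dF)[\overline{W}^{-1}]$. Once this compatibility is laid out, the argument is essentially formal manipulation of adjunctions and universal properties.
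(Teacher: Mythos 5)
Your overall strategy --- specialize Theorem \ref{thm:tricolimit} to $dF$, observe that against a discrete target $d\mathbf{D}$ the condition on Cartesian 2-cells is vacuous and equivalences are isomorphisms, and then use $\pi_0\dashv d$ to land in a 1-categorical localization --- is exactly the strategy of the paper, whose proof is the chain of natural isomorphisms
\[
[\mathcal{B},\Cat](F,\Delta\mathbf{E})
\cong [\mathcal{B},\Bicat](dF,\Delta d\mathbf{E})
\cong [\el(dF),d\mathbf{E}]_{\mathcal{C}_{1,2}}
\cong [\pi_0\,\el(dF),\mathbf{E}]_{[\mathcal{C}_1]}
\cong \big[(\pi_0\,\el(dF))[[\mathcal{C}_1]^{-1}],\mathbf{E}\big].
\]
However, the way you organize the argument has a genuine gap: both of your intermediate objects --- the tricolimit of $dF$ in $\Bicat$ and, equivalently, the bicategorical localization $\el(dF)[W^{-1},\Sigma^{-1}]$ at the Cartesian arrows \emph{and} 2-cells --- are not known to exist. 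Theorem \ref{thm:tricolimit} only identifies two trirepresentability properties with one another; it does not produce a representing bicategory, and the paper explicitly notes that localizing a bicategory at both arrows and non-trivial 2-cells has never been constructed (compare Corollary \ref{coro:arrows-sufficient}, where existence of such a localization has to be assumed as a hypothesis). So ``apply $\pi_0$ to the tricolimit'' is applying $\pi_0$ to an object you do not have. The repair is to never form the intermediate object: run the comparisons from the proof of Theorem \ref{thm:tricolimit} against discrete targets $d\mathbf{E}$ from the outset, so that the only localization whose existence is ever needed is the 1-categorical one $(\pi_0\,\el(dF))[[\mathcal{C}_1]^{-1}]$ (your $\pi_0(\el dF)[\overline{W}^{-1}]$), which does exist.

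A second, smaller issue is strictness. The universal property of a tricolimit is only a biequivalence of hom-bicategories, so your route delivers at best the weak (bicolimit) universal property, whereas the statement asserts the pseudo-colimit, i.e.\ a strict representation $[\mathcal{B},\Cat](F,\Delta\mathbf{E})\cong\Cat(L,\mathbf{E})$. The paper addresses this by checking that, in the discrete case, the only a priori non-strict link in the chain --- the local biequivalence of the Grothendieck construction in Proposition \ref{prop:grothendiecklocalequivalence} --- is actually an isomorphism, because the inverse Grothendieck construction is a genuine inverse when the fibres are categories. Your sketch would need this additional observation to conclude the statement as written rather than its bicolimit variant.
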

\noindent This result generalizes \ref{(A)} to the case in which $\cc{C}$ is a bicategory.

We also show (see Corollary \ref{coro:BpseudofilteredEfractionsasinSGA}) that \ref{(B)} still holds when each of the three $F$ notions is replaced by their bicategorical analogues introduced in items 1, 2, and 3 above.  
This follows from the following more general Lifting Fractions Lemma (since in a co-pseudofiltered bicategory, the collection of all arrows satisfies right fractions).
\begin{lemmaintro}[Lifting Fractions Lemma]
For a fibration of bicategories $\cc{E} \to \cc{B}$, if a family of arrows in $\cc{B}$ satisfies right fractions, then so does the family of Cartesian arrows over it.
\end{lemmaintro}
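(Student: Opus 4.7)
The strategy is to verify each of the right fractions axioms of \cite{Pro96} for the family $\widetilde{\cc{W}}$ of Cartesian arrows in $\cc{E}$ lying over members of $\cc{W}$. For each axiom, the data required in $\cc{E}$ will be projected down to $\cc{B}$ by $P$, the corresponding axiom for $\cc{W}$ applied to produce witnessing data in $\cc{B}$, and then everything lifted back to $\cc{E}$ using Cartesian lifts of 1-cells together with the universal property of Cartesian arrows for lifting 2-cells.

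The closure axioms are essentially immediate: equivalences are always Cartesian, composites of Cartesian arrows are Cartesian, and Cartesianness is stable under invertible 2-cells, so closure of $\widetilde{\cc{W}}$ under identities, composition, and 2-isomorphism reduces to the corresponding closure properties of $\cc{W}$.

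The substantive axiom is the Ore-type condition. Given $f \colon A \to C$ in $\cc{E}$ and a Cartesian arrow $\widetilde w \colon B \to C$ over some $w = P(\widetilde w) \in \cc{W}$, I would apply the Ore condition in $\cc{B}$ to the cospan $(P(f), w)$ to obtain $v \in \cc{W}$, an arrow $g \colon D \to P(B)$, and an invertible 2-cell $\alpha \colon w g \Rightarrow P(f)\, v$. Next, choose a Cartesian lift $\widetilde v \in \widetilde{\cc{W}}$ of $v$. Then $f \widetilde v$ lies over $P(f)\, v$, and the universal property of $\widetilde w$, applied to $f \widetilde v$ with the reparameterization $\alpha^{-1}$, produces an arrow $\widetilde g$ over $g$ together with an invertible 2-cell $\widetilde w \widetilde g \Rightarrow f \widetilde v$ lifting $\alpha$, giving the required span in $\cc{E}$. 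The 2-cell axiom is handled analogously: a 2-cell $\widetilde w f \Rightarrow \widetilde w g$ in $\cc{E}$ projects to $w P(f) \Rightarrow w P(g)$ in $\cc{B}$, from which the fractions axiom in $\cc{B}$ produces $v \in \cc{W}$ and a 2-cell $\beta$ to be lifted; the 2-dimensional universal property of Cartesian 1-cells then provides the lift.

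\emph{Main obstacle.} The real work is coherence bookkeeping. The fractions axioms in $\cc{B}$ produce data only up to invertible 2-cells, and the Cartesian universal property produces lifts only up to essentially unique invertible 2-cells; correctly composing these comparison isomorphisms while respecting the associators and unitors of $\cc{E}$ is where the genuine bicategorical content of the lemma lives. In particular, the essential uniqueness statement in the 2-cell axiom will require an additional application of the 2-cell lifting property to two candidate witnesses, carefully tracing the comparison 2-cells through the fibration structure in order to identify them.
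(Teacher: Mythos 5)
Your overall architecture --- project the data to $\cc{B}$ with $P$, apply the fractions axioms there, and lift the resulting data back to $\cc{E}$ using Cartesian lifts of 1-cells together with the essential surjectivity, fullness and faithfulness encoded in the universal property of Cartesian arrows (items 0--2 of Lemma \ref{lem:cartesian-arrow}) --- is exactly the paper's. Your treatment of the closure axioms, of the Ore condition (including the reparameterization by $\alpha^{-1}$ before invoking the lifting property of $\widetilde w$), and of the first half of the 2-cell axiom matches the paper's verification of \ref{0-Frc} and \ref{1-Frc} essentially step for step; the ``coherence bookkeeping'' you flag is real but routine (in the paper it is a single commuting cube of structure cells).

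The one place where your plan has a genuine gap is the final clause of \refaxioms{BF}{BF4}: the requirement that any two triples $(A_1,u_1,\beta_1)$, $(A_2,u_2,\beta_2)$ admit a common refinement $(s,t,\eps)$ making the two pastings in \eqref{eq:fromBF4} equal. You propose to handle this by ``an additional application of the 2-cell lifting property to two candidate witnesses,'' but the 2-cells that must be identified there live between arrows into the \emph{domain} of the Cartesian arrow $w$; they are neither of the form $w\star(-)$ nor equal after applying $P$, and they only become equal after whiskering $w$ \emph{and} precomposing with a further arrow of $\cc{W}$. So neither the faithfulness clause of Lemma \ref{lem:cartesian-arrow} nor a direct transport of the corresponding clause from $\cc{B}$ applies as stated, and the lift of the comparison cell $\eps$ itself does not fit the format of the 2-cell lifting property (there is no Cartesian arrow being whiskered). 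The paper sidesteps this entirely: Proposition \ref{prop:strongequivaxiom} shows that, given the closure axioms, $\refaxioms{BF}{BF3}+\refaxioms{BF}{BF4}$ is equivalent to the simpler set $\ref{0-Frc}+\ref{1-Frc}+\ref{2-Frc}$, in which the troublesome clause is replaced by the equifier axiom \ref{2-Frc} --- and \emph{that} axiom does lift immediately (apply $P$, use \ref{2-Frc} in $\cc{B}$, take a Cartesian lift of the resulting $u$, and conclude by faithfulness). Your proof becomes correct if you insert this reduction: verify \ref{2-Frc} for the Cartesian arrows as just described and then recover the last clause of \refaxioms{BF}{BF4} purely formally from $\ref{0-Frc}+\ref{2-Frc}$, rather than attempting to lift it directly. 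This reduction is not a cosmetic convenience; the paper states that this very proof was the motivation for introducing the new axiom set.
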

\noindent We prove this lemma using a new set \refaxioms{0-Frc}{Frc} of fractions axioms for a family of arrows of a bicategory that we introduce. This set of axioms is simpler but equivalent to the original one in \cite{Pro96}. 
Working with the new set of axioms instead of the original one makes a significant difference here.
This proof provided the main motivation for the new formulation of the axioms.

Since applying $\pi_0$ to a bicategorical calculus of fractions yields a 1-dimensional calculus of fractions (see Lemma \ref{lem:pi0}), it follows that if $\mathcal{B}$ is pseudofiltered then the localization in Corollary \ref{coro:colimit-cat} can be computed as a category of fractions. 
This yields an explicit formula for filtered pseudo-colimits of categories indexed by a bicategory in terms of the data of $\cc{B}$ and the image of $F$.
We give the details in Proposition \ref{prop:filteredcolimit}.
We note that a similar formula is given in \cite{DS06} for the case in which $\cc{B}$ is a (strict) 2-category, and $F$ is a (strict) 2-functor; see also \cite{Dat14} for the relation with the bicategory of fractions.
Our formula in Proposition \ref{prop:filteredcolimit} is the generalization of the formula in \cite{DS06} to the non-strict case, and can immediately be seen to match theirs when the coherence datum is dropped. 
We sketch now how we apply this formula in  Section \ref{sec:homcat} to establish result \ref{(C)} for bicategories of fractions, where  both $\cc{B} \coloneqq (\mathcal{W}/A)^{\textnormal{op}}$ and $F \coloneqq F_A^B$ are non-strict.

Recall from \cite{GZ67} that the hom-sets of the category of fractions are given as filtered colimits of sets.
More precisely, for a family $\cc{W}$ of arrows of a category $\cc{C}$, and a pair of objects $A,B$, we can define a slice category $\mathcal{W}/A$, whose objects are the arrows in $\cc{W}$ with codomain $A$ and whose arrows are given by commutative triangles, and a functor
\begin{equation}\label{eq:Fab}
    F_A^B\colon
        (\mathcal{W}/A)^{\textnormal{op}}
        \mr{U}
        \cc{C}^{\textnormal{op}}
        \xr{\cc{C}(-,B)}
        \Set
\end{equation}
where $U$ is the functor that maps each arrow to its domain.
When $\cc{W}$ satisfies fractions, this is a filtered diagram of sets whose colimit is $\cc{C}[\cc{W}^{-1}](A,B)$.
We show in Section \ref{subsec:generalizingGZ} that a similar description exists for the hom-categories of a bicategory of fractions.
For a family $\cc{W}$ of arrows of a bicategory $\cc{B}$, and a pair of objects $A,B$, we define a slice bicategory $\mathcal{W}/A$ and a pseudo-functor
\begin{equation*}
    F_A^B\colon
        (\mathcal{W}/A)^{\textnormal{op}}
        \mr{U}
        \cc{B}^{\textnormal{op}}
        \xr{\cc{B}(-,B)}
        \Cat
\end{equation*}
similarly to \eqref{eq:Fab}.
As a further application of the {Lifting Fractions} Lemma \ref{lem:1-fibrationsliftfractions} to the fibration given by $U$, we show that if $\cc{W}$ satisfies fractions then $(\mathcal{W}/A)^{\textnormal{op}}$ is a filtered bicategory. Furthermore, we show that the formula for the pseudo-colimit of $F_A^B$, which one gets from Proposition \ref{prop:filteredcolimit}, 
can be seen to match the construction in \cite{Pro96} of the hom-categories $\cc{B}[\cc{W}^{-1}](A,B)$.
This is the content of Proposition \ref{prop:homsascolim}, establishing the first part of the result \ref{(C)} for bicategories of fractions.

Since composition of arrows in a 1-category of fractions does not depend on the choice of the Ore squares used in the construction, we obtain as a first consequence of Proposition \ref{prop:homsascolim} that the vertical composition of 2-cells in \cite[p.258]{Pro96} does not depend on the choice of Ore squares in $\cc{B}$.
The fact that the vertical composition of 2-cells is independent of these choices was also proved directly in \cite[Prop. 5.1]{Tom16}.

As another application of Proposition \ref{prop:homsascolim}, we show in Section \ref{subsec:exactness} how this result can be combined with an exactness property of $\Cat$ to show the second part of result \ref{(C)} for bicategories.
In \cite{Can16}, see also \cite{DDS18}, the commutativity in $\Cat$ of filtered pseudo-colimits and finite weighted limits is shown, when these limits are indexed by strict 2-functors with 2-categories as domain.
We use the facts that any bicategory is biequivalent to a 2-category and that any $\Cat$-valued pseudo-functor from a 2-category is equivalent to a 2-functor to deduce that this commutativity still holds for pseudo-functors from bicategories.
Finally, following the same method of the 1-dimensional original proof in \cite[\S I.3]{GZ67}, we combine this commutativity result with Proposition \ref{prop:homsascolim} to prove the following theorem, that finishes to establish \ref{(C)} for bicategories:
\begin{thmintro2}
Let $\mathcal{B}$ be a bicategory and $\mathcal{W}$ be a right bicalculus of fractions.
Then the localization pseudo-functor $\mathcal{B} \to \mathcal{B}[\mathcal{W}^{-1}]$ commutes with finite weighted bilimits. 
\end{thmintro2}

\subsection*{Organization}
The paper is organized as follows.
In Sections \ref{subsec:notations} and \ref{subsec:facts}, we introduce the notation we will be using and recall basic properties of bicategory theory.
In Section \ref{subsec:fibrations}, we introduce some classical properties of fibrations of bicategories that we will need in the paper.

In Section \ref{sec:filtered}, we introduce a new set of axioms for a pseudofiltered bicategory, that can be related to the filtered axioms for a 2-category in \cite{Ken92} just like the axioms for a pseudofiltered category are related to the axioms for a filtered category in \emph{SGA 4} \cite{GSV72} (see Propositions \ref{prop:connected} and \ref{prop:pflt-axioms}).
We show how these axioms are equivalent to (a subset of) the ones introduced in \cite{DS06} and corrected in \cite{DS21}.

In Section \ref{sec:diagrammatic}, we introduce a new set \refaxioms{0-Frc}{Frc} of fractions axioms for a family $\cc{W}$ of arrows of a bicategory and we show that these axioms are equivalent to the original ones in \cite{Pro96} (and to the modified version that can be found in \cite{PS21}).
Using these axioms, we show the {Lifting Fractions} Lemma \ref{lem:1-fibrationsliftfractions} mentioned in the introduction, and its Corollary \ref{coro:BpseudofilteredEfractionsasinSGA} that establishes the analogue of result \ref{(B)} for bicategories.

In Section \ref{sec:colimits}, Theorem \ref{thm:tricolimit}, we generalize result \ref{(A)} to an arbitrary trihomomorphism $F$ into $\Bicat$.
We then show in Corollary \ref{coro:colimit-cat} how, when $F$ takes its values in $\Cat$, we obtain a simpler formula for its pseudo-colimit as a localization of a 1-category, which can be computed by fractions when the indexing bicategory is pseudofiltered.
We give the explicit formula of this pseudo-colimit in Proposition \ref{prop:filteredcolimit}.

Finally, in Section \ref{sec:homcat} we show result \ref{(C)} for bicategories, as detailed above.

\noindent{\bf Acknowledgements.}  
The second author acknowledges the support of the Natural Sciences and Engineering Research Council
of Canada (NSERC) through a Discovery Grant.
The third author acknowledges the support of the NSERC and the Atlantic Association for Research in the Mathematical Sciences (AARMS).

We wish to thank Eduardo J. Dubuc for several helpful conversations on the subject.

\section{Preliminaries}\label{sec:preliminaries}

\subsection{Notation}\label{subsec:notations}
Throughout the paper, in addition to the usual \texttt{hyperref} links, we also use links to tie together the different axioms that are introduced: clicking on the name of an axiom (that we denote with boldface letters) will take the reader to its definition.

\begin{enumerate}[align=left, leftmargin=*]
    \item
    Abstract categories are written using a bold font style ($\mathbf{A}, \mathbf{B}, \mathbf{C}$), and  bicategories using a calligraphic font style ($\mathcal{A}$, $\mathcal{B}$, $\mathcal{C}$).
    $\Set$ denotes the category of sets, $\Cat$ denotes the 2-category of categories, and $\Bicat$ denotes the tricategory of bicategories \cite{GPS95}. 

    \item
    To introduce elements in a (bi)category, we use the type-theory inspired notation ``$A \colon \mathcal{A}$'' rather than the usual set-theory inspired notation ``$A \in \mathcal{A}$''.
    We use a classical ``trickle down'' abbreviation of typing declarations, for example $f\colon A \to B \colon \cc{A}$ is short for $A,B \colon \cc{A}$ and $f: A \to B$.
    
    \item
    We write $f\colon A \simeq B\colon \mathcal{A}$ to denote an {equivalence} $f$ from $A$ to $B$ in the bicategory $\mathcal{A}$.
    We write \mbox{$\alpha\colon f \simeq g\colon A \to B\colon \cc{A}$} to denote an invertible 2-cell $\alpha$ from $f$ to $g$.
    Isomorphisms of (bi)categories are denoted by $\cong$.
    
    \item
    As in \cite{JS17}, we use $\yo$ (the first letter of “Yoneda” when written in hiragana) to denote Yoneda embeddings.
    
    \item
    For $\mathcal{A},\mathcal{B}\colon \Bicat$, we denote by $[\mathcal{A},\mathcal{B}]$ the bicategory of pseudo-functors, pseudo-natural transformations and modifications. This is the hom-bicategory $\Bicat(\cc{A},\cc{B})$ of the tricategory $\Bicat$.
\end{enumerate}

\subsection{A few bicategorical facts}\label{subsec:facts}
We study here bicategories as introduced by Bénabou \cite{Ben67}, following mostly the notation from the book ``2-Dimensional Categories'' written by Niles Johnson and Donald Yau \cite{JY20}.
We recall here the main definitions of bicategory theory to fix our notation for the rest of this paper.
We omit details which are ubiquitous in the literature, referring the reader to \cite{JY20}, among many other choices.

\begin{definition}[Bicategory]\label{def:bicategory}
A \textbf{bicategory} $\mathcal{B}\colon\Bicat$ is the data of objects, arrows between objects and 2-cells between parallel arrows together with:
\begin{enumerate}[label = $\bullet$]
    \item
    A \textbf{vertical composition} that we denote by $\circ$, which makes $\mathcal{B}(A,B)$ into a category for all $A,B\colon \mathcal{B}$. For each arrow $f\colon A \to B\colon\mathcal{B}$ we denote its {identity 2-cell} by $1_f\colon f \to f$.
    
    \item
    A \textbf{horizontal composition} which is functorial in the vertical composition:
    \[
        c_{A,B,C}\colon
            \mathcal{B}(B,C) \times \mathcal{B}(A,B)
            \longrightarrow
            \mathcal{B}(A,C) \colon
                \Cat
    \]
    We denote horizontal composition of arrows by $\circ$, and horizontal composition of 2-cells by $\star$. By a usual abuse of notation, we use the same symbol $\star$ for the {\em whiskering} of arrows with 2-cells: $\beta \star f \coloneqq \beta \star 1_f$ and $g \star \alpha \coloneqq 1_g \star \alpha$.
    
    We allow ourselves to omit the symbol $\circ$ both for the horizontal composition of arrows and the vertical composition of 2-cells.
    
    \item
    An \textbf{identity arrow} $1_A\colon A\to A$ for each object $A\colon\mathcal{B}$, \textbf{right} and \textbf{left unitors} $r_u\colon u1_A \simeq u$ and $l_u\colon 1_Bu \simeq u$ respectively for each $u\colon A \to B\colon \mathcal{B}$, and \textbf{associators} $a_{u,v,w}\colon (uv)w \simeq u(vw)$ for each triple $u,v,w$ of composable arrows.
    Unitors and associators are required to be natural, and the triangle and pentagon axioms are required to hold.
\end{enumerate}
\end{definition}

We will assume that the reader is familiar with the idea of pasting diagrams and with the coherence theorems of bicategory theory.

\begin{definition}[Functors between Bicategories]\label{def:pseudo-functor} A \textbf{pseudo-functor} $F\colon \mathcal{A} \to \mathcal{B}\colon \Bicat$ is the data of a mapping of objects to objects, a mapping of arrows to arrows and a mapping of 2-cells to 2-cells, compatible with domain and codomain, together with:
\begin{enumerate}[label = $\bullet$]
    \item
    A natural \textbf{functoriality constraint} $F^2_{u,v}\colon Fu \circ Fv \simeq F(u\circ v)$ for each pair $u,v$ of composable arrows.
    
    \item
    A \textbf{unity constraint} $F^0_A\colon 1_{FA} \simeq F(1_A)$ for each object $A\colon \mathcal{B}$.
\end{enumerate}
This data is subject to four axioms: local functoriality, the lax associativity, lax left and lax right unity axioms. 
For each $A,B\colon \mathcal{A}$ we denote the \emph{local hom-functors} by  $F_{A,B}\colon \mathcal{A}(A,B) \to \mathcal{B}(FA,FB)$.
\end{definition}

\begin{remark}[Orientation of Diagrams]
As much as possible, throughout this paper, arrows and 2-cells in diagrams are oriented downwards or rightwards.
In particular, when the symbol ``$\simeq$'' is used in a diagram to denote an invertible 2-cell, its orientation, if unclear, is assumed to be governed by this general principle.
\end{remark}

\begin{remark}[Equivalences]\label{rem:equivalences}\leavevmode
\begin{enumerate}[label = $\bullet$]
    \item
    We say that a pseudo-functor $F\colon\mathcal{A} \to \mathcal{B} \colon \Bicat$ is a \emph{biequivalence}, or equivalently an equivalence, when there exists a pseudo-functor $G:\mathcal{B} \to \mathcal{A}$ and natural equivalences $id_{\cc{A}} \simeq G F$, $F G \simeq id_{\cc{B}}$ in the bicategories $[\cc{A},\cc{A}]$ and $[\cc{B},\cc{B}]$.
    
    \item
    $F\colon \mathcal{A} \to \mathcal{B}$ is a biequivalence if and only if $F$ is \emph{essentially surjective} (for all $B\colon \mathcal{A}$, there exists $A\colon \mathcal{A}$ and $f\colon B\simeq FA$) and locally an equivalence (each $F_{A,B}$ is an equivalence).
    Note that this characterization of biequivalences requires the axiom of choice.
    
    \item
    Recall that a pseudo-natural transformation $\alpha \colon F \to B \colon \mathcal{A} \to \mathcal{B}\colon \Bicat$ is an equivalence in $[\cc{A},\cc{B}]$ if and only if $\alpha_A$ is an equivalence in $\mathcal{B}$ for each $A\colon \mathcal{A}$.
\end{enumerate}
\end{remark}

\subsection{On fibrations and 1-fibrations of bicategories}\label{subsec:fibrations}
We begin by recalling some basic definitions of the theory of fibrations of bicategories from \cite{Buc14}, in a way that fits our purpose.
We fix $P \colon \E \to \B$ a pseudo-functor between bicategories.
Recall that an arrow $f\colon X \to Y\colon \E$ is Cartesian if and only if, for every $Z\colon\mathcal{E}$, the canonical diagram
\begin{equation}\label{eq:cartesian-arrow}\xymatrix{
    \mathcal{E}(Z,X) \ar[r]^{f_*}
        \ar[d]_{P_{Z,X}}
        \ar@{}[dr]|{\Mreq{P^{2}_{f,-}}}
    & \mathcal{E}(Z,Y) \ar[d]^{P_{Z,Y}} \\
    \mathcal{B}(PZ,PX) \ar[r]_{(Pf)_*}
    & \mathcal{B}(PZ,PY)
}\end{equation}
exhibits the category $\mathcal{E}(Z,X)$ as a bipullback of $\mathcal{E}(Z,Y)$ and $\mathcal{B}(PZ,PX)$ over $\mathcal{B}(PZ,PY)$.
We denote by $\mathcal{C}_1$ the family of Cartesian arrows of $P$.

Recall also that $\Cat$ has pseudo-pullbacks, which are computed as follows: given a cospan of functors \mbox{${\bf A} \mr{F} {\bf B} \ml{G} {\bf C}$,} the objects in the pseudo-pullback ${\bf A} \times_{{\bf B}} {\bf C}$ are given by triples $(A,C,\alpha)$, where $FA \Mreq{\alpha} GC$ is an invertible arrow, and the arrows $(A,C,\alpha) \to (A',C',\alpha')$ by pairs $A \mr{f} A'$, $C \mr{f} C'$ such that $\alpha' (Ff) = (Gg) \alpha$.
Then, we can express the statement above that ``the canonical diagram exhibits the category $\mathcal{E}(Z,X)$ as a bipullback..." equivalently as ``the comparison morphism 
\begin{equation}\label{eq:from-Buckley-312}
    \mathcal{E}(Z,X)
    \xr{(f_*, P_{Z,X},P^{2}_{f,-})}
    \mathcal{E}(Z,Y)
    \times_{\mathcal{B}(PZ,PY)}
    \mathcal{B}(PZ,PX)
\end{equation}
is an equivalence of categories".
Writing explicitly what it means for the functor in \eqref{eq:from-Buckley-312} to be respectively essentially surjective, full, and faithful, we have the following detailed description of the properties of a Cartesian arrow, equivalent to the ones in \cite[Def. 3.1.1]{Buc14}.

\begin{lemma}\label{lem:cartesian-arrow}
An arrow $f\colon X \to Y\colon \E$ is Cartesian if and only if it satisfies
\begin{enumerate}[label=$\arabic*.$]\setcounter{enumi}{-1}
    \item
    For each $Z\colon \E$, $g \colon Z \to Y$, $h\colon PZ \to PX$ with an isomorphism $\alpha \colon Pf \circ h \simeq Pg$
    \begin{equation*}
        \vcenter{\xymatrix@C+15pt@R+15pt{
            Z  \ar[dr]^{g}
                \ar@/_1pc/@{.>}[d]_{\hat{h}}
            & \\
            X \ar[r]_{f}
                \ar@{}[ur]|(0.35){\Mreq{\hat{\alpha}}}
            & Y
        }}
        \quad\quad \textrm{\rotatebox{180}{$\leadsto$}} \quad\quad
        \vcenter{\xymatrix@C+15pt@R+15pt{
            PZ \ar@/^1pc/[d]|{h}
                \ar@/_1pc/@{.>}[d]_{P\hat{h}}
                \ar[dr]^{Pg}
                \ar@{}[d]|{\Mreq{\hat{\beta}}}
            & \\
            PX \ar[r]_{Pf}
                \ar@{}[ur]|(0.4){\Mreq{\alpha}}
            & PY
        }}
    \end{equation*}
    there exists an $\hat{h}\colon Z \to X$ and isomorphisms $\hat{\alpha} \colon f\hat{h} \simeq g$, $\hat{\beta} \colon P\hat{h} \simeq h$ such that $\alpha \circ (Pf\star\hat{\beta}) = P\hat{\alpha} \circ P^2_{h,f}$.
    We say that $(\hat{h},\hat{\alpha},\hat{\beta})$ is a \emph{lift} of $(h, \alpha)$, or that it \emph{factors} $g$ through (the Cartesian arrow) $f$ above $\alpha$.

    \item
    If $g, h\colon Z \to X$ are arrows in $\cc{E}$, and there are 2-cells $\alpha \colon fg \Rightarrow fh$ in $\cc{E}$, $\beta \colon Pg \Rightarrow Ph$ in $\cc{B}$, such that $P\alpha$ equals the composition $P(fg) \simeq Pf\circ Pg \Mr{Pf \star \beta} Pf\circ Ph \simeq P(fh)$, then there is a 2-cell $\widehat{\beta} \colon g \Rightarrow h$ in $\cc{E}$ such that  $P\widehat{\beta} = \beta$, $f \star \widehat{\beta} = \alpha$.
    
    \item
    If $\alpha, \beta\colon g \Rightarrow h\colon Z \to X$ are 2-cells in $\cc{E}$, then the equalities $P\alpha = P\beta$ and $f \star \alpha = f \star \beta$ together imply $\alpha = \beta$. In particular, this implies that the 2-cell $\hat{\beta}$ in item $1$ is unique.
    \qed
\end{enumerate} 
\end{lemma}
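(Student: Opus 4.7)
The plan is to unpack the statement that the comparison functor in \eqref{eq:from-Buckley-312} is an equivalence of categories into its pointwise characterization as essentially surjective, full, and faithful. Since these three conditions jointly characterize equivalences in $\Cat$, it suffices to show that they correspond respectively to the three conditions $0$, $1$, and $2$ of the lemma.

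Using the explicit description of pseudo-pullbacks in $\Cat$ recalled just before the statement, an object of $\mathcal{E}(Z,Y) \times_{\mathcal{B}(PZ,PY)} \mathcal{B}(PZ,PX)$ is a triple $(g,h,\alpha)$ consisting of an arrow $g\colon Z\to Y$ in $\mathcal{E}$, an arrow $h\colon PZ\to PX$ in $\mathcal{B}$, and an invertible 2-cell $\alpha$ relating $Pg$ and $Pf\circ h$; a morphism between two such triples is a pair of 2-cells whose images under $P_{Z,Y}$ and $(Pf)_*$ make the evident square with the corresponding $\alpha$'s commute. The comparison functor sends $k\colon Z \to X$ to the triple $(fk, Pk, P^2_{f,k})$ and sends $\delta\colon k\Rightarrow k'$ to the pair $(f \star \delta, P\delta)$.

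For essential surjectivity, given $(g,h,\alpha)$, producing $\hat{h}\colon Z\to X$ together with an isomorphism from $(f\hat{h}, P\hat{h}, P^2_{f,\hat{h}})$ to $(g,h,\alpha)$ amounts, by the above description of morphisms, to giving invertible 2-cells $\hat{\alpha}\colon f\hat{h} \simeq g$ and $\hat{\beta}\colon P\hat{h} \simeq h$ satisfying precisely the coherence relation stated in condition $0$. For fullness and faithfulness applied to a parallel pair $g,h\colon Z\to X$ in $\mathcal{E}$, a morphism in the pseudo-pullback between the images of $g$ and $h$ is a pair $(\alpha,\beta)$ of 2-cells whose compatibility condition, rewritten using the invertibility of the constraints $P^2_{f,-}$, becomes exactly the hypothesis on $P\alpha$ in condition $1$. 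Fullness of the comparison functor then yields the lifted 2-cell $\widehat{\beta}$ required by $1$, while faithfulness is precisely $2$ and simultaneously accounts for the uniqueness clause at the end of $1$.

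The main subtlety in executing this plan is carefully tracking the placements and orientations of the functoriality constraints $P^2_{f,-}$ in the coherence squares for the pseudo-pullback, to verify that the extracted equations match the statements of $0$ and $1$ verbatim; beyond this bookkeeping, the argument is a direct translation of the three pointwise properties of the comparison functor.
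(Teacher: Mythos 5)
Your proposal is correct and matches the paper exactly: the paper gives this lemma with no written proof, asserting it follows by writing out what it means for the comparison functor \eqref{eq:from-Buckley-312} into the pseudo-pullback to be essentially surjective, full, and faithful, which is precisely the translation you carry out (item 0 $\leftrightarrow$ essential surjectivity, item 1 $\leftrightarrow$ fullness, item 2 $\leftrightarrow$ faithfulness). The only care needed is the bookkeeping of the constraints $P^2_{f,-}$ and the orientation of $\alpha$, which you correctly identify.
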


We begin by introducing a notion of 1-fibration, which is weaker than the notion of fibration introduced in \cite[Def. 3.1.5]{Buc14} and does not involve Cartesian 2-cells (see Definition \ref{def:fibration}).
This is done here mainly for a matter of convenience, because we noticed that Cartesian 2-cells are not needed for proving the Lifting Fractions Lemma \ref{lem:1-fibrationsliftfractions}.
In addition, we mention that some preliminary computations seem to indicate that 1-fibrations could be related to a kind of lax homomorphism into $\Bicat$, with an equivalence given by a Grothendieck construction, just like fibrations are related to trihomomorphisms in \cite{Buc14}.

\begin{definition}[1-Fibration]\label{def:1-fibration}
We say that $P$ is a \emph{1-fibration} if for any $E \colon \E$ and $f \colon B \to PE$, there exists an object $\hat{B}\colon \cc{B}$ and a Cartesian arrow $\hat{f} \colon \hat{B} \to E$ with $P\hat{B} = B$ and $P\hat{f} = f$:
\[
    \vcenter{\xymatrix{
        \hat{B} \ar[r]^{\hat{f}}
        & E
    }}
    \quad\quad \textrm{\rotatebox{180}{$\leadsto$}} \quad\quad
    \vcenter{\xymatrix{
        B \ar[r]^f
        & PE
    }}
\]
We refer to $\hat{f}$ as a \emph{Cartesian lift} of $f$ (at $E$).
\end{definition}
\noindent It is natural to ask in Definition \ref{def:1-fibration} for an invertible 2-cell $P\hat{f} \simeq f$ instead of the equality (see \cite[Rem. 3.1.6]{Buc14}), or even for an equivalence $P\hat{B} \simeq B$ and an invertible 2-cell filling the appropriate triangle.
A careful check of the proof of Lemma \ref{lem:1-fibrationsliftfractions} shows that the results still hold under those hypotheses.

We recall now from {\cite[3.1.4, 3.1.5]{Buc14}} what it means for the pseudo-functor $P \colon \E \to \B$ to be a fibration.

\begin{definition}\label{def:cartesian-2-cell}
We say that $\alpha\colon f \Rightarrow g \colon X \to Y \colon \cc{E}$ is Cartesian if it is Cartesian for the local functor $P_{X,Y}\colon \cc{E}(X,Y) \to \cc{B}(PX,PY)$. We denote by $\mathcal{C}_2$ the family of Cartesian 2-cells of $P$.
\end{definition}

\begin{definition}\label{def:fibration}
We say that $P$ is a \emph{fibration} when it is a locally fibred 1-fibration such that the horizontal composition of Cartesian 2-cells is Cartesian.
\end{definition}

The following are some basic properties of Cartesian arrows that hold when $P$ is a fibration (see {\cite[3.1.8 to 3.1.12]{Buc14}}) and that we will be using in this paper:

\begin{proposition}\label{prop:basicpropCartarrows}
Equivalences are Cartesian, and Cartesian arrows are closed both under composition and under invertible 2-cells.
\end{proposition}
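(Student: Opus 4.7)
The plan is to verify each of the three claims by checking the conditions in Lemma~\ref{lem:cartesian-arrow}. None of the three statements actually requires the full strength of $P$ being a fibration; they follow from $P$ being a pseudo-functor together with the itemized description of Cartesian arrows.

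For the first claim, suppose $f\colon X \simeq Y\colon \cc{E}$ is an equivalence, with quasi-inverse $\bar{f}\colon Y \to X$ and invertible 2-cells $\eta\colon 1_X \simeq \bar{f}f$, $\varepsilon\colon f\bar{f} \simeq 1_Y$. Any pseudo-functor preserves equivalences, so $Pf$ is an equivalence in $\cc{B}$ with quasi-inverse $P\bar{f}$, obtained by pasting $P\eta$ and $P\varepsilon$ with $P^2$ and $P^0$. To verify item~0 of Lemma~\ref{lem:cartesian-arrow}, given a factorization datum $(g,h,\alpha)$ with $\alpha\colon Pf\circ h \simeq Pg$, set $\hat{h} \coloneqq \bar{f}g$: then $\hat{\alpha}\colon f\hat{h}\simeq g$ is induced by $\varepsilon\star g$ and an associator, while $\hat{\beta}\colon P\hat{h}\simeq h$ is assembled from $P^2_{\bar{f},g}$, $\alpha$, and the inverse of the base-side unit of $Pf$. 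The required compatibility equation reduces to a triangle identity for the equivalence $Pf$. Items~1 and~2 follow because whiskering with an equivalence is itself an equivalence of hom-categories, hence fully faithful on 2-cells.

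For closure under composition, let $f\colon X\to Y$ and $g\colon Y\to Z$ both be Cartesian. Given $(k,h,\alpha)$ against $gf$ with $\alpha\colon P(gf)\circ h \simeq Pk$, first use $P^2_{f,g}$ to present $\alpha$ as an isomorphism $Pg\circ (Pf\circ h) \simeq Pk$, and apply the Cartesian property of $g$ to produce $\hat{k}\colon W\to Y$ with $g\hat{k}\simeq k$ and $P\hat{k}\simeq Pf\circ h$. Next apply the Cartesian property of $f$ to the factorization datum determined by $\hat{k}$ and $h$, obtaining $\hat{h}\colon W\to X$ with $f\hat{h}\simeq \hat{k}$ and $P\hat{h}\simeq h$. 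Pasting the two lifts and absorbing an associator yields $(gf)\hat{h}\simeq k$; the compatibility of $(\hat{\alpha},\hat{\beta})$ with $\alpha$ follows by stacking the two individual compatibilities alongside $P^2_{f,g}$. Items~1 and~2 are handled by the same two-stage strategy: apply the fullness and faithfulness conditions for $g$ to reduce to a condition after whiskering with $f$, then apply those for $f$.

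Finally, for closure under invertible 2-cells, let $f$ be Cartesian and $\varphi\colon f\simeq f'$ invertible. Any factorization datum against $f'$ converts to one against $f$ by pre-composing $\alpha$ with $(P\varphi^{-1})\star h$; a lift $(\hat{h},\hat{\alpha},\hat{\beta})$ through $f$ gives a lift through $f'$ by whiskering $\hat{\alpha}$ with $\varphi$, and fullness and faithfulness transport identically. The main obstacle across all three proofs is coherence bookkeeping: item~0 of Lemma~\ref{lem:cartesian-arrow} involves an explicit equation tying $\hat{\alpha}$, $\hat{\beta}$, $\alpha$, and $P^2$, and verifying this equation after the two-stage lift in the composition case requires a careful pasting computation with associators; however, it introduces no new idea beyond the structural arguments above.
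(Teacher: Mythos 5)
Your proposal is correct in outline. Note first that the paper does not actually prove this proposition: it is quoted from \cite[3.1.8 to 3.1.12]{Buc14}, so there is no in-text argument to compare against; your direct verification via the three items of Lemma \ref{lem:cartesian-arrow} is essentially the standard one found in Buckley, and your observation that none of it uses the fibration hypothesis on $P$ (only that $P$ is a pseudo-functor and that Cartesianness is the bipullback condition \eqref{eq:cartesian-arrow}) is accurate. Two small caveats. First, in the equivalence case your verification of item 0 "reduces to a triangle identity for the equivalence $Pf$", but an arbitrary choice of $\eta$ and $\varepsilon$ need not satisfy the triangle identities; you should either promote $f$ to an adjoint equivalence first (always possible by modifying $\varepsilon$), or sidestep the issue entirely by noting that when $f$ is an equivalence both horizontal functors $f_*$ and $(Pf)_*$ in \eqref{eq:cartesian-arrow} are equivalences of categories, so the square is automatically a bipullback. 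Second, and in the same spirit, all three claims admit a uniform and computation-free proof at the level of \eqref{eq:cartesian-arrow} rather than its itemized unpacking: composition is the horizontal pasting lemma for bipullbacks applied to the squares for $f$ and $g$ (the pasted 2-cell agreeing with $P^2_{gf,-}$ by the coherence axiom of $P$), and closure under invertible 2-cells holds because $\varphi\colon f\simeq f'$ induces an equivalence between the two comparison functors into the pseudo-pullback. Your two-stage item-by-item argument is correct but pushes all of this into the "coherence bookkeeping" you defer; the bookkeeping is genuinely the entire content of item 0 in the composition case, so if this were written out in full it is the part that would need to be done carefully.
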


\begin{proposition}\label{prop:onesided2for3Cartarrows}
Given composable arrows $f$ and $g$, if $g$ and $gf$ are Cartesian then so is $f$.
\end{proposition}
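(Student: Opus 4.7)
The plan is to use the equivalent bipullback reformulation of the Cartesian condition stated in \eqref{eq:from-Buckley-312}: an arrow is Cartesian iff the square \eqref{eq:cartesian-arrow} is a bipullback in $\Cat$ for every choice of domain. Fixing $W \colon \cc{E}$, I would paste the squares associated to $f$ and to $g$ horizontally,
\[
\xymatrix{
    \cc{E}(W, X) \ar[r]^{f_*} \ar[d]_{P_{W,X}} & \cc{E}(W, Y) \ar[r]^{g_*} \ar[d]_{P_{W,Y}} & \cc{E}(W, Z) \ar[d]^{P_{W,Z}} \\
    \cc{B}(PW, PX) \ar[r]_{(Pf)_*} & \cc{B}(PW, PY) \ar[r]_{(Pg)_*} & \cc{B}(PW, PZ)
}
\]
and identify this composite, via the associator of $\cc{E}$ and the pseudo-functoriality constraint $P^2_{g,f}$, with the single square for $gf$. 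Since the right square (for $g$) and the outer square (for $gf$) are bipullbacks by hypothesis, the pasting lemma for bipullbacks in $\Cat$ forces the left square (for $f$) to be a bipullback as well; as $W$ was arbitrary, $f$ is Cartesian.

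The main obstacle I expect is a coherence bookkeeping step: one must verify that the invertible 2-cell filling the horizontally pasted square agrees, under the identifications above, with the canonical 2-cell $P^2_{gf,-}$ filling the $gf$-square. This amounts to a routine unwinding using the lax associativity axiom of Definition \ref{def:pseudo-functor} together with naturality of the associators, but it has to be done carefully.

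An alternative more hands-on approach is to verify the three conditions of Lemma \ref{lem:cartesian-arrow} for $f$ directly. Condition 2 is immediate from condition 2 for $gf$ via $(gf)\star \alpha = g \star (f \star \alpha)$. For condition 1, given $\alpha \colon fa \Rightarrow fb$ and $\beta \colon Pa \Rightarrow Pb$ as in the statement, apply condition 1 for $gf$ to $g \star \alpha$ and $\beta$ to obtain a lift $\widehat{\beta}$, and then use condition 2 for $g$ to verify $f \star \widehat{\beta} = \alpha$. For condition 0, apply condition 0 for $gf$ to $gh$ and $k$ to obtain $\hat{k} \colon W \to X$ together with an invertible $\hat{\alpha} \colon (gf)\hat{k} \simeq gh$ and $\hat{\gamma} \colon P\hat{k} \simeq k$; then apply condition 1 for $g$ both to $\hat{\alpha}$ and to its inverse to descend $\hat{\alpha}$ to an invertible 2-cell $f\hat{k} \simeq h$, invertibility being confirmed by checking that the two composites are identities via condition 2 for $g$.
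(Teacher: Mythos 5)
Your argument is correct. Note that the paper does not prove this statement itself; it defers to \cite[3.1.8--3.1.12]{Buc14}, where the argument is essentially your first route: the square \eqref{eq:cartesian-arrow} for $gf$ is, up to the invertible cell supplied by lax associativity of $P$ and naturality of the associators, the horizontal pasting of the squares for $f$ and for $g$, and the one-sided pasting lemma for bipullbacks (outer and right bipullbacks imply left bipullback) gives the claim. Your second, hands-on verification via Lemma \ref{lem:cartesian-arrow} also goes through and is worth recording as the more elementary alternative: condition 2 for $f$ indeed needs only condition 2 for $gf$ (whisker with $g$ and reassociate); condition 1 for $f$ follows by lifting $g\star\alpha$ through $gf$ and then using condition 2 for $g$ to identify $f\star\widehat{\beta}$ with $\alpha$ (the needed equality $P(f\star\widehat{\beta})=P\alpha$ comes from $P\widehat{\beta}=\beta$ and naturality of $P^2$); and condition 0 for $f$ is obtained by factoring $gh$ through $gf$ and then descending the resulting invertible cell $(gf)\hat{k}\simeq gh$ along $g$ via condition 1 for $g$, with invertibility of the descended cell checked by condition 2 for $g$ exactly as you describe. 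The only cost of the elementary route is the coherence bookkeeping you already flag; the bipullback route buys a shorter proof at the price of invoking the pasting lemma for bipullbacks.
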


\begin{proposition}\label{prop:cartoverequivisequiv}
If $f$ is Cartesian and $Pf$ is an equivalence, then so is $f$.
\end{proposition}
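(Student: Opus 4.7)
The plan is to explicitly build a pseudo-inverse of $f$ by lifting a pseudo-inverse of $Pf$ through the Cartesian arrow $f$, and then to verify the remaining triangle identity using the local fully faithful part of the Cartesian universal property.

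First, fix a pseudo-inverse $h \colon PY \to PX$ of $Pf$ together with invertible 2-cells $\epsilon \colon Pf \circ h \simeq 1_{PY}$ and $\eta \colon 1_{PX} \simeq h \circ Pf$. Combining $\epsilon$ with the unity constraint $P^0_Y$ yields an invertible 2-cell $\alpha \colon Pf \circ h \simeq P(1_Y)$. Applying the factorization property (item $0$ of Lemma \ref{lem:cartesian-arrow}) to $g = 1_Y$, $h$, and $\alpha$, one produces an arrow $\hat{h} \colon Y \to X$ together with invertible 2-cells $\hat{\alpha} \colon f\hat{h} \simeq 1_Y$ and $\hat{\beta} \colon P\hat{h} \simeq h$ satisfying the coherence equation stated in the lemma. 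The arrow $\hat{h}$ is the candidate pseudo-inverse, and $\hat{\alpha}$ already supplies one of the two triangles.

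It remains to exhibit an invertible 2-cell $\hat{h} f \simeq 1_X$. Above, whiskering $\hat{\alpha}$ with $f$ and using the unitor gives an invertible 2-cell
\[
    f(\hat{h} f) \;\simeq\; (f\hat{h})f \;\Mreq{\hat{\alpha} \star f}\; 1_Y\, f \;\simeq\; f \;\simeq\; f\, 1_X.
\]
Below, combining $\hat{\beta}$ with $\eta^{-1}$ and the constraints of $P$ gives an invertible 2-cell
\[
    P(\hat{h} f) \;\Mreq{(P^2_{\hat{h},f})^{-1}}\; P\hat{h} \circ Pf \;\Mreq{\hat{\beta} \star Pf}\; h \circ Pf \;\Mreq{\eta^{-1}}\; 1_{PX} \;\Mreq{P^0_X}\; P(1_X).
\]
By items $1$ and $2$ of Lemma \ref{lem:cartesian-arrow} (equivalently, local fullness and faithfulness of the comparison functor \eqref{eq:from-Buckley-312}), once one checks that these two 2-cells are compatible in the sense of item $1$, they lift uniquely to a 2-cell $\hat{h} f \Rightarrow 1_X$. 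Running the same argument starting from the inverses of the above 2-cells produces a two-sided inverse, so the lifted 2-cell is invertible.

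The main technical obstacle is verifying the compatibility hypothesis of item $1$ between the two explicit 2-cells above. This is a coherence diagram chase that combines the defining equation $\alpha \circ (Pf\star\hat{\beta}) = P\hat{\alpha} \circ P^2_{h,f}$ of the lift $(\hat{h}, \hat{\alpha}, \hat{\beta})$, the lax unity and lax associativity axioms for $P$, and the triangle identities for the equivalence datum $(h, \eta, \epsilon)$; nothing deeper happens beyond this diagram chase.
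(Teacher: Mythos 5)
The paper does not prove this proposition itself: Propositions \ref{prop:basicpropCartarrows}--\ref{prop:cartoverequivisequiv} are quoted from \cite[3.1.8 to 3.1.12]{Buc14}. Your argument is the standard one and is essentially correct: you lift a pseudo-inverse $h$ of $Pf$ through the Cartesian arrow $f$ via item $0$ of Lemma \ref{lem:cartesian-arrow} to obtain $\hat h$ with $f\hat h \simeq 1_Y$, and then use items $1$ and $2$ (full faithfulness of the comparison functor \eqref{eq:from-Buckley-312}) to produce and invert the remaining 2-cell $\hat h f \Rightarrow 1_X$ from its prescribed images upstairs and downstairs; the final step of lifting the inverse 2-cells and cancelling via item $2$ correctly shows invertibility.

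One point must be made explicit, because as written the verification would fail. You invoke ``the triangle identities for the equivalence datum $(h,\eta,\epsilon)$'', but an arbitrary pair of invertible 2-cells witnessing that $h$ is a pseudo-inverse of $Pf$ does \emph{not} satisfy them; you must first replace $(h,\eta,\epsilon)$ by an adjoint equivalence (always possible by the standard modification of $\eta$). This is not cosmetic: in the strict case, substituting the defining equation $\epsilon \circ (Pf\star\hat\beta) = P\hat\alpha$ of the lift into the compatibility hypothesis of item $1$ reduces that hypothesis, after cancelling the common invertible factor $Pf\star\hat\beta\star Pf$, exactly to the equality $\epsilon\star Pf = Pf\star\eta^{-1}$, i.e.\ to the triangle identity $(\epsilon\star Pf)\circ(Pf\star\eta)=1_{Pf}$. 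So the diagram chase you defer to genuinely depends on having chosen an adjoint equivalence; with that adjustment the proof is complete.
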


\section{Filtered and Pseudofiltered Bicategories}\label{sec:filtered}
In this section we consider the filtered axioms for a 2-category that can be found in \cite{Ken92} and the pseudofiltered axioms 
introduced in \cite{DS06} and corrected in \cite{DS21}.
We give here the definitions of filtered (\ref{def:filtered}) and pseudofiltered (\ref{def:pseudofiltered}) bicategories in two equivalent ways: as a set of axioms and as a general statement about existence of pseudo-cocones.
By showing that the axioms introduced here are equivalent to the ones in the literature, it follows from our work that some of the requirements can be dropped from these axioms (see Lemma \ref{lem:1-flt-invertible-for-free} and Proposition \ref{prop:literature-pseudofiltered}).

We begin by recalling the definition of pseudo-cocone of a diagram in a bicategory (a version for 2-categories and 2-functors can be found, for example, in \cite[p.13]{DS21}).

\begin{definition}[Pseudo-Cocones]\label{def:pseudo-cocone}
By a \emph{diagram} (in $\cc{B}$) we mean a pseudo-functor $F\colon \mathcal{C} \to \mathcal{B}$, where $\mathcal{C}$ is a bicategory.
The diagram is said to be \emph{finite} when $\mathcal{C}$ is (that is, it has finitely many objects, arrows, and 2-cells). 
A \emph{pseudo-cocone} of a diagram, with apex $E \colon \mathcal{B}$ is given, as usual, by a pseudo-natural transformation $\theta\colon F \Rightarrow \Delta E$ into the constant pseudo-functor.
As such, it is given by two families of arrows and invertible 2-cells - $\{FA  \xr{\theta_A} E\}_{A\colon \cc{C}}$ and \mbox{$\{\theta_{B} Ff \Mreq{\theta_f} \theta_A \}_{A\xr{f} B \colon \cc{C}}$ -} satisfying the following three axioms that we recall here for the reader's convenience (note that \emph{a priori}, the codomain of $\theta_f$ should be $1_E\theta_A$ but we can write is as above by using implicitly the left unitor of $\mathcal{B}$):
\begin{enumerate}[align=left, leftmargin=*]
    \itemaxiom[PC0]
    For $A \colon \cc{C}$, we have a unity axiom
    \[
        \vcenter{\xymatrix@R=4em@C=4em{
            FA \ar@/_1pc/[d]_{1_{FA}}
                \ar@/_0.2pc/@{{}{ }{}}[d]|{\Mreq{F^0_A}}
                \ar@/^1pc/[d]|{F1_A}
                \ar@/^1pc/[dr]^{\theta_A}
                \ar@/_0.1pc/@{{}{ }{}}[dr]|(0.6){\Mreq{\theta_{1_A}}} \\
            FA \ar[r]_{\theta_A}
            & E
        }}
        \qquad = \qquad
        \vcenter{\xymatrix@R=4em@C=4em{
            FA \ar@/_1pc/[d]_{1_{FA}}
                \ar@/^1pc/[dr]^{\theta_A}
                \ar@/_1pc/@{{}{ }{}}[dr]|(0.4){\Mreq{r_A}} \\
            FA \ar[r]_{\theta_A}
            & E
        }}
    \]
    \itemaxiom[PC1]
    For $A \mr{f} B \mr{g} C \colon \cc{C}$, we have a functoriality axiom
    \[
        \vcenter{\xymatrix{
            FA \ar@/^1pc/[drrr]^{{\theta}_A} 
                \ar[d]_{Ff}^{\qquad\theta_f\wr\Uparrow}\\
            FB \ar[rrr]^{{\theta}_B}
                \ar[d]_{Fg}^{\qquad\theta_g\wr\Uparrow}
            &&& E\\
            FC \ar@/_1pc/[urrr]_{{\theta}_C}
        }}
        \qquad = \qquad
        \vcenter{\xymatrix{
            FA \ar@/^3pc/[dd]|{Fgf}
                \ar@/^1.5pc/@{{}{ }{}}[dd]|{\Mreq{F^2_{g,f}}}
                \ar@/^1pc/[drrr]^{{\theta}_A}
                \ar[d]_{Ff}\\
            FB \ar@{}[rrr]|(0.6){\theta_{gf}\wr\Uparrow}
                \ar[d]_{Fg}&&& E\\
            FC \ar@/_1pc/[urrr]_{{\theta}_C}
        }}
    \]
    
    \itemaxiom[PC2]
    For $\xymatrix{
        A \ar@{}@<.5pc>[r]|{}="1"
        \ar@/^1pc/[r]^{f}
        \ar@{}@<-.5pc>[r]^{}="2"
        \ar@/_0.7pc/[r]_{g}
        & B
        \ar@{=>}^<<{\gamma}"1";"2"
    }\colon \cc{C}$, we have a vertical naturality axiom
     \[
         \vcenter{\xymatrix{
            FA \ar@<1.4ex>[drr]^{{\theta}_A}
                \ar@/^-1.6ex/[d]^(0.4){F\gamma}^(0.6){\; \Rightarrow}_{Ff}  
                \ar@/^2.3ex/[d]^{Fg}^(0.6){\quad\;\;\theta_g\wr\Uparrow}\\
            FB \ar[rr]_{{\theta}_B}
            && E
        }}
        \qquad = \qquad
        \vcenter{\xymatrix{
            FA \ar[drr]^{{\theta}_A}
                \ar[d]_{Ff}^(0.6){\quad\theta_f\wr\Uparrow}\\
            FB \ar[rr]_{{\theta}_B}
            && E
        }}
    \]
\end{enumerate}
The dual notion is called \emph{pseudo-cone}.
\end{definition}

\subsection{On filtered bicategories}\label{subsec:filtered}
Now that we have explicitly stated the definition of pseudo-cocone, we can define and characterize filtered bicategories as follows:

\begin{definition}[Filtered Bicategory]\label{def:filtered}
A non-empty bicategory $\mathcal{B}$ is said to be \emph{filtered} if every finite diagram has a pseudo-cocone. 
$\mathcal{B}$ is said to be \emph{cofiltered} if $\mathcal{B}^{\textnormal{op}}$ is filtered; that is, if any finite diagram in $\mathcal{B}$ has a pseudo-cone.
\end{definition}

\begin{proposition}[Filtered Bicategory]\label{prop:flt-axioms}
A non-empty bicategory is filtered if and only if it satisfies the following three axioms:
\begin{enumerate}[align=left, leftmargin=*]
    \itemaxiom[0-Flt]
    For any objects $A,B$, there exist an object $C$ and arrows $u\colon A \to C$, $v\colon B \to C$,
    \[\xymatrix@R=0ex{
        A \ar@{.>}[dr]^u \\
        & C \\
        B \ar@{.>}[ur]_v
    }\]
    
    \itemaxiom[1-Flt]
    For any pair of parallel arrows $f,g\colon A \to B$, there exist an object $C$, an arrow $u\colon B \to C$, and a 2-cell $\gamma\colon uf \Rightarrow ug$,
    \[\xymatrix{
        A \ar@/^/[r]^f
            \ar@/_/[r]_g
            \ar@{.>}@<.7pc>@/^1pc/[rr]^{uf}="1"
            \ar@{}@<1.3pc>[rr]^{}="1"
            \ar@{.>}@<-.7pc>@/_1pc/[rr]_{ug}
            \ar@{}@<-1.3pc>[rr]^{}="2"
        & B \ar@{.>}[r]^u
        & C
        \ar@/^1pc/@{:>}^(0.7){\gamma}"1";"2"
    }\]
    
    \itemaxiom[2-Flt]
    For any pair of parallel 2-cells $\alpha,\beta\colon f \Rightarrow g\colon A \to B$, there exist an object $C$ and an arrow $u\colon B \to C$ such that $u \star \alpha = u \star \beta$,
    \[\xymatrix{
        A \ar@{}@<.5pc>[r]|{}="1"
            \ar@/^1pc/[r]^f
            \ar@{}@<-.5pc>[r]^{}="2"
            \ar@/_0.7pc/[r]_g
        & B \ar@{.>}[r]^u
        & C
        & =
        & A \ar@{}@<.5pc>[r]|{}="3"
            \ar@/^1pc/[r]^{f}
            \ar@{}@<-.5pc>[r]^{}="4"
            \ar@/_0.7pc/[r]_{g}
        & B \ar@{.>}[r]^u
        & C
        \ar@{=>}^<<{\alpha}"1";"2"
        \ar@{=>}^<<{\beta}"3";"4"
    }\]
\end{enumerate}

We denote the dual cofiltered axioms with the names $\ref{0-Flt}^{\textnormal{op}}$, $\ref{1-Flt}^{\textnormal{op}}$ and $\ref{2-Flt}^{\textnormal{op}}$.
\end{proposition}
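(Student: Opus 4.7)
The proof establishes both implications of the equivalence.

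For the forward direction, each of the three axioms arises from a pseudo-cocone over a small diagram. Applied to the discrete diagram on two objects $A,B$, the definition of a filtered bicategory immediately yields \refaxioms{0-Flt}{0-Flt}. For \refaxioms{1-Flt}{1-Flt}, consider the diagram with two parallel arrows $f,g\colon A\to B$: a pseudo-cocone provides an apex $C$, arrows $u_A,u_B$, and invertible 2-cells $\theta_f\colon u_B \circ f \simeq u_A$ and $\theta_g\colon u_B \circ g \simeq u_A$ (modulo the implicit unitor), so that $\theta_g^{-1}\circ \theta_f$ is the required 2-cell $u_B\circ f\Rightarrow u_B\circ g$ (in fact invertible). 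For \refaxioms{2-Flt}{2-Flt}, consider the diagram with two parallel 2-cells $\alpha,\beta\colon f\Rightarrow g$: axiom \refaxioms{PC2}{PC2} applied to each of $\alpha$ and $\beta$ gives $\theta_f = \theta_g \circ (u_B\star\alpha) = \theta_g \circ (u_B\star\beta)$, and invertibility of $\theta_g$ yields $u_B\star\alpha = u_B\star\beta$.

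For the backward direction, given a finite diagram $F\colon \mathcal{C}\to\mathcal{B}$, the plan is to build a pseudo-cocone in three stages. First, iterate \refaxioms{0-Flt}{0-Flt} on the finite family $\{FA\}_{A\colon\mathcal{C}}$ to produce a single target $E_0$ together with arrows $s_A\colon FA\to E_0$. Second, for each arrow $f\colon A\to B$ of $\mathcal{C}$ the pair $(s_B\circ Ff,\, s_A)$ is parallel from $FA$ to $E_0$; applying \refaxioms{1-Flt}{1-Flt} successively (whiskering earlier data by each new arrow as it appears) yields a common apex $E$, arrows $\theta_A\colon FA\to E$, and for each $f$ a 2-cell $\theta_f\colon \theta_B\circ Ff \Rightarrow \theta_A$. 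Third, enforce the pseudo-cocone axioms \refaxioms{PC0}{PC0}, \refaxioms{PC1}{PC1}, and \refaxioms{PC2}{PC2}: each amounts to a finite list of equalities of 2-cells in $\mathcal{B}$, and each such equality can be secured, after post-composing with a suitable further arrow, by a single application of \refaxioms{2-Flt}{2-Flt}. Since $\mathcal{C}$ is finite and whiskering preserves any 2-cell equation already established, the finitely many successive applications converge.

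Two subtleties need attention. The 2-cells produced by \refaxioms{1-Flt}{1-Flt} are not a priori invertible, whereas a pseudo-cocone demands invertibility of each $\theta_f$; this is the content of Lemma \ref{lem:1-flt-invertible-for-free}, which under the other two axioms upgrades \refaxioms{1-Flt}{1-Flt} to a version producing an invertible 2-cell, and which is invoked at each step of Stage~2. Moreover, Stage~3 must be organised so that each new post-composition leaves previously enforced equations intact; this is automatic, since whiskering a valid equation of 2-cells by any arrow preserves it.

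The main obstacle is the bookkeeping in Stage~3: one must enforce finitely many instances of the cocone axioms in sequence and verify that each fresh post-composition does not disturb the earlier equations. The construction works precisely because the only operation performed at each step is left-whiskering by a newly introduced arrow, and equalities of 2-cells are stable under whiskering by \emph{any} arrow.
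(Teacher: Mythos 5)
Your proof is correct and follows essentially the same route as the paper: pseudo-cocones over the three small test diagrams for the forward direction, and the three-stage construction (objects via \ref{0-Flt}, arrows via \ref{1-Flt} upgraded by Lemma \ref{lem:1-flt-invertible-for-free}, then the cocone equations via \ref{2-Flt}) for the converse, including the observation that whiskering preserves previously established equalities.
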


We note that these three axioms appear in \cite{Ken92} for the case of 2-categories except that the 2-cell in Axiom \ref{1-Flt} was required to be invertible\footnote{One of our reasons not to keep the names BF0-BF2 from  \cite{Ken92} is to avoid confusion with the names of the axioms for a bicategory of fractions \cite{Pro96}.}. 
Before providing the proof of Proposition \ref{prop:flt-axioms}, we show that this requirement can be omitted in the presence of Axiom \ref{2-Flt}.
A similar situation was observed for an axiom for the right bicalculus of fractions (see \cite[Prop. 2.3]{PS21}).
We mention in passing to the interested reader that we have realized that with the same reasoning one can show that, in \cite[Def. 3.1.2]{DDS18}, the sentence ``If $f \in \Sigma$, we may choose $\alpha$ invertible" can be omitted as well.

\begin{lemma}\label{lem:1-flt-invertible-for-free}
Assuming \ref{2-Flt}, the 2-cell $\gamma$ in Axiom \ref{1-Flt} can be taken to be invertible.
\end{lemma}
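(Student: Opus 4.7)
The plan is to apply \ref{1-Flt} twice in opposite directions, producing a candidate inverse $\delta$ for the desired 2-cell, and then to use \ref{2-Flt} (twice) to turn $\delta$ into a genuine two-sided inverse after suitable whiskering. In more detail:

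First, apply \ref{1-Flt} to the pair $f,g\colon A \to B$ to obtain $u\colon B \to C$ and $\gamma\colon uf \Rightarrow ug$. Then apply \ref{1-Flt} to the swapped pair of parallel arrows $ug,uf\colon A \to C$ to obtain $v\colon C \to D$ and $\delta\colon vug \Rightarrow vuf$. The candidate inverse of $v \star \gamma\colon vuf \Rightarrow vug$ is $\delta$, but the composites $\delta \circ (v\star \gamma)\colon vuf \Rightarrow vuf$ and $(v\star \gamma)\circ \delta\colon vug \Rightarrow vug$ need not be identities.

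To force both composites to become identities, I would use \ref{2-Flt} twice in sequence. Applying \ref{2-Flt} to the parallel 2-cells $\delta \circ (v\star\gamma)$ and $1_{vuf}$ gives an arrow $w_1\colon D \to E_1$ with $w_1 \star \big(\delta \circ (v\star\gamma)\big) = 1_{w_1vuf}$. Applying \ref{2-Flt} again to the parallel 2-cells $w_1 \star \big((v\star\gamma) \circ \delta\big)$ and $1_{w_1vug}$ gives $w_2\colon E_1 \to E_2$ such that $(w_2w_1) \star \big((v\star\gamma)\circ\delta\big) = 1_{w_2w_1vug}$; the first equation is preserved by further whiskering, so both equations hold for $w := w_2w_1$.

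Setting $u' := wvu\colon B \to E_2$ and $\gamma' := wv \star \gamma\colon u'f \Rightarrow u'g$ then yields an invertible 2-cell, with two-sided inverse $w \star \delta$, as desired. The main obstacle is the asymmetry of \ref{2-Flt}: a single application only forces one composite of $v\star\gamma$ and $\delta$ to become an identity, so one has to iterate and verify that the first equation is not broken by the second whiskering, which is automatic since the identity 2-cell is preserved under any further horizontal composition.
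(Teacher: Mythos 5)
Your proof is correct and follows essentially the same route as the paper: two applications of \ref{1-Flt} to produce the 2-cell and its candidate inverse, followed by two applications of \ref{2-Flt} to force both composites to be identities, noting that the first equation survives further whiskering. The only cosmetic difference is that the paper explicitly inserts the associators needed to rebracket composites such as $v(uf)$ versus $(vu)f$, which your write-up suppresses.
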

\begin{proof}
Consider a pair of parallel arrows $f,g\colon A \to B$.
We use Axiom \ref{1-Flt} twice. First we apply this axiom to $f$ and $g$, which yields an arrow $u^1\colon B \to C^1$ and a 2-cell $\gamma^1\colon u^1f \Rightarrow u^1g$.
Then, we apply Axiom \ref{1-Flt} to $u^1g$ and $u^1f$, which yields an arrow $u^2\colon C^1 \to C'$ and, after composing with the associators, a 2-cell $\delta'\colon (u^2u^1)g \Rightarrow (u^2u^1)f$.
Taking $u' = u^2u^1\colon B \to C'$, and defining $\gamma'$ as a composition of $u^2 \star \gamma^1$ with the associators, we have a pair of 2-cells $\gamma'\colon u'f \Rightarrow u'g$ and $\delta'\colon u'g \Rightarrow u'f$.

We then use Axiom \ref{2-Flt}, also twice.
First we apply it to the 2-cells $\delta' \gamma'$, and $1_{u'f}$, which yields an arrow $v^1\colon C' \to C^2$ that co-equifies these 2-cells.
We then apply Axiom \ref{2-Flt} to the 2-cells $v^1 \star (\gamma'\delta')$ and $1_{v^1(u'g)}$, this yields an arrow $v^2\colon C^2 \to C$ that co-equifies these 2-cells.
Taking $v = v^2v^1\colon C' \to C$, it follows that $\gamma = v \star \gamma'$ and $\delta = v \star \delta'$ are the inverses of each other, and taking   $u = v u'$ concludes the proof.
\end{proof}

\begin{proof}[Proof of Proposition \ref{prop:flt-axioms}] 
The same ideas from \cite[Prop. 3.1.5]{DDS18} work, but for the sake of completeness we give this brief proof here.
For the $(\Rightarrow)$ direction, each of the three axioms in Proposition \ref{prop:flt-axioms} follows by considering pseudo-cones of the diagrams
\begin{center}
\addtolength{\tabcolsep}{-1pt} 
    \begin{tabular}{ccc}
        $\left\{\bullet \; \; \bullet\right\}
            \rightarrow
            \left\{A \; \; B \right\}
            \quad$
        & $\quad
            \left\{\xymatrix{
                \bullet \ar@/^/[r]
                    \ar@/_/[r]
                & \bullet
            }\right\}
            \rightarrow
            \big\{\xymatrix{
                A \ar@/^/[r]^f
                    \ar@/_/_g[r]
                & B
            }\big\}
            \quad$
        & $\quad
            \big\{\xymatrix@C=2.7em{
                \bullet \ar@/^0.6pc/[r]^{}="1"
                    \ar@/_0.6pc/[r]^{}="2"
                & \bullet
                \ar@{=>}@/^/"1";"2"
                \ar@{=>}@/_/"1";"2"
            }\big\}
            \rightarrow
            \big\{\xymatrix@C=2.7em{
                A \ar@{}@<0.6pc>[r]^{}="1" 
                    \ar@{}@<-0.6pc>[r]_{}="2" 
                    \ar@/^0.7pc/[r]^{f} 
                    \ar@/_0.7pc/[r]_{g}
                & B
                \ar@{=>}@/^/^{\beta}"1";"2"
                \ar@{=>}@/_/_{\alpha}"1";"2"
            }\big\}$ 
    \end{tabular} 
\addtolength{\tabcolsep}{1pt}
\end{center}
For the opposite direction $(\Leftarrow)$, given a finite pseudo-diagram $F\colon \mathcal{C} \to \mathcal{B}$, we proceed as follows:
\begin{enumerate}[label=$\arabic*.$]\setcounter{enumi}{-1}
    \item
    First, we build a pseudo-cocone on the objects of the diagram; that is, a collection of arrows $FA \mr{\theta_A} E$, one for each $A\colon \cc{C}$, by using \ref{0-Flt} successively. 
    
    \item
    We consider then each arrow of $\cc{C}$.
    Given such an arrow $A \mr{f} B$, we use axiom \ref{1-Flt} with the two parallel arrows $\theta_A$ and $\theta_B Ff$, this gives an arrow $E \mr{u} E'$ and a 2-cell $\theta_f$ (that can be taken invertible by Lemma \ref{lem:1-flt-invertible-for-free}).
    We then ``update'' our cocone by composing it with $u$, in other words we rename $E \coloneqq E'$, $\theta_A \coloneqq u \theta_A$ for all $A$, and $\theta_g \coloneqq u \theta_g$ for any previously constructed 2-cell $\theta_g$.
    After considering all the arrows of the diagram one by one, we obtain a pseudo-cocone on the objects and arrows of the diagram.
    
    \item
    Finally we consider each of the equations coming from the axioms \refaxioms{PC0}{PC0-2} in Definition \ref{def:pseudo-cocone}.
    We proceed in a similar manner to step 1, but using \ref{2-Flt} instead of \ref{1-Flt}, to make our pseudo-cocone satisfy these finitely many equations.
\end{enumerate}
\vspace{-.5cm}
\end{proof}

\begin{remark}[Why not weighted cocones?]\label{rem:why-not-weighted}
The notion of conical cocone in bicategory theory is not sufficient for all applications.
Hence it is a natural question to wonder if one could define other forms of \textit{filteredness} using other, more exhaustive, types of cocones.
Even though this level of generalization might be useful to consider, in this specific case, the conical cocones are enough.
Indeed any conical cocone is also a weighted cocone on the same diagram for any weight, as the conical weight is terminal.
Hence, if we were to define a filtered bicategory with the family of all \textit{finite} weighted diagrams having a cocone, we would get an equivalent definition.
\end{remark}

\subsection{On pseudofiltered bicategories}\label{subsec:pseudofiltered}
A bicategory is said to be connected if it is connected as a graph; that is, for any pair of objects in it, there is a ``\emph{zig-zag}'' (a finite sequence of forward and backward arrows) linking them.
A diagram $\mathcal{C} \to \mathcal{B}$ is said to be connected if $\cc{C}$ is.

\begin{warning}
The use of the prefix ``pseudo" (without hyphen) in the following definition of pseudofiltered bicategory comes from the 1-dimensional case \cite[(SGA 4, Tome 2)]{GSV72}. It weakens the notion of filtered by requiring the diagram to be connected.
This should not be confused with the use of the same prefix (with hyphen) in the word ``pseudo-cocone" in the same definition.
Recall that the notion of pseudo-cocone is given in Definition \ref{def:pseudo-cocone}: this is the usual use of the prefix ``pseudo-" in bicategory theory, as in pseudo-functor and pseudo-natural transformation.
\end{warning}

\begin{definition}[Pseudofiltered Bicategory]\label{def:pseudofiltered}
A non-empty bicategory $\mathcal{B}$ is said to be \emph{pseudofiltered} if every finite \emph{connected} diagram has a pseudo-cocone.
$\mathcal{B}$ is said to be \emph{pseudocofiltered} if $\mathcal{B}^{\textnormal{op}}$ is pseudofiltered.
\end{definition}

Almost by definition, we have

\begin{proposition}[Filtered = Pseudofiltered + Connected]\label{prop:connected}
A non-empty bicategory is filtered if and only if it is pseudofiltered and connected.
\qed
\end{proposition}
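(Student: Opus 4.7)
The plan is to verify both implications by reducing to the axiomatic characterization of filteredness established in Proposition \ref{prop:flt-axioms}.

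For the forward direction, if $\mathcal{B}$ is filtered then every finite \emph{connected} diagram is in particular finite, so admits a pseudo-cocone; hence $\mathcal{B}$ is pseudofiltered. To obtain connectedness, I would apply the filtered property to the discrete two-object diagram $\{A,B\}$ for any pair of objects $A, B \colon \mathcal{B}$; the resulting pseudo-cocone supplies an apex $C$ together with arrows $A \to C \leftarrow B$. This is exactly axiom \ref{0-Flt}, and it shows that any two objects of $\mathcal{B}$ are linked by a length-two zig-zag.

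For the converse, assuming $\mathcal{B}$ is pseudofiltered and connected, I would check axioms \ref{0-Flt}, \ref{1-Flt} and \ref{2-Flt} and then invoke Proposition \ref{prop:flt-axioms}. Axioms \ref{1-Flt} and \ref{2-Flt} concern finite diagrams that are already connected (a pair of parallel arrows, respectively a pair of parallel 2-cells), so pseudofilteredness directly yields a pseudo-cocone, and the required arrow $u$ (and, for \ref{1-Flt}, the 2-cell $\gamma$) are read off in the same way as in the proof of Proposition \ref{prop:flt-axioms}. Axiom \ref{0-Flt} is the only place where connectedness is genuinely used: given $A, B$, connectedness produces a finite zig-zag of arrows in $\mathcal{B}$ from $A$ to $B$, which, regarded as a pseudo-functor from the path-shaped free category on the underlying graph into $\mathcal{B}$, is a finite connected diagram. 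Applying pseudofilteredness to it produces a pseudo-cocone whose apex $C$, together with the legs at the endpoints $A$ and $B$, supplies the cospan demanded by \ref{0-Flt}.

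The argument is essentially formal, and I do not anticipate any real obstacle; the only mildly delicate step is to frame a zig-zag in $\mathcal{B}$ as a finite connected diagram indexed by a bicategory, which is handled by taking the indexing bicategory to be the free (locally discrete) category on the zig-zag's underlying graph.
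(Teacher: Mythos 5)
Your proof is correct, and it is essentially the argument the paper has in mind when it labels the proposition ``almost by definition'' and omits the proof: the forward direction is immediate (plus the discrete two-object diagram for connectedness), and the converse reduces to checking \ref{0-Flt}, \ref{1-Flt}, \ref{2-Flt} via Proposition \ref{prop:flt-axioms}, with connectedness entering only through the zig-zag used for \ref{0-Flt}. Your handling of the zig-zag as a finite connected diagram indexed by the free (locally discrete) category on its underlying graph is exactly the right way to make that step precise.
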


\begin{proposition}[Pseudofiltered Bicategory]\label{prop:pflt-axioms}
A bicategory is pseudofiltered if and only if it satisfies the following three axioms:
\begin{enumerate}[label= $\bullet$,align=left, leftmargin=*]
    \itemaxiom[0-pFlt]
    For any objects $A,B,C$ and arrows $u\colon C \to A$, $v\colon C \to B$, there exist an object $D$ and arrows $r\colon A \to D$, $s\colon B \to D$:
    \[\xymatrix@R=0ex{
        & A \ar@{.>}[dr]^r \\
        C \ar[ru]^{u}
            \ar[rd]_v
        && D \\
        & B \ar@{.>}[ur]_s
    }\]
    
    \itemaxiom[1-pFlt]
    Axiom \ref{1-Flt} in Proposition \ref{prop:flt-axioms}.
    
    \itemaxiom[2-pFlt]
    Axiom \ref{2-Flt} in Proposition \ref{prop:flt-axioms}.
    \qed
\end{enumerate}
\end{proposition}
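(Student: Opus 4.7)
The plan is to mirror the proof of Proposition \ref{prop:flt-axioms} and adjust only what is forced by the extra connectedness hypothesis. For the direction $(\Rightarrow)$, each of \ref{0-pFlt}, \ref{1-pFlt}, and \ref{2-pFlt} follows by applying the definition of pseudofiltered to a small connected diagram in $\mathcal{B}$: the span $A \leftarrow C \rightarrow B$ for \ref{0-pFlt}, a parallel pair of arrows for \ref{1-pFlt}, and a parallel pair of 2-cells for \ref{2-pFlt}. Each of these shape bicategories is connected, so the hypothesis provides a pseudo-cocone whose components furnish the required data exactly as in the filtered case.

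For the direction $(\Leftarrow)$, I would retain the three-step structure used for Proposition \ref{prop:flt-axioms}. Steps 1 and 2 — populating the invertible 2-cells $\theta_f$ for each arrow $f$ of the diagram, and then coequifying to enforce the axioms \ref{PC0}, \ref{PC1}, \ref{PC2} — go through verbatim, because they invoke only \ref{1-Flt}, Lemma \ref{lem:1-flt-invertible-for-free}, and \ref{2-Flt}, all of which are still available here as \ref{1-pFlt} and \ref{2-pFlt}. The substantive modification is in step 0, where one can no longer freely merge two unrelated objects with \ref{0-Flt} and must instead propagate the apex along the arrows of the diagram, using that $\mathcal{C}$ is connected.

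Concretely, for step 0 I would pick an object $A_0 \colon \mathcal{C}$, initialize the apex as $E \coloneqq F A_0$ and set $\theta_{A_0} \coloneqq 1_{FA_0}$. Since $\mathcal{C}$ is connected and finite, fix a spanning tree of its underlying undirected $1$-skeleton and extend the family of $\theta$'s one edge at a time. When processing a new object $B$ adjacent to an already-processed $A$, two cases arise. If the connecting edge is an arrow $f \colon B \to A$ in $\mathcal{C}$, simply set $\theta_B \coloneqq \theta_A \circ Ff$ with no change of apex. If instead the edge is $f \colon A \to B$, then $FA$ carries both $Ff \colon FA \to FB$ and $\theta_A \colon FA \to E$, forming a span to which \ref{0-pFlt} applies; this yields an object $D$ with arrows $r \colon FB \to D$ and $s \colon E \to D$. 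Update the apex by setting $E \coloneqq D$, postcompose every previously defined $\theta_{A'}$ with $s$, and set $\theta_B \coloneqq r$. After finitely many edges this produces a family $\{\theta_A \colon FA \to E\}_{A \colon \mathcal{C}}$, after which steps 1 and 2 complete the pseudo-cocone.

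The main obstacle is the bookkeeping in step 0: one must organize the inductive propagation along a spanning tree so that every invocation of \ref{0-pFlt} is applied to a span that is actually available from the data constructed so far, and one must verify that the successive updates (postcomposition with the maps $s$) do not disturb arrows built at earlier stages. Both points are routine — postcomposition is functorial and commutes with later postcompositions — but they are precisely what distinguishes the pseudofiltered argument from its filtered counterpart. Once step 0 is complete, the remainder of the argument is identical to that of Proposition \ref{prop:flt-axioms}.
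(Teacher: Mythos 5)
Your proof is correct and is essentially the argument the paper intends: the paper marks this proposition \qed{} precisely because it is the evident adaptation of the proof of Proposition \ref{prop:flt-axioms}, with connectedness entering only in step 0, where your spanning-tree propagation via \ref{0-pFlt} replaces the free merging via \ref{0-Flt}. Steps 1 and 2 indeed go through unchanged since \ref{1-pFlt}, \ref{2-pFlt}, and Corollary \ref{coro:1-pflt-invertible-for-free} supply everything needed.
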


As we only changed the names of \ref{1-Flt} and \ref{2-Flt}, Lemma \ref{lem:1-flt-invertible-for-free} still applies here and we have:
\begin{corollary}\label{coro:1-pflt-invertible-for-free}
Assuming \ref{2-pFlt}, the 2-cell $\gamma$ in Axiom \ref{1-pFlt} can be taken to be invertible.
\qed
\end{corollary}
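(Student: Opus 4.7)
The plan is to observe that this corollary is, by design, just a renaming of Lemma \ref{lem:1-flt-invertible-for-free}. Axiom \ref{1-pFlt} is literally declared in Proposition \ref{prop:pflt-axioms} to be Axiom \ref{1-Flt}, and similarly \ref{2-pFlt} is \ref{2-Flt}; so the statement of the corollary is identical to that of the lemma, modulo the labels.

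I would then inspect the proof of Lemma \ref{lem:1-flt-invertible-for-free} and verify that it invokes only \ref{1-Flt} and \ref{2-Flt}, never \ref{0-Flt} (the axiom joining two unrelated objects). Indeed, in that proof one starts from a single pair of parallel arrows $f,g\colon A\to B$, so everything already lives over the object $B$; the construction proceeds by applying \ref{1-Flt} twice (to $f,g$ and then to $u^1g,u^1f$) to produce a roundtrip $u'f \Rightarrow u'g \Rightarrow u'f$, and then applying \ref{2-Flt} twice to coequify the two composites with the corresponding identity 2-cells, which forces the resulting 2-cell $\gamma$ and its partner to be mutually inverse. None of these steps needs the ability to combine disjoint objects, only to equalize parallel arrows and parallel 2-cells.

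Consequently, the proof is simply: rerun the argument of Lemma \ref{lem:1-flt-invertible-for-free} verbatim, substituting \ref{1-pFlt} for \ref{1-Flt} and \ref{2-pFlt} for \ref{2-Flt} at each invocation. There is no real obstacle here; the only thing to double-check is that no hidden appeal to \ref{0-Flt} is present, and a scan of the argument confirms it is not. This matches exactly the remark ``As we only changed the names of \ref{1-Flt} and \ref{2-Flt}, Lemma \ref{lem:1-flt-invertible-for-free} still applies here'' that precedes the corollary statement, which is why the authors mark it with \qed immediately.
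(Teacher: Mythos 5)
Your proposal is correct and matches the paper exactly: the corollary is stated with an immediate \qed precisely because Axioms \ref{1-pFlt} and \ref{2-pFlt} are by definition \ref{1-Flt} and \ref{2-Flt}, and the proof of Lemma \ref{lem:1-flt-invertible-for-free} uses only those two axioms (never \ref{0-Flt}), so it carries over verbatim. Your check that no hidden appeal to \ref{0-Flt} occurs is exactly the point the authors' preceding remark is making.
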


And we also have, with an evident application of \ref{1-pFlt}:
\begin{lemma}\label{lem:0-pflt-commutes-for-free}
Under the asumption of \ref{1-pFlt} and \ref{2-pFlt}, one can add the existence of an (invertible) 2-cell $\gamma\colon ru \Rightarrow sv$ to the consequence of \ref{0-pFlt}.
\qed
\end{lemma}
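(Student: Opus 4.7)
The plan is the ``evident application'' alluded to in the statement. Given $u\colon C \to A$ and $v\colon C \to B$, I would first invoke \ref{0-pFlt} to obtain an object $D_0$ together with arrows $r_0\colon A \to D_0$ and $s_0\colon B \to D_0$. The composites $r_0 u$ and $s_0 v$ are then parallel arrows $C \to D_0$, which puts us in position to apply \ref{1-pFlt}.

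Applying \ref{1-pFlt} to the pair $(r_0 u,\, s_0 v)$ yields an arrow $w\colon D_0 \to D$ together with a 2-cell $\delta\colon w(r_0 u) \Rightarrow w(s_0 v)$. Setting $r \coloneqq w r_0\colon A \to D$ and $s \coloneqq w s_0\colon B \to D$ and conjugating $\delta$ with the associators of $\mathcal{B}$ (that is, pasting with $a\colon (wr_0)u \simeq w(r_0 u)$ on one side and $a^{-1}\colon w(s_0 v) \simeq (ws_0)v$ on the other), we obtain a 2-cell $\gamma\colon r u \Rightarrow s v$. Together with the arrows $r$ and $s$, this completes the diagram of \ref{0-pFlt} with the requested extra 2-cell.

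Finally, since \ref{2-pFlt} is also in force by hypothesis, Corollary \ref{coro:1-pflt-invertible-for-free} applies to the instance of \ref{1-pFlt} used above, so that $\delta$ (and hence $\gamma$) can be chosen to be invertible, as required. There is no substantive obstacle; the only point worth being explicit about is the implicit use of associators when passing between $w(r_0 u)$ and $(w r_0)u = ru$ (and similarly for $sv$), but this is a standard convention throughout the paper.
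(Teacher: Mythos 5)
Your proof is correct and is precisely the ``evident application of \ref{1-pFlt}'' that the paper has in mind (the paper leaves this argument implicit): apply \ref{0-pFlt}, then \ref{1-pFlt} to the resulting parallel composites, conjugate by associators, and invoke Corollary \ref{coro:1-pflt-invertible-for-free} for invertibility. Nothing further is needed.
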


\begin{remark}[A few other definitions and axioms]\label{rem:literature-filteredness}\leavevmode
\begin{enumerate}[label = $\bullet$]
    \item
    In \cite{DS06}, the authors give a definition of a pre-2-filtered 2-category using two axioms \textbf{F1} and \textbf{F2}.
    This is a notion that, while weaker than that of pseudo-2-filtered, still admits a construction of pre-2-filtered pseudo-colimits of categories that is analogous to the construction of (pseudo)filtered colimits of sets.
    The notion of pseudo-2-filtered 2-category is however needed in order to show that these pseudo-colimits commute with finite connected cotensors \cite[Th.~2.4]{DS06}.
    
    \item
    In \cite{DS06} the authors define pseudo-2-filtered 2-categories by strengthening \textbf{F1} to \textbf{FF1}, while keeping \textbf{F2}.
    In  \cite{DS21}, they correct this definition so that \cite[Th.~2.4]{DS06} holds, by adding a third axiom, \textbf{F3}.
    
    \item
    All these axioms can be considered for a bicategory instead of a strict 2-category.
    We outline below how one can show that the notion of pseudofiltered bicategory in Definition \ref{def:pseudofiltered} is equivalent to the one in \cite{DS21}, obtaining along the way that axiom \textbf{F2} can in fact be dropped from this definition.
\end{enumerate}
\end{remark}

\begin{exercise}\leavevmode
\begin{enumerate}
    \item
    Assume that $\mathcal{B}$ is pseudofiltered.
    Show that each of the axioms \textnormal{\textbf{FF1}}, \textnormal{\textbf{F2}}, and \textnormal{\textbf{F3}} given in \cite{DS21} holds in $\mathcal{B}$, by using a pseudo-cocone of a finite connected diagram.
    
    \item
    Show that axiom \textnormal{\textbf{FF1}} immediately implies both \ref{0-pFlt} and \ref{1-pFlt}.
    Note, as mentioned in \cite[Remark in p.240]{DS21}\,\footnote{In private correspondence, E. Dubuc has made the following clarifications to the content of \cite{DS21}: the 2-cell $\eps$ in axiom \textnormal{\textbf{F3}} is required to be invertible, and axiom \textnormal{\textbf{F2}} is not needed for proving the Remark on p.240.} that axiom \textnormal{\textbf{F3}} immediately implies \ref{2-pFlt}.
\end{enumerate}
\end{exercise}

After doing the exercise above, the reader will have shown:
\begin{proposition}
\label{prop:literature-pseudofiltered}
The following are equivalent sets of axioms, each expressing the property that $\mathcal{B}$ is pseudofiltered:
\begin{enumerate}[label = $(\roman*)$]
    \item
    \ref{0-pFlt}, \ref{1-pFlt}, and \ref{2-pFlt} in Proposition  \ref{prop:pflt-axioms}.
    
    \item
    \textnormal{\textbf{FF1}}, \textnormal{\textbf{F2}}, and \textnormal{\textbf{F3}} in \cite{DS21}.
    
    \item
    \textnormal{\textbf{FF1}} and \textnormal{\textbf{F3}} in \cite{DS21}.
    \qed
\end{enumerate}
\end{proposition}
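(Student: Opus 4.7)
The plan is to establish the cyclic chain of implications $(i) \Rightarrow (ii) \Rightarrow (iii) \Rightarrow (i)$, following the two-part exercise posed just above the proposition.

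For $(i) \Rightarrow (ii)$, I would first invoke Proposition~\ref{prop:pflt-axioms} to pass from the three axioms of $(i)$ to the condition that every finite connected diagram in $\mathcal{B}$ admits a pseudo-cocone. Each of the axioms \textnormal{\textbf{FF1}}, \textnormal{\textbf{F2}}, \textnormal{\textbf{F3}} of \cite{DS21} asserts that, given a specific finite connected configuration of objects, arrows, and 2-cells in $\mathcal{B}$, one can extend that configuration by further arrows and 2-cells satisfying certain coherence equations. For each axiom in turn, I would form the small bicategory $\mathcal{C}$ whose shape encodes the given configuration, take $F\colon \mathcal{C} \to \mathcal{B}$ to be the pseudo-functor picking out the given data, and read off the desired extension from the arrow components $\theta_A$ and the invertible 2-cell components $\theta_f$ of a pseudo-cocone of $F$. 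Connectedness of these shape diagrams is precisely what allows Definition~\ref{def:pseudofiltered} to be applied.

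The implication $(ii) \Rightarrow (iii)$ is immediate: one simply drops the axiom \textnormal{\textbf{F2}}.

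For $(iii) \Rightarrow (i)$, the approach is to inspect the statements of the axioms in \cite{DS21} directly. The hypothesis \textnormal{\textbf{FF1}}, being a strengthening of the earlier axiom \textnormal{\textbf{F1}}, immediately supplies the data required by \ref{0-pFlt} and \ref{1-pFlt}, while the axiom \textnormal{\textbf{F3}} (with its 2-cell $\varepsilon$ taken invertible, as noted in the footnote to the exercise) directly subsumes \ref{2-pFlt}. The delicate point, and the main obstacle of the proof, is ensuring that these direct specializations from \textnormal{\textbf{FF1}} and \textnormal{\textbf{F3}} to \ref{0-pFlt}--\ref{2-pFlt} do not implicitly invoke \textnormal{\textbf{F2}}; the clarifications recorded in \cite[Remark, p.~240]{DS21} and in the footnote confirm precisely this, closing the cycle and yielding the equivalence of all three sets of axioms.
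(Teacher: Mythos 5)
Your proposal is correct and follows essentially the same route as the paper, whose proof is exactly the two-part exercise preceding the proposition: $(i)\Rightarrow(ii)$ by converting $(i)$ to pseudofilteredness via Proposition~\ref{prop:pflt-axioms} and taking pseudo-cocones of the finite connected diagrams encoding \textnormal{\textbf{FF1}}, \textnormal{\textbf{F2}}, \textnormal{\textbf{F3}}; $(ii)\Rightarrow(iii)$ by dropping \textnormal{\textbf{F2}}; and $(iii)\Rightarrow(i)$ by reading off \ref{0-pFlt} and \ref{1-pFlt} from \textnormal{\textbf{FF1}} and \ref{2-pFlt} from \textnormal{\textbf{F3}} (with the invertibility of $\eps$ and the independence from \textnormal{\textbf{F2}} supplied by the cited remark and footnote). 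No gaps.
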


\section{Diagrammatic Axioms of Fractions}\label{sec:diagrammatic}
In this section, we introduce a new set of axioms for a right bicalculus of fractions for a family $\cc{W}$ of arrows of a bicategory $\cc{B}$, a notion originally introduced in \cite{Pro96}.
This new set of axioms is then used to prove a series of lemmas, that are important for the rest of this paper, with simpler computations than the ones that would be required with the original axioms.
We show in Proposition \ref{prop:strongequivaxiom} that both sets of axioms are equivalent, so that when these axioms are satisfied the localization of $\cc{B}$ at $\cc{W}$ can be constructed as in \cite{Pro96}.
We begin by recalling the results as can be found in \cite{Pro96}, except that we give the following more general definition of a localization simultaneously at a family of 1-cells and at a family of 2-cells, since this notion will be used in the present paper (see Theorem \ref{thm:tricolimit}).

\begin{definition}[Localization by a family of arrows and 2-cells] \label{def:localizationformodel}
Let $\mathcal{B}$ be a bicategory, $\mathcal{W}_1$ be a family of arrows of $\mathcal{B}$ and $\mathcal{W}_2$ be a family of 2-cells of $\mathcal{B}$.
We say that a pseudo-functor $L\colon \mathcal{B} \to \mathcal{C}$ is a localization of $\mathcal{B}$ with respect to $\mathcal{W}_{1,2}$ if:
\begin{enumerate}[label = $\bullet$]
    \item
    it maps the arrows of $\mathcal{W}_1$ to equivalences and the 2-cells of $\mathcal{W}_2$ to invertible 2-cells, and furthermore,
    
    \item
    for each bicategory $\mathcal{D}$, the precomposition with $L$,
    \[\xymatrix{
        [\cc{C},\cc{D}] \ar[r]^-{L^*}
        & [\cc{B},\cc{D}]_{\cc{W}_{1,2}}
    }\]
    is a biequivalence of bicategories.
\end{enumerate}
Here $[\cc{B},\cc{D}]_{\cc{W}_{1,2}}$ stands for the full sub-bicategory of $[\cc{B},\cc{D}]$ on those pseudo-functors that send the arrows of $\cc{W}_1$ to equivalences and 2-cells of $\mathcal{W}_2$ to invertible 2-cells.
If such a localization exists, it is unique up to biequivalence and pseudo-natural isomorphism.
We usually denote $\mathcal{C}$ by $\mathcal{B}[\cc{W}_{1,2}^{-1}]$ and $L$ by $L_{\cc{W}_{1,2}}$.
Similarly, we define a localization of a bicategory $\mathcal{B}$ by a family of arrows $\mathcal{W}$, using the empty family of 2-cells in the above (or any subfamily of the family of invertible 2-cells).
For simplicity, we then write $[\cc{B},\cc{D}]_{\cc{W}}$, $\mathcal{B}[\cc{W}^{-1}]$ and $L_{\cc{W}}$ omitting the empty family of 2-cells.
\end{definition}

The first time a localization for bicategories was considered is in \cite{Pro96}, where a set of conditions \axiom[BF]{BF1-BF5} is given that allows for a construction of the localization of a family of arrows by a ``right bicalculus of fractions'', generalizing the one in \cite{GZ67}.
We record this result below:

\begin{definition}[Calculus of Fractions]\label{def:fractions}
We say that the pair $(\mathcal{B},\cc{W})$ (or the family $\cc{W}$) admits a \emph{bicalculus of right fractions}, or satisfies \emph{right fractions}, when the conditions \refaxioms{BF}{BF1-BF5} in \cite{Pro96} are satisfied.
We say that $(\mathcal{B},\cc{W})$ satisfies \emph{left fractions} if $(\cc{B}^{\textnormal{op}},\cc{W}^{\textnormal{op}})$ satisfies right fractions, and we say that $\cc{W}$  satisfies \emph{fractions} if it satisfies right or left fractions.
\end{definition}

\begin{theorem}\label{thm:fractions}
Let $\mathcal{B}$ be a bicategory and $\mathcal{W}$ be a family of arrows admitting a bicalculus of fractions.
Then the localization of $\mathcal{B}$ by $\mathcal{W}$ exists.
\qed
\end{theorem}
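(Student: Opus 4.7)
The theorem is the main result of \cite{Pro96}, so my plan would be to reproduce the construction and verification outlined there, which I sketch now at a high level.

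First, I would build the candidate bicategory $\cc{B}[\cc{W}^{-1}]$ explicitly. Its objects are those of $\cc{B}$; a 1-cell from $A$ to $B$ is a \emph{right span} $A \xleftarrow{w} C \xrightarrow{f} B$ with $w \colon \cc{W}$; a 2-cell between two such spans $(w,f)$ and $(w',f')$ is an equivalence class of diagrams consisting of a third object $D$, arrows $u,u' \colon D \to C, C'$ with $wu, w'u' \colon \cc{W}$, and invertible 2-cells $wu \simeq w'u'$, $fu \simeq f'u'$, with the equivalence generated by common refinements along $\cc{W}$. Vertical composition is by choosing a common refinement using axiom \refaxioms{BF}{BF} for 2-cells; horizontal composition $(w',f') \circ (w,f)$ is defined by choosing, via the Ore-type axiom (\refaxioms{BF}{BF3}), a completion square
\[
    \xymatrix@R=1em@C=1em{
        & & D \ar[dl]_{w''} \ar[dr]^{g} & & \\
        & C \ar[dl]_{w} \ar[dr]^{f} & & C' \ar[dl]_{w'} \ar[dr]^{f'} & \\
        A & & B & & E
    }
\]
together with its filling invertible 2-cell, yielding the composite span $(ww'', f'g)$. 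The associators and unitors, as well as independence from the choices of completions, are produced from the axioms and the 2-cell calculus.

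Second, I would define the localization pseudo-functor $L_{\cc{W}} \colon \cc{B} \to \cc{B}[\cc{W}^{-1}]$ as the identity on objects, sending $f \colon A \to B$ to the span $(1_A, f)$ and a 2-cell $\alpha \colon f \Rightarrow g$ to the obvious equivalence class; its functoriality constraint is built from the chosen identity-completion squares. A direct check using the axioms shows that for every $w \colon \cc{W}$ the span $(w, 1)$ is a quasi-inverse of $L_{\cc{W}}(w) = (1, w)$, so $L_{\cc{W}}$ inverts $\cc{W}$ up to equivalence.

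Third, and this is where the bulk of the work lies, I would establish the universal property: for every bicategory $\cc{D}$, the precomposition
\[
    L_{\cc{W}}^* \colon [\cc{B}[\cc{W}^{-1}], \cc{D}] \longrightarrow [\cc{B}, \cc{D}]_{\cc{W}}
\]
is a biequivalence. The strategy, following \cite{Pro96}, is to produce a (pseudo-)inverse by hand: given $F \colon \cc{B} \to \cc{D}$ sending $\cc{W}$ to equivalences, define $\tilde{F}$ on a span $(w, f)$ by picking a quasi-inverse of $Fw$ and setting $\tilde{F}(w, f) := Ff \circ (Fw)^{-1}$, with the natural choice of structural 2-cells. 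The essential surjectivity, local fullness and local faithfulness of $L_{\cc{W}}^*$ then amount to verifying that arrows and 2-cells in the target can be realized, and that the ambiguity in the definition is washed out by the equivalence relation on 2-cells.

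The main obstacle is not any single conceptual step but rather the accumulation of coherence data: one must check numerous pasting equalities to confirm that horizontal composition is well-defined up to the 2-cell equivalence, that the associators and unitors satisfy the pentagon and triangle, and that $\tilde F$ is a pseudo-functor. Each verification is an application of \refaxioms{BF}{BF1-BF5} together with the bicategory coherence theorem, and the whole argument goes through as in \cite[\S 2--4]{Pro96}; the uniqueness of the localization (up to biequivalence and pseudo-natural isomorphism) then follows from the universal property in the standard way.
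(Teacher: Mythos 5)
Your proposal is correct and takes essentially the same approach as the paper: the paper gives no proof of its own here but simply records this as the main result of \cite{Pro96}, whose construction of $\cc{B}[\cc{W}^{-1}]$ and verification of the universal property you have sketched faithfully. The only point worth noting is that the universal property in Definition \ref{def:localizationformodel} is stated slightly differently from Pronk's original formulation; the paper invokes \cite[Remarks 3.7 (1), (2)]{PS21} to identify the two, whereas your sketch argues the paper's version of the universal property directly, which is equally acceptable.
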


Note that in the original \cite{Pro96}, the universal property of the bicategory of fractions is stated slightly differently from the one we give in Definition \ref{def:localizationformodel} above, but in \cite{PS21} both properties are seen to coincide, see \cite[Remarks 3.7 (1), (2)]{PS21}.

\subsection{A new set of diagrammatic axioms of fractions}\label{subsec:FrcAxioms}
We fix throughout this section a class $\cc{W}$ of arrows of a bicategory $\mathcal{B}$ that contains the equivalences (\refaxioms{BF}{BF1}), is stable under composition (\refaxioms{BF}{BF2}), and is closed under invertible 2-cells (\refaxioms{BF}{BF5}).
We use the symbol $\wto$ to denote arrows in $\cc{W}$.
We present now a set of three axioms that can \emph{replace} the remaining axioms {\bf 3} and {\bf 4} from \cite{Pro96} (or from \cite{PS21}), that is the {\em diagrammatic} axioms.

\begin{enumerate}[align=left, leftmargin=*]
    \itemaxiom[0-Frc]
    Given objects $A,B,C$ and arrows $w\colon A \wto B \in \mathcal{W}$, $f\colon C \to B$, there exist an object $D$, arrows $u\colon D \wto C \in \mathcal{W}$, $h\colon D \to A$, and an invertible 2-cell $\alpha\colon fu \simeq wh$
    \[\xymatrix{
        D \ar@{.>}[r]^h
            \ar@{.>}[d]_u|{\circ}
            \ar@{}[dr]|{\Mreq{\alpha}}
        & A\ar[d]^w|{\circ}\\
        C \ar[r]_f
        & B
    }\]
    
    \itemaxiom[1-Frc]
    Given objects $B,C,D$, arrows $w\colon C \wto D \in \mathcal{W}$, $f,g\colon B \to C$, and a 2-cell $\alpha\colon wf \Rightarrow wg$, there exist an object $A$, an arrow $u\colon A \wto B \in \mathcal{W}$, and a 2-cell $\beta\colon fu \Rightarrow gu$ such that $a_{w,g,u} \circ (\alpha \star u) = (w \star \beta) \circ a_{w,f,u}$:
    \[
        \xymatrix{
            B \ar@/^/[r]^f
                \ar@/_/[r]_g
                \ar@<.7pc>@/^1pc/[rr]^{wf}
                \ar@{}@<1.3pc>[rr]^{}="1"
                \ar@<-.7pc>@/_1pc/[rr]_{wg}
                \ar@{}@<-1.3pc>[rr]^{}="2"
            & C \ar[r]^w|{\circ}
            & D
            \ar@/^1pc/@{=>}^(0.7){\alpha}"1";"2"
        }
        \qquad \leadsto \qquad
        \xymatrix{
            A\ar@{.>}[r]^u|{\circ} 
                \ar@{.>}@<.7pc>@/^1pc/[rr]^{fu}
                \ar@{}@<1.3pc>[rr]^{}="1"
                \ar@{.>}@<-.7pc>@/_1pc/[rr]_{gu}
                \ar@{}@<-1.3pc>[rr]^{}="2"
            & B \ar@/^/[r]^f
                \ar@/_/[r]_g
            & C
            \ar@/_1pc/@{:>}_(0.7){\beta}"1";"2"
        }
    \]
    \[\xymatrix{
        A \ar@{.>}[r]^u|{\circ}
        & B \ar@/^1pc/[r]^{wf}
            \ar@{}@<.5pc>[r]|{}="1"
            \ar@/_0.7pc/[r]_{wg}
            \ar@{}@<-.5pc>[r]|{}="2"
        & D
        & =
        & A \ar@{.>}@/^1pc/[r]^{fu}
            \ar@{}@<.5pc>[r]|{}="3"
            \ar@{.>}@/_0.7pc/[r]_{gu}
            \ar@{}@<-.5pc>[r]|{}="4"
        & C \ar[r]^w|{\circ}
        & D
        \ar@{=>}^{\alpha}"1";"2"
        \ar@{:>}^{\beta}"3";"4"
    }\]

    \itemaxiom[2-Frc]
    For any objects $B,C,D$, arrows $w\colon C \wto D \in \mathcal{W}$, $f,g\colon B \to C$, and 2-cells $\alpha, \beta\colon f \Rightarrow g$,  such that $w \star \alpha = w \star \beta$, there exist an object $A$ and an arrow $u\colon A \wto B \in \mathcal{W}$, such that $\alpha \star u = \beta \star u$:
    \[
        \xymatrix{
            B \ar@/^1pc/[r]^f_{}="1"
                \ar@/_1pc/[r]_g^{}="2"
            & C \ar[r]^w|{\circ}
            & D
            \ar@/^/@{=>}^{\alpha}"1";"2"
            \ar@/_/@{=>}_{\beta}"1";"2"
        }
        \qquad \leadsto \qquad
        \xymatrix{
            A \ar@{.>}[r]^u|{\circ}
            & B \ar@/^1pc/[r]^f_{}="1"
                \ar@/_1pc/[r]_g^{}="2"
            & C
            \ar@/^/@{=>}^{\alpha}"1";"2"
            \ar@/_/@{=>}_{\beta}"1";"2"
        }
    \]
\end{enumerate}

\begin{remark} \label{rem:beforeproof}
Note that the axiom \ref{0-Frc} is the same as axiom \refaxioms{BF}{BF3} in \cite{Pro96}.
Also, the content of axiom \ref{2-Frc} is shown to follow from the \ref{BF} set of axioms in \cite[Lemma 2.1]{Tom16} (see also \cite[Lemma 2.5]{PS21}).
Finally, note that axiom \refaxioms{BF}{BF4} has a first part which is exactly the content of axiom \ref{1-Frc}, a statement about the invertibility of $\beta$ that can be omitted (as shown in \cite[Lemma 2.3]{PS21}), and a final part that we recall now for convenience:
\begin{enumerate}
    \item[]
    {\em Furthermore, the collection of triples $(A, u, \beta)$ such that $\alpha \star u = w \star \beta$ as in \ref{1-Frc} satisfies the following property: for any two such triples $(A_1, u_1, \beta_1)$, $(A_2, u_2, \beta_2)$, there exists an object $X\colon\mathcal{B}$, arrows $s\colon X \to A_1$ and $t\colon X \to A_2$, and an invertible 2-cell $\eps\colon u_1 s \simeq u_2 t$ such that $u_1 s$ and $u_2 t$ are in $\cc{W}$ and such that we have the following equality of pastings:}
    \begin{equation}\label{eq:fromBF4}
        \vcenter{\xymatrix{
            \cdot \ar[r]^{t}
                \ar[d]_{s}
                \ar@{}[dr]|{\Mreq{\eps}}
            & \cdot \ar[d]^{u_2}|{\circ}
                \ar[r]|{\circ}^{u_2}
                \ar@{}[dr]|{\Mr{\beta_2}}
            & \cdot \ar[d]^{g}\\
            \cdot \ar[r]_{u_1}|{\circ}
            & \cdot \ar[r]_f
            & \cdot
        }}
        \qquad = \qquad
        \vcenter{\xymatrix{
            \cdot \ar[r]^{t}
                \ar[d]_{s}
                \ar@{}[dr]|{\Mreq{\eps}}
            & \cdot \ar[d]^{u_2}|{\circ}\\
            \cdot \ar[r]_{u_1}|{\circ}
                \ar[d]|{\circ}_{u_1}
                \ar@{}[dr]|{\Mr{\beta_1}}
            & \cdot \ar[d]^g\\
            \cdot \ar[r]_f
            & \cdot
        }}
    \end{equation}
\end{enumerate}
\end{remark}

\begin{proposition}[Equivalence of Diagrammatic Axioms]\label{prop:strongequivaxiom}
Assuming the closure Axioms \refaxioms{BF}{BF1}, \refaxioms{BF}{BF2} and \refaxioms{BF}{BF5}, we have the equivalence
\begin{equation*}
    \left(\refaxioms{BF}{BF3}+\refaxioms{BF}{BF4}\right)
    \iff
    \left(\ref{0-Frc}+\ref{1-Frc}+\ref{2-Frc}\right)
\end{equation*}
\end{proposition}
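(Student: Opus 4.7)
The plan is to treat the two directions separately. The forward implication $(\Rightarrow)$ is essentially a bookkeeping exercise: \ref{0-Frc} is literally $\refaxioms{BF}{BF3}$, \ref{1-Frc} is the ``first part'' of $\refaxioms{BF}{BF4}$ recalled in Remark~\ref{rem:beforeproof}, and \ref{2-Frc} is the content of \cite[Lemma 2.1]{Tom16} (also noted in that Remark). No further argument is required. For the converse $(\Leftarrow)$, once again $\refaxioms{BF}{BF3}$ follows from \ref{0-Frc} tautologically, the ``first part'' of $\refaxioms{BF}{BF4}$ follows from \ref{1-Frc} tautologically, and the ``invertibility of $\beta$'' clause can be omitted by \cite[Lemma 2.3]{PS21}, as noted in the Remark. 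The only nontrivial task is to derive the ``furthermore'' clause of $\refaxioms{BF}{BF4}$, that is, the pasting identity \eqref{eq:fromBF4}: given two triples $(A_i,u_i,\beta_i)$ satisfying $w \star \beta_i = \alpha \star u_i$ (modulo associators), one must construct an apex $X$, arrows $s \colon X \to A_1$, $t \colon X \to A_2$ with $u_1 s, u_2 t \in \cc{W}$, and an invertible $\eps \colon u_1 s \simeq u_2 t$ making \eqref{eq:fromBF4} hold.

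I would proceed in two steps. First, apply \ref{0-Frc} with $w \coloneqq u_1$ and $f \coloneqq u_2$ to obtain an object $X_0$, an arrow $t_0 \colon X_0 \wto A_2$ in $\cc{W}$, an arrow $s_0 \colon X_0 \to A_1$, and an invertible 2-cell $\eps_0 \colon u_1 s_0 \simeq u_2 t_0$. Then $u_2 t_0 \in \cc{W}$ by $\refaxioms{BF}{BF2}$ and $u_1 s_0 \in \cc{W}$ by $\refaxioms{BF}{BF5}$. Second, consider the two parallel 2-cells $\Phi_1, \Phi_2 \colon f u_1 s_0 \Rightarrow g u_2 t_0$ defined by
\begin{equation*}
    \Phi_1 \coloneqq (\beta_2 \star t_0) \circ (f \star \eps_0),
    \qquad
    \Phi_2 \coloneqq (g \star \eps_0) \circ (\beta_1 \star s_0),
\end{equation*}
which are precisely the two pastings of \eqref{eq:fromBF4} with $(s_0, t_0, \eps_0)$ in place of $(s, t, \eps)$. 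Substituting the equations $w \star \beta_i = \alpha \star u_i$ and invoking the interchange law for $\alpha \colon wf \Rightarrow wg$ and $\eps_0$ shows that both $w \star \Phi_1$ and $w \star \Phi_2$ equal the common value $\alpha \star \eps_0$. Axiom \ref{2-Frc} therefore produces an arrow $v \colon X \wto X_0$ in $\cc{W}$ satisfying $\Phi_1 \star v = \Phi_2 \star v$, and setting $s \coloneqq s_0 v$, $t \coloneqq t_0 v$, $\eps \coloneqq \eps_0 \star v$ delivers the required data (with $u_1 s, u_2 t \in \cc{W}$ by $\refaxioms{BF}{BF2}$).

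The main obstacle I anticipate is not the interchange identity itself but the associator bookkeeping around it: the equations $w \star \beta_i = \alpha \star u_i$ from \ref{1-Frc} are only stated modulo $a_{w,f,u_i}$ and $a_{w,g,u_i}$, and the pasting \eqref{eq:fromBF4} implicitly involves further associators for the triple composites $w(f u_1 s_0)$, $(wf)(u_1 s_0)$, and so on. Normalizing these via a coherence argument (or inserting explicit associators at each step) is tedious but routine, and once handled the rest of the proof reduces to the short interchange computation described above.
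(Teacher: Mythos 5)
Your proposal is correct and follows essentially the same route as the paper's proof: reduce to the ``furthermore'' clause of \refaxioms{BF}{BF4} via Remark~\ref{rem:beforeproof}, apply \ref{0-Frc} to $u_1$ and $u_2$, observe via interchange and the equations $w\star\beta_i = \alpha\star u_i$ that both pastings of \eqref{eq:fromBF4} whisker with $w$ to the common value $\alpha\star\eps_0$, equalize them with \ref{2-Frc}, and precompose. The associator bookkeeping you flag is handled in the paper exactly as you suggest, by inserting explicit associators into the definition of $\eps$.
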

\begin{proof}
In view of Remark \ref{rem:beforeproof}, it only remains to show that the final part of the axiom \refaxioms{BF}{BF4} follows from the  \refaxioms{0-Frc}{Frc} set of axioms.
Pick two triples $(a_1, u_1, \beta_1)$, $(a_2, u_2, \beta_2)$ as in \ref{1-Frc}.
Applying first \ref{0-Frc} to $u_1$ and $u_2$, we get 
\[\xymatrix{
    \cdot \ar@{.>}[r]^{t'}
        \ar@{.>}[d]_{s'}
        \ar@{}[dr]|{\Mreq{\eps'}}
    & \cdot \ar[d]^{u_2}|{\circ} \\
    \cdot \ar[r]_{u_1}|{\circ}
    & \cdot
}\]
such that both $u_1 s'$ and $u_2 t'$ are in $\cc{W}$. As we have the following equalities of pasting:
\[
    \vcenter{\xymatrix{
        \cdot \ar[r]^{t'}
            \ar[d]_{s'}
            \ar@{}[dr]|{\Mreq{\eps'}}
        & \cdot \ar[d]^{u_2}|{\circ}
            \ar[r]|{\circ}^{u_2}
            \ar@{}[dr]|{\Mr{\beta_2}}
        & \cdot \ar[d]^{g} \\
        \cdot \ar[r]_{u_1}|{\circ}
        & \cdot \ar[r]_f
        & \cdot \ar[d]|{\circ}^{w} \\
        && \cdot
    }}
    \qquad = \qquad
    \vcenter{\xymatrix{
        \cdot \ar[r]^{t'}
            \ar[d]_{s'}
            \ar@{}[dr]|{\Mreq{\eps'}}
        & \cdot \ar[d]^{u_2}|{\circ} \\
        \cdot \ar[r]_{u_1}|{\circ}
        & \cdot \ar@/_1pc/[dr]_{wf}
            \ar@/^1pc/[dr]^{wg}
            \ar@{}[dr]|{\Mr{\alpha}} \\
        && \cdot
    }}
    \qquad = \qquad
    \vcenter{\xymatrix{
        \cdot \ar[r]^{t'}
            \ar[d]_{s'}
            \ar@{}[dr]|{\Mreq{\eps'}}
        & \cdot \ar[d]^{u_2}|{\circ} \\
        \cdot \ar[r]_{u_1}|{\circ}
            \ar[d]|{\circ}_{u_1}
            \ar@{}[dr]|{\Mr{\beta_1}}
        & \cdot \ar[d]^g \\
        \cdot \ar[r]_f
        & \cdot \ar[r]|{\circ}_{w}
        & \cdot
    }}
\]
we get, by \ref{2-Frc}, that the two 2-cells in \eqref{eq:fromBF4} can be made equal by precomposing with an arrow $u \in \cc{W}$, that is 
\[
    \vcenter{\xymatrix{
        \cdot \ar[d]|{\circ}_{u} \\
        \cdot \ar[r]^{t'}
            \ar[d]_{s'}
            \ar@{}[dr]|{\Mreq{\eps'}}
        & \cdot \ar[d]^{u_2}|{\circ}
            \ar[r]|{\circ}^{u_2}
            \ar@{}[dr]|{\Mr{\beta_2}}
        & \cdot \ar[d]^{g} \\
        \cdot \ar[r]_{u_1}|{\circ}
        & \cdot \ar[r]_f
        & \cdot
    }}
    \qquad = \qquad
    \vcenter{\xymatrix{
        \cdot \ar[r]|{\circ}^u
        & \cdot \ar[r]^{t'}
            \ar[d]_{s'}
            \ar@{}[dr]|{\Mreq{\eps'}}
        & \cdot \ar[d]^{u_2}|{\circ} \\
        & \cdot \ar[r]_{u_1}|{\circ}
            \ar[d]|{\circ}_{u_1}
            \ar@{}[dr]|{\Mr{\beta_1}}
        & \cdot \ar[d]^g \\
        & \cdot \ar[r]_f
        & \cdot
    }}
\]
We then take $s = s'u$, $t = t'u$ and $\eps = a_{u_2,t',u} \circ (\eps' \star u) \circ a^{-1}_{u_1,s',u}$ and the equation we get is precisely the one in \eqref{eq:fromBF4}.
\end{proof}

\subsection{On one- and two-dimensional calculi of fractions}\label{subsec:commutativitypi_0}
We introduce here the following lemma, that will allow us to compute pseudofiltered pseudo-colimits of categories using only the ordinary calculus of fractions from \cite{GZ67}.
We will denote these well-known axioms for right fractions by \axiom{R0} (wide subcategory), \axiom{R1} (Ore condition) and \axiom{R2} (existence of equalizing arrows), we refer to \cite{GZ67} or \cite[\S 3]{Fri11} for details.
We consider $\pi_0$, the left adjoint to the inclusion $d\colon\Cat\hookrightarrow\Bicat$.
We note that \ref{2-Frc} plays no role in the following proof.

\begin{lemma}\label{lem:pi0}
Let $\mathcal{B}$ be a bicategory and $\mathcal{W}$ a family of arrows of $\mathcal{B}$  satisfying right fractions.
Let $\mathcal{W}_0 = [\cc{W}]$ be the family of arrows of $\pi_0\mathcal{B}$ given by the equivalence classes containing arrows of $\mathcal{W}$.
Then $\mathcal{W}_0$ admits a (one-dimensional) calculus of right fractions in the category $\pi_0\mathcal{B}$.
\end{lemma}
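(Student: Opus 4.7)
The plan is to verify the three one-dimensional axioms \ref{R0}, \ref{R1}, \ref{R2} for $\cc{W}_0$ in $\pi_0\cc{B}$ by, in each case, lifting the given data from $\pi_0\cc{B}$ to chosen representatives in $\cc{B}$, applying the corresponding bicategorical fractions axiom from Section \ref{subsec:FrcAxioms}, and projecting back under $\pi_0$. The observation making this possible is that $\pi_0$ identifies parallel arrows connected by an invertible 2-cell, so each isomorphism constraint in $\cc{B}$ collapses to a strict equation in $\pi_0\cc{B}$.

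Axiom \ref{R0} is immediate: identities belong to $\cc{W}_0$ because equivalences lie in $\cc{W}$ by \refaxioms{BF}{BF1}, and $\cc{W}_0$ is closed under composition by \refaxioms{BF}{BF2}. For \ref{R1}, given a cospan $[f]\colon [C]\to [B]$, $[w]\colon [A]\to [B]$ in $\pi_0\cc{B}$ with $[w]\in\cc{W}_0$, I would choose representatives $w\in\cc{W}$ and $f$, apply \ref{0-Frc} to obtain $u\colon D\wto C$, $h\colon D\to A$ and an invertible 2-cell $fu\simeq wh$, and pass to $\pi_0\cc{B}$, where this 2-cell yields the equation $[f][u]=[w][h]$.

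The heart of the proof is \ref{R2}. Suppose $[f],[g]\colon [B]\to [C]$ and $[w]\in\cc{W}_0$ satisfy $[w][f]=[w][g]$; by definition of $\pi_0$ this lifts to an invertible 2-cell $\alpha\colon wf\simeq wg$ in $\cc{B}$. Applying \ref{1-Frc} to $\alpha$ produces $u\colon A\wto B$ in $\cc{W}$ and a 2-cell $\beta\colon fu\Rightarrow gu$ compatible with $\alpha$ via the pasting equation of \ref{1-Frc}. In order to conclude $[f][u]=[g][u]$ in $\pi_0\cc{B}$, $\beta$ must be invertible, and this is the one step in the plan that is not completely automatic. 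I would handle it by invoking the observation recorded in Remark \ref{rem:beforeproof} (and in \cite[Lemma 2.3]{PS21}) that when $\alpha$ is invertible the $\beta$ produced by \ref{1-Frc} can be taken invertible as well.

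It is worth noting that \ref{2-Frc} plays no role in this plan, consistent with the remark preceding the lemma: the entire argument takes place at the level of invertible 2-cells, which is precisely the information visible to $\pi_0$. The axiom \ref{2-Frc}, which concerns equalities of 2-cells rather than invertible 2-cells, would be needed only if we were trying to lift a stricter equality than the one detected by $\pi_0$.
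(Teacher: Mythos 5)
Your treatment of \ref{R0} and \ref{R1} matches the paper's and is fine, but there is a genuine gap in your argument for \ref{R2}: you assert that the hypothesis $[wf]=[wg]$ in $\pi_0\cc{B}$ ``lifts to an invertible 2-cell $\alpha\colon wf\simeq wg$ in $\cc{B}$''. That is not what equality of classes under $\pi_0$ means. Since $\pi_0$ is left adjoint to $d\colon\Cat\hookrightarrow\Bicat$, the hom-sets of $\pi_0\cc{B}$ are the \emph{connected components} of the hom-categories of $\cc{B}$, so $[wf]=[wg]$ only guarantees a zig-zag of (not necessarily invertible, arbitrarily oriented) 2-cells $wf = h_0, h_1, \dots, h_n = wg$, and the intermediate arrows $h_i$ need not factor through $w$ at all. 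A single application of \ref{1-Frc} therefore does not suffice; this is precisely the ``technical step'' to which the paper's proof is devoted. The paper handles it by induction on the length $n$ of the zig-zag: it applies \ref{0-Frc} to $w$ and $h_1$ to produce $v\in\cc{W}$ and an invertible 2-cell $h_1v\simeq wh'$, whiskers the remaining zig-zag by $v$ so that its endpoint $h_1 v$ can be replaced by an arrow of the required form $wh'$, applies the induction hypothesis to that shorter zig-zag and \ref{1-Frc} to the remaining single 2-cell, and finally reconciles the two resulting arrows of $\cc{W}$ with one more application of \ref{0-Frc}.

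A second, smaller point: the step you single out as delicate --- arranging for the $\beta$ produced by \ref{1-Frc} to be invertible --- is actually unnecessary. Since $\pi_0$ identifies any two parallel arrows connected by a 2-cell of either orientation, a not-necessarily-invertible $\beta\colon fu\Rightarrow gu$ already yields $[fu]=[gu]$; this same observation is what allows the paper's induction to work with a zig-zag of arbitrarily oriented 2-cells. Your closing remark that \ref{2-Frc} plays no role is correct and agrees with the paper.
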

\begin{proof}
We will write $[f]\colon x \to y$ for the equivalence class of an arrow $f\colon x \to y$ in $\pi_0\mathcal{B}$.
An arrow of $\mathcal{W}_0$ is, by definition, of the form $[w]$ for some $w \in \mathcal{W}$.
However, the reader will notice that even if $f$ is not in $\mathcal{W}$, it's possible that $[f]$ is in $\mathcal{W}_0$.
$\mathcal{W}_0$ satisfies \ref{R0} as $\cc{W}$ contains identities and is stable under composition.
Also, \ref{0-Frc} implies that $\mathcal{W}_0$ satisfies \ref{R1}. Hence the only technical step is proving \ref{R2}.
Let $f,g\colon b \to c$ and $w\colon c \wto d \in \mathcal{W}$ be arrows of $\mathcal{B}$ such that \[[wf] = [wg]\]
We want to show that there is an arrow $u\colon a \wto b \in \mathcal{W}$ such that $[fu] = [gu]$.
We know that the relation $[wf] = [wg]$ corresponds to the existence of a {\em zig-zag} of 2-cells from $wf$ to $wg$.
More precisely, there is an integer $n\geq 1$, a sequence of parallel arrows $h_0,\dots, h_n\colon b \to d$ with $h_0 = wf$, $h_n = wg$ and a sequence of 2-cells each oriented either as $\alpha_{i}\colon h_{i} \Rightarrow h_{i+1}$ or as $\alpha_{i}\colon h_{i+1} \Rightarrow h_{i}$, for $i\in \{0,\dots,n-1\}$.
If all arrows $h_i$ were of the form $wk_i$, then we could easily conclude by applying \ref{1-Frc} successively, but this is not necessarily the case.
We solve this problem by using \ref{0-Frc} in the following proof by induction on $n$:

If $n=1$, as stated above, we're done by \ref{1-Frc}.
If $n>1$, we apply the axiom \ref{0-Frc} to the arrows $w \in \mathcal{W}$ and $h_1$ to obtain a square,
\begin{equation}\tag{\ref{0-Frc}}\vcenter{\xymatrix{
    p \ar@{.>}[r]^{h^{\prime}}
        \ar@{.>}[d]_v|{\circ}
        \ar@{}[dr]|{\Mreq{\alpha}}
    & c \ar[d]^w|{\circ} \\
    b \ar[r]_{h_1}
    & d
}}\end{equation}
This will allow us, in the rest of this proof, to replace $h_1$ with $h_1v$, which, using the 2-cell above, can itself be seen as being of the form $wh'$ as desired. We do this by whiskering the 2-cells $\alpha_i$ with $v$, for $i\in \{1,\dots,n-1\}$, and we construct in this way a zig-zag of length $n-1$ between $(wg)v$ and $h_1v$.
By adding the invertible 2-cell $h_1v \simeq wh^{\prime}$ on one side and the associator on the other side of this zig-zag, we then get a zig-zag of length $n-1$ between $w(gv)$ and $wh^{\prime}$.
By the induction hypothesis, we have an object $a_1\colon\mathcal{B}$ and an arrow $u_1\colon a_1 \wto p \in \mathcal{W}$ such that $[(gv)u_1] = [h^{\prime}u_1]$:
\begin{equation}\label{eq:A}
    \vcenter{\xymatrix{
        a_1 \ar@{.>}[r]^{u_1}|{\circ}
        & p \ar@/^/[r]|{gv} \ar@/_/[r]|{h^{\prime}}
            \ar@/^1.8pc/[rr]^{w(gv)}|(0.6){}="0"
            \ar@{.}@/^1.3pc/[rr]|(0.58){}="1"
            \ar@{.}@/^0.8pc/[rr]|(0.56){}="2"
            \ar@{.}@/_0.8pc/[rr]|(0.56){}="3"
            \ar@{.}@/_1.3pc/[rr]|(0.58){}="4"
            \ar@/_1.8pc/[rr]_{wh^{\prime}}|(0.6){}="5"
        & c \ar[r]^{w}|{\circ}
        & d
        \ar@{=>}"0";"1"
        \ar@{=>}"2";"1"
        \ar@/_0.1pc/@{.}"2";"3"
        \ar@{=>}"3";"4"
        \ar@{=>}"5";"4"
    }}
    \qquad
    \stackrel{ind. hyp.}{\leadsto}
    \qquad
    [(gv)u_1] = [h^{\prime}u_1]
\end{equation}

We are now only left to deal with the 2-cell $\alpha_0$ and, to do so, we construct first a 2-cell $\gamma$ between $wh'$ and $w(fv)$ using $\alpha_0\star v$ and $\alpha$ as follows.
Note that $\alpha_0$ is oriented in an arbitrary direction, so $\gamma$ can either be $a_{w,f,v} \circ (\alpha_0\star v) \circ \alpha$ or $\alpha^{-1} \circ (\alpha_0\star v) \circ a_{w,f,v}^{-1}$ depending on the direction of $\alpha_0$.
Now by \ref{1-Frc} on $\gamma$ (independently of its direction, which below is drawn downwards but could be upwards), we have an object $a_0\colon\mathcal{B}$ and $u_0\colon a_0 \wto p \in \mathcal{W}$ such that $[(fv)u_0] = [h^{\prime}u_0]$:
\begin{equation}\label{eq:B}
    \vcenter{\xymatrix{
        a_0 \ar@{.>}[r]^{u_0}|{\circ}
        & p \ar@/^/[r]|{h^{\prime}}
            \ar@/_/[r]|{fv}
            \ar@/^1.8pc/[rr]^{wh^{\prime}}
            \ar@{}@<1.5pc>[rr]^{}="1" 
            \ar@/_1.8pc/[rr]_{w(fv)}
            \ar@{}@<-1.5pc>[rr]^{}="2"
        & c \ar[r]^{w}|{\circ}
        & d
        \ar@/^1pc/@{=>}^(0.8){\gamma}"1";"2"
    }}
    \qquad \stackrel{\ref{1-Frc}}{\leadsto} \qquad
    [(fv)u_0] = [h^{\prime}u_0]
\end{equation}
Finally, applying \ref{0-Frc} to $u_0$ and $u_1$, we get an object $a$ and an arrow $u\colon a \wto p \in \mathcal{W}$ that factors, up to invertible 2-cells, through both $u_0$ and $u_1$ and hence such that
\[
    [g(vu)]
    =
    [(gv)u]
    \stackrel{\eqref{eq:A}}{=}
    [h^{\prime}u]
    \stackrel{\eqref{eq:B}}{=}
    [(fv)u]
    =
    [f(vu)]
\]
Noting that $vu \in \mathcal{W}$, this finishes the proof.
\end{proof}

\subsection{Lifting fractions and filtered axioms through fibrations}\label{subsec:generalizingSGA}
We are now ready to \emph{combine the three F's}. We will show in Lemma \ref{lem:1-fibrationsliftfractions} that a family of arrows satisfying right fractions can be lifted through a 1-fibration of bicategories.
We obtain as a corollary that the Cartesian arrows of a fibration over a pseudocofiltered bicategory satisfy fractions, as shown in \cite[(SGA 4, Tome 2)]{GSV72} for categories.

\begin{lemma}[Pseudocofiltered implies Fractions]\label{lem:pseudocofilteredimpliesfractions}
If $\mathcal{B}$ is a pseudocofiltered bicategory as in Definition \ref{def:pseudofiltered}, then the collection of all arrows of $\mathcal{B}$ satisfies right fractions.
\end{lemma}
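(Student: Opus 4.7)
The plan is to reduce the statement to the new diagrammatic axioms using Proposition \ref{prop:strongequivaxiom}, and then verify each of them using the dualized versions of the pseudofiltered axioms from Proposition \ref{prop:pflt-axioms}. Since $\mathcal{W}$ is the collection of all arrows, the closure axioms \refaxioms{BF}{BF1}, \refaxioms{BF}{BF2} and \refaxioms{BF}{BF5} are trivially satisfied, so by Proposition \ref{prop:strongequivaxiom} it suffices to verify \ref{0-Frc}, \ref{1-Frc} and \ref{2-Frc}. The hypothesis that $\mathcal{B}$ is pseudocofiltered tells us that $\mathcal{B}$ satisfies $\ref{0-pFlt}^{\textnormal{op}}$, $\ref{1-pFlt}^{\textnormal{op}}$ and $\ref{2-pFlt}^{\textnormal{op}}$, which concern arrows with a common \emph{codomain} (rather than a common domain) and parallel 2-cells that can be coequified by precomposition (rather than postcomposition).

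For \ref{0-Frc}, given $w\colon A \to B$ and $f\colon C \to B$, I would apply $\ref{0-pFlt}^{\textnormal{op}}$ to the cospan $A \xrightarrow{w} B \xleftarrow{f} C$ to obtain an object $D$ together with $h\colon D \to A$ and $u\colon D \to C$, and then invoke the op-version of Lemma \ref{lem:0-pflt-commutes-for-free} to obtain the required invertible 2-cell $\alpha\colon fu \simeq wh$. Axiom \ref{2-Frc} is immediate: given parallel $\alpha,\beta\colon f \Rightarrow g\colon B \to C$ and any $w$ with $w\star\alpha = w\star\beta$, the axiom $\ref{2-pFlt}^{\textnormal{op}}$ applied directly to $\alpha,\beta$ yields an arrow $u\colon A \to B$ with $\alpha\star u = \beta\star u$ (note we do not even use the hypothesis on $w$).

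The main obstacle is \ref{1-Frc}, where we are given $w\colon C \to D$, $f,g\colon B \to C$ and a 2-cell $\alpha\colon wf \Rightarrow wg$, and we must produce not only $u$ and $\beta\colon fu \Rightarrow gu$, but also ensure the compatibility $w\star\beta = \alpha\star u$ (modulo associators). Applying $\ref{1-pFlt}^{\textnormal{op}}$ to the parallel pair $f,g$ only produces some $u'\colon C' \to B$ together with some $\gamma\colon fu' \Rightarrow gu'$, which a priori has no relation to $\alpha$. To correct this, I would then observe that $w\star\gamma$ and $\alpha\star u'$ are parallel 2-cells $wfu' \Rightarrow wgu'$, and apply $\ref{2-pFlt}^{\textnormal{op}}$ to them to obtain $u''\colon A \to C'$ with $(w\star\gamma)\star u'' = (\alpha\star u')\star u''$. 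Taking $u \coloneqq u'u''$ and $\beta \coloneqq \gamma \star u''$ (transported across the appropriate associators), the pentagon relation with the associators yields the required equation $a_{w,g,u}\circ(\alpha\star u) = (w\star\beta)\circ a_{w,f,u}$, completing the proof.
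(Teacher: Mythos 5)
Your proposal is correct and follows essentially the same route as the paper: closure axioms are trivial for the class of all arrows, \ref{0-Frc} and \ref{2-Frc} come directly from $\ref{0-pFlt}^{\textnormal{op}}$ (with Lemma \ref{lem:0-pflt-commutes-for-free}) and $\ref{2-pFlt}^{\textnormal{op}}$, and for \ref{1-Frc} the paper likewise first applies $\ref{1-pFlt}^{\textnormal{op}}$ to get some $\gamma\colon fu' \Rightarrow gu'$ and then uses $\ref{2-pFlt}^{\textnormal{op}}$ to equify $a_{w,g,u'}\circ(\alpha\star u')$ with $(w\star\gamma)\circ a_{w,f,u'}$. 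The only cosmetic difference is that the paper also notes the alternative of taking a pseudo-cone of the whole connected diagram at once.
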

\begin{proof}
When $\cc{W}$ is the collection of all arrows of $\mathcal{B}$, certainly the closure properties are satisfied.
Also, \ref{0-Frc} is given by $\ref{0-pFlt}^{\textnormal{op}}$ (see Lemma \ref{lem:0-pflt-commutes-for-free}), and \ref{2-Frc} is given by $\ref{2-pFlt}^{\textnormal{op}}$.
To show \ref{1-Frc}, consider the (connected) diagram:
\[\xymatrix{
    B \ar@/^/[r]^f
        \ar@/_/[r]_g
        \ar@<.7pc>@/^1pc/[rr]^{wf}
        \ar@{}@<1.3pc>[rr]^{}="1"
        \ar@<-.7pc>@/_1pc/[rr]_{wg}
        \ar@{}@<-1.3pc>[rr]^{}="2"
    & C \ar[r]^w
    & D
    \ar@/^1pc/@{=>}^(0.7){\alpha}"1";"2"
}\]
The existence of the 2-cell $\beta\colon fu \Rightarrow gu$ such that $a_{w,g,u} \circ (\alpha \star u) = (w \star \beta) \circ a_{w,f,u}$ follows either by considering a pseudo-cone of this diagram, or as follows.
First we apply $\ref{1-pFlt}^{\textnormal{op}}$ to $f$ and $g$, we get thus an arrow $u' \colon A \to B$ and a 2-cell $\gamma\colon f u' \Rightarrow g u'$, and then we apply $\ref{2-pFlt}^{\textnormal{op}}$ to the 2-cells $a_{w,g,u'} \circ (\alpha \star u')$ and $(w \star \gamma) \circ a_{w,f,u'}$.
\end{proof}

\begin{remark}[Conditions for Fractions to imply (Pseudo)cofilteredness]\label{rem:whenfractionsimplycofiltered}
Recall that an object $1$ in a bicategory $\cc{B}$ is called (bi)terminal if it is the bilimit of the empty diagram; that is, if for all $E\colon \cc{B}$ the category $\cc{B}(E,1)$ is a contractible groupoid.
This immediately implies that for any $E\colon \cc{B}$, for any $u \colon E \to 1$ and for any $E'\colon \cc{B}$, we have
\begin{enumerate}[align=left, leftmargin=*]
    \item
    for any pair of parallel arrows $\xymatrix{
    E' \ar@/^/[r]^f
        \ar@/_/[r]_g
    & E
    }$, there is an (invertible) 2-cell $uf \simeq ug$, and
    
    \item
    for any pair of parallel 2-cells $\xymatrix{
    E' \ar@/^1pc/[r]^f_{}="1"
        \ar@/_1pc/[r]_g^{}="2"
    & E
    \ar@/^/@{=>}^{\alpha}"1";"2"
    \ar@/_/@{=>}_{\beta}"1";"2"
    }$, their whiskerings by $u$ are equal.
\end{enumerate}
Assume that $\cc{B}$ has a (bi)terminal object $1$, and let $\cc{W}$ be a family of arrows of $\cc{B}$ admitting a calculus of fractions, such that $\cc{W}$ contains all the arrows into $1$.
Note that in this case $\mathcal{B}$ is connected and hence the notions of cofilteredness and co-pseudofilteredness are equivalent via Proposition \ref{prop:connected}.
Then, using item 1 above, $\ref{1-Flt}^{\textnormal{op}}$ (for $\cc{B}$) follows from \ref{1-Frc} (for $\cc{B}$ and $\cc{W}$), and similarly \ref{2-Frc} implies $\ref{2-Flt}^{\textnormal{op}}$ using item 2.
Finally, under these hypotheses \ref{0-Frc} implies $\ref{0-Flt}^{\textnormal{op}}$ at once, so $\cc{B}$ is cofiltered.

Note that, considering the family $\cc{W}$ of all the arrows of $\cc{B}$, we have in particular that when $\cc{B}$ has a terminal object the converse of Lemma \ref{lem:pseudocofilteredimpliesfractions} holds.
\end{remark}

\begin{lemma}[Lifting Fractions Lemma]\label{lem:1-fibrationsliftfractions}
Let $P\colon \mathcal{E} \to \mathcal{B}$ be a 1-fibration of bicategories (in particular, $P$ could be a fibration).
Let $\mathcal{W}$ admit a calculus of right fractions on $\mathcal{B}$.
Then $\mathcal{C}_\mathcal{W}$ admits a calculus of right fractions on $\mathcal{E}$, where $\mathcal{C}_\mathcal{W}$ is the family of Cartesian arrows over $\mathcal{W}$.
\end{lemma}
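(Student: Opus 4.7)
The plan is to verify, for the family $\mathcal{C}_\mathcal{W}$ of Cartesian arrows over $\mathcal{W}$, the three closure axioms \refaxioms{BF}{BF1}, \refaxioms{BF}{BF2}, \refaxioms{BF}{BF5} together with the three diagrammatic axioms \ref{0-Frc}, \ref{1-Frc}, \ref{2-Frc} introduced in Section \ref{subsec:FrcAxioms}, and then invoke Proposition \ref{prop:strongequivaxiom} to conclude. The closure axioms are immediate: Proposition \ref{prop:basicpropCartarrows} says that Cartesian arrows contain the equivalences and are closed under composition and invertible 2-cells, while the corresponding closure properties for $\mathcal{W}$ transport through the pseudo-functor $P$ (equivalences are sent to equivalences, composites to composites up to invertible 2-cell, and invertible 2-cells to invertible 2-cells).

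For each of the three diagrammatic axioms the strategy follows the same three-step pattern. First, project the given data in $\mathcal{E}$ down to $\mathcal{B}$ via $P$ and apply the corresponding axiom there to obtain a base arrow $u' \in \mathcal{W}$ together with any ancillary 2-cell data. Second, use the 1-fibration hypothesis to lift $u'$ to a Cartesian arrow $u$ in $\mathcal{E}$ with $Pu = u'$, which by construction lies in $\mathcal{C}_\mathcal{W}$. Third, exploit the Cartesian property of the given arrow $w$, as reformulated in Lemma \ref{lem:cartesian-arrow}, to produce the required upstairs data out of the downstairs solution.

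Concretely: for \ref{0-Frc}, once $u$ has been lifted, factor $fu$ through $w$ above the invertible 2-cell $\alpha' \colon Pf \circ Pu \simeq Pw \circ h'$ produced downstairs, using item 0 of Lemma \ref{lem:cartesian-arrow}, to obtain $h \colon D \to A$ and an invertible 2-cell $wh \simeq fu$. For \ref{1-Frc}, the downstairs axiom yields $\beta' \colon Pf \circ Pu \Rightarrow Pg \circ Pu$ compatible with $P\alpha$; the 2-cell $a_{w,g,u} \circ (\alpha \star u) \circ a_{w,f,u}^{-1} \colon w(fu) \Rightarrow w(gu)$ in $\mathcal{E}$, together with $\beta'$ in $\mathcal{B}$, then satisfies the hypothesis of item 1 of Lemma \ref{lem:cartesian-arrow} for $w$, producing a 2-cell $\beta \colon fu \Rightarrow gu$ for which the \ref{1-Frc} equation follows from the item-1 compatibility $w \star \beta = a_{w,g,u} \circ (\alpha \star u) \circ a_{w,f,u}^{-1}$. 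For \ref{2-Frc}, the downstairs axiom produces $u' \in \mathcal{W}$ co-equifying $P\alpha$ and $P\beta$; after lifting to $u$, the 2-cells $\alpha \star u$ and $\beta \star u$ have equal $P$-image and equal whiskering by $w$ (the latter from the hypothesis $w \star \alpha = w \star \beta$), so they agree by the uniqueness clause of Lemma \ref{lem:cartesian-arrow} (item 2).

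The one real obstacle is the coherence bookkeeping in the \ref{1-Frc} step: one must check that, after inserting the appropriate pseudo-functoriality constraints $P^2$ and normalizing via associators, the downstairs equation $a_{Pw,Pg,Pu} \circ (P\alpha \star Pu) = (Pw \star \beta') \circ a_{Pw,Pf,Pu}$ is exactly the compatibility condition required to invoke item 1 of Lemma \ref{lem:cartesian-arrow}. This is precisely where the new formulation of the axioms pays off: the clean statement of \ref{1-Frc} slots directly into the universal property of Cartesian arrows, whereas invoking the original \refaxioms{BF}{BF4} here — with its two-sided coherence datum — would have required substantially more work.
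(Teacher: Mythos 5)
Your proposal is correct and follows essentially the same route as the paper's proof: reduce to the \refaxioms{0-Frc}{Frc} axioms via Proposition \ref{prop:strongequivaxiom}, handle closure with Proposition \ref{prop:basicpropCartarrows}, and for each diagrammatic axiom project down with $P$, lift the resulting $\mathcal{W}$-arrow to a Cartesian arrow, and invoke the corresponding item of Lemma \ref{lem:cartesian-arrow} (including the coherence check with $P^2$ and associators in the \ref{1-Frc} step, which the paper carries out via a cube diagram).
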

\begin{proof} The required closure properties of Cartesian arrows are shown in Proposition \ref{prop:basicpropCartarrows}. 
In view of Proposition \ref{prop:strongequivaxiom}, we can work with the \refaxioms{0-Frc}{Frc} set of axioms instead of \refaxioms{BF}{BF3} and \refaxioms{BF}{BF4}.
Throughout this proof we denote both the arrows in $\cc{W}$ and those in $\cc{C}_{\cc{W}}$ by $\wto$ (but not all Cartesian arrows).
\begin{enumerate}[align=left,leftmargin=*]
    \item[\ref{0-Frc}]
    Consider objects $A,B,C\colon \mathcal{E}$ and arrows $w\colon A \wto B$ in $\mathcal{C_W}$ and $f\colon C \to B$.
    Use first \ref{0-Frc} in $\cc{B}$, to get a diagram of the form
    \[\xymatrix{
        D \ar@{.>}[r]^h
            \ar@{.>}[d]_u|{\circ}
            \ar@{}[dr]|{\Mreq{\alpha}}
        & PA \ar[d]^{Pw}|{\circ} \\
        PC \ar[r]_{Pf}
        & PB
    }\]
    Now choose a Cartesian lift $\hat{u}\colon \hat{D} \to C$ of $u$ (at $C$) and then consider the 2-cell $\gamma = P^{2}_{f,\hat{u}} \circ \alpha^{-1}$ and a lift of $(h,\gamma)$, as in item 0 in Lemma \ref{lem:cartesian-arrow}:
    \[
        \vcenter{\xymatrix@C+15pt@R+15pt{
            \hat{D}
                \ar@/^1em/[dr]^{f\hat{u}}
                \ar@/_2pc/@{.>}[d]_{\hat{h}}
            & \\
            A \ar[r]_{w}|{\circ}
                \ar@{}[ur]|(0.35){\Mreq{\hat{\alpha}}}
            & B
        }}
        \qquad \mapsto \qquad
        \vcenter{\xymatrix@C+15pt@R+15pt{
            D \ar[d]^(0.6){h}
                \ar@/_2pc/@{.>}[d]_{P\hat{h}}
                \ar@{{}{ }{}}@/_1pc/[d]|{\Mreq{\hat{\beta}}}
                \ar@/_1em/[dr]|(0.3){(Pf)u}
                \ar@/^1em/[dr]^(0.8){P(f\hat{u})}
                \ar@{}[dr]|(0.6){\Mreq{P^2_{f,\hat{u}}}}
            & \\
            PA \ar[r]_{Pw}|{\circ}
                \ar@{}[ur]|(0.25){\Mreq{\alpha^{-1}}}
            & PB
        }}  
    \]
    The diagram on the left then shows \ref{0-Frc} as required.

    \item[\ref{1-Frc}]
    Let us consider objects $B,C,D\colon \mathcal{E}$, arrows $w\colon C \wto D$ in $\mathcal{C_W}$, $f,g\colon B \to C$, and a 2-cell $\alpha\colon wf \Rightarrow wg$.
    We define the 2-cell $\widetilde{P\alpha}$ as the composition $PwPf \simeq P(wf) \Mr{P\alpha} P(wg) \simeq PwPg$, where the unnamed isomorphisms are   structural 2-cells of $F$.
    We proceed now in three steps which are outlined in the diagram below
    \[
        \vcenter{\xymatrix{
            B \ar@/^/[r]^f
                \ar@/_/[r]_g 
                \ar@<.7pc>@/^1pc/[rr]^{wf}
                \ar@{}@<1.3pc>[rr]^{}="1"
                \ar@<-.7pc>@/_1pc/[rr]_{wg}
                \ar@{}@<-1.3pc>[rr]^{}="2"
            & C \ar[r]^w|{\circ}
            & D
            \ar@/^1pc/@{=>}^(0.7){\alpha}"1";"2"
        }}
        \quad \stackrel{P}{\mapsto} \quad    
        \vcenter{\xymatrix{
            PB \ar@/^/[r]^{Pf}
                \ar@/_/[r]_{Pg} 
                \ar@<.7pc>@/^1pc/[rr]^{PwPf}
                \ar@{}@<1.3pc>[rr]^{}="1"
                \ar@<-.7pc>@/_1pc/[rr]_{PwPg}
                \ar@{}@<-1.3pc>[rr]^{}="2"
            & PC \ar[r]^{Pw}|{\circ}
            & PD
            \ar@/^1pc/@{=>}^(0.7){\widetilde{P\alpha}}"1";"2"
        }}
        \quad \stackrel{\bf (1)}{\leadsto} \quad
        \vcenter{\xymatrix{
             A \ar@{.>}[r]^u|{\circ} 
                \ar@{.>}@<.7pc>@/^1pc/[rr]^{(Pf)u}
                \ar@{}@<1.3pc>[rr]^{}="1"
                \ar@{.>}@<-.7pc>@/_1pc/[rr]_{(Pg)u}
                \ar@{}@<-1.3pc>[rr]^{}="2"
            & PB \ar@/^/[r]^{Pf}
                \ar@/_/[r]_{Pg}
            & PC
            \ar@/_1pc/@{:>}_(0.7){\widetilde{\beta}}"1";"2"
        }}
    \]
    \[
        \qquad \qquad \qquad \qquad \qquad \qquad
        \stackrel{\bf (2),(3)}{\leadsto}
        \quad
        \vcenter{\xymatrix{
            \hat{A} \ar@{.>}[r]^{\hat{u}}|{\circ} 
                \ar@{.>}@<.7pc>@/^1pc/[rr]^{f\hat{u}}
                \ar@{}@<1.3pc>[rr]^{}="1"
                \ar@{.>}@<-.7pc>@/_1pc/[rr]_{g\hat{u}}
                \ar@{}@<-1.3pc>[rr]^{}="2"
            & B \ar@/^/[r]^f
                \ar@/_/[r]_g
            & B
            \ar@/_1pc/@{:>}_(0.7){\widehat{\beta}}"1";"2"
        }}
    \]   
    
    In step {\bf (1)}, we use \ref{1-Frc} in $\cc{B}$.
    We get then $u$ and $\widetilde{\beta}$ such that the following equation holds:
    \begin{equation}\label{eq:tildePalpha}
        a_{Pw,Pg,u} \circ(\widetilde{P\alpha}\star u) = ((Pw)\star \widetilde{\beta}) \circ a_{Pw,PF,u}
    \end{equation}
    
    In step {\bf (2)}, we take a Cartesian lift $\hat{u}$ of $u$ (at $B$).
    
    In step {\bf (3)}, we apply item 1 in Lemma \ref{lem:cartesian-arrow} to construct $\hat{\beta}$ using: $\textnormal{``}f\textnormal{''} \coloneqq w$, $\textnormal{``}g\textnormal{''} \coloneqq f\hat{u}$, $\textnormal{``}h\textnormal{''} \coloneqq g\hat{u}$, $\textnormal{``}\alpha\textnormal{''}$ as the composition $w(f\hat{u}) \simeq (wf)\hat{u} \Mr{\alpha\star \hat{u}} (wg)\hat{u} \simeq w(g\hat{u})$ and $\textnormal{``}\beta\textnormal{''}$ as the composition $P(f\hat{u}) \simeq (Pf)u \Mr{\widetilde{\beta}} (Pg) u \simeq P(g\hat{u})$ (where the unnamed isomorphisms are either  structural 2-cells of $F$ or associators).
    To verify the hypothesis in the Lemma amounts to checking that $P(\alpha \star \hat{u})$ equals the following composition (where for convenience we write $\circ$ for the composition of arrows in $\cc{B}$ but we omit this symbol for the composition in $\cc{E}$)
    \[
        P((wf)\hat{u})
        \simeq
        Pw\circ (Pf\circ u)
        \Mr{(Pw)\star\widetilde{\beta}}
        Pw\circ (Pg\circ u)
        \simeq
        P((wg)\hat{u}),
    \]
    where each of the two unnamed isomorphisms are given by the structural 2-cells of $F$ and the associators, or equivalently that the two dashed paths in the cube below are equal.
    \[\begin{tikzcd}[execute at end picture={
            \begin{scope}[on background layer]
                \draw[->] (a) -- (b) node[below,xshift={-4em}] {$\scriptstyle {a_{Pw,Pf,u}}$};
            \end{scope}
            \draw[preaction={draw,line width=1pt,white},dashed,->] (c) -- (d)
                node[above,xshift={-4em}] {$\scriptstyle {Pa_{w,g,\hat{u}}}$};
        }]
    	{P((wf)\hat{u})}
    	&& {P(w(f\hat{u}))} \\
    	{P(wf)\circ u}
    	& |[alias = c]| {P((wg)\hat{u})}
    	& {Pw\circ P(f\hat{u})}
    	&  |[alias = d]| {P(w(g\hat{u}))} \\
    	|[alias = a]| {(Pw\circ Pf) \circ u}
    	& \contour{white}{$P(wg)\circ u$}
    	& |[alias = b]| {Pw\circ (Pf \circ u)}
    	& {Pw\circ P(g\hat{u})} \\
    	& {(Pw\circ Pg) \circ u}
    	&& {Pw\circ(Pg\circ u)}
    	\arrow["{\widetilde{P\alpha} \star u}"{description}, from=3-1, to=4-2]
    	\arrow["{P\alpha \star u}"{description}, from=2-1, to=3-2]
    	\arrow[dashed, "{P(\alpha\star \hat{u})}"{description}, from=1-1, to=2-2]
    	\arrow["{P^{-2}_{wf,\hat{u}}}"', from=1-1, to=2-1]
    	\arrow["{P^{-2}_{wf,\hat{u}}}", from=2-2, to=3-2]
    	\arrow[dashed,"{P^{-2}_{w,f\hat{u}}}", from=1-3, to=2-3]
    	\arrow[dashed,"{P^{-2}_{w,f\hat{u}}}", from=2-4, to=3-4]
    	\arrow[dashed,"{(Pw)\star P^{-2}_{f,\hat{u}}}", from=3-4, to=4-4]
    	\arrow[dashed,"{Pa_{w,f,\hat{u}}}", from=1-1, to=1-3]
    	\arrow["{P^{-2}_{w,f}\star u}"', from=2-1, to=3-1]
    	\arrow["{P^{-2}_{w,f}\star u}", from=3-2, to=4-2]
    	\arrow["{a_{Pw,Pg,u}}"', from=4-2, to=4-4]
    	\arrow[dashed,"{(Pw)\star P^{-2}_{f,\hat{u}}}", from=2-3, to=3-3]
    	\arrow[dashed,"{Pw \star \widetilde{\beta}}"{description}, from=3-3, to=4-4]
    \end{tikzcd}\]
    
    Since the top part of the left face, the front and the back faces are all commutative by the definition of pseudo-functor, the bottom part of the left side is commutative by the definition of $\widetilde{P\alpha}$ above, and the commutativity of the bottom face is precisely equation \eqref{eq:tildePalpha}, these two paths are indeed equal as desired.
    
    The equation $\textnormal{``}f \star \widehat{\beta} = \alpha\textnormal{''}$ in the conclusion of item 1 in Lemma \ref{lem:cartesian-arrow}, when applied with the definitions above, becomes then $a_{w,g,\hat{u}} \circ (\alpha \star \hat{u}) = (w \star \widehat{\beta}) \circ a_{w,f,\hat{u}}$, as required in the axiom \ref{1-Frc}.

    \item[\ref{2-Frc}]
    Let us consider objects $B,C,D\colon\mathcal{E}$, arrows $w\colon C \wto D$ in $\cc{C}_\mathcal{W}$, $f,g\colon B \to C$, and 2-cells $\alpha, \beta\colon f \Rightarrow g$, such that $w \star \alpha = w \star \beta$.
    We apply $P$ and use \ref{2-Frc} in $\cc{B}$, in this way we have $u\colon A \wto PB$ in $\cc{W}$ such that $(P\alpha) \star u = (P\beta) \star u$, and we lift $u$ to a Cartesian arrow $\hat{u}\colon \hat{A} \to B$.
    Then, since by pre- and post-composing by structural cells of $P$ we get $P(\alpha \star \hat{u}) = P(\beta \star \hat{u})$ and since $w \star (\alpha \star \hat{u}) = w \star (\beta \star \hat{u})$, by item 2 in Lemma \ref{lem:cartesian-arrow} $\alpha \star \hat{u} = \beta \star \hat{u}$ and we are done.
\end{enumerate}
\end{proof}

\noindent
Combining Lemmas \ref{lem:pseudocofilteredimpliesfractions} and \ref{lem:1-fibrationsliftfractions}, we have a bicategorical version of 
\cite[(SGA 4, Tome 2) exp. VI, Prop. 6.4]{GSV72}:

\begin{corollary}\label{coro:BpseudofilteredEfractionsasinSGA}
Let $\mathcal{E} \to \mathcal{B}$ be a fibration of bicategories. If $\mathcal{B}$ is pseudocofiltered, then the Cartesian arrows satisfy right fractions.
\end{corollary}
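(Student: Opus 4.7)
The plan is to assemble the corollary directly from the two main results that were just established in this subsection, namely Lemma \ref{lem:pseudocofilteredimpliesfractions} and the Lifting Fractions Lemma \ref{lem:1-fibrationsliftfractions}. There is genuinely no new content to prove: the corollary is the composite of the two lemmas applied to the largest possible family of arrows.

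More precisely, I would proceed as follows. Since $\mathcal{B}$ is pseudocofiltered, Lemma \ref{lem:pseudocofilteredimpliesfractions} yields that the collection $\mathcal{W}$ consisting of \emph{all} arrows of $\mathcal{B}$ admits a bicalculus of right fractions. Observe that the pseudocofilteredness hypothesis is used exactly in the form it is stated in Lemma \ref{lem:pseudocofilteredimpliesfractions} (recall that $\mathcal{B}$ is pseudocofiltered iff $\mathcal{B}^{\textnormal{op}}$ is pseudofiltered, i.e. satisfies $\ref{0-pFlt}^{\textnormal{op}}$, $\ref{1-pFlt}^{\textnormal{op}}$, $\ref{2-pFlt}^{\textnormal{op}}$ together with Lemma \ref{lem:0-pflt-commutes-for-free}).

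Next, since $P\colon \mathcal{E} \to \mathcal{B}$ is a fibration, it is in particular a 1-fibration in the sense of Definition \ref{def:1-fibration}, so the hypotheses of Lemma \ref{lem:1-fibrationsliftfractions} are met for this choice of $\mathcal{W}$. Applying that lemma, the family $\mathcal{C}_{\mathcal{W}}$ of Cartesian arrows of $P$ lying over arrows in $\mathcal{W}$ admits a bicalculus of right fractions on $\mathcal{E}$. Since every arrow of $\mathcal{B}$ belongs to $\mathcal{W}$, the family $\mathcal{C}_{\mathcal{W}}$ coincides with the family of all Cartesian arrows of $P$, which is the claim.

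The only thing that could possibly be called a subtlety is the remark that $\mathcal{C}_\mathcal{W}$ is indeed the full family of Cartesian arrows in this situation, but this is immediate from the definition once $\mathcal{W}$ has been taken to be all of $\mathcal{B}_1$. Hence the proof is essentially one line, with all of the real work having been done in the two cited lemmas.
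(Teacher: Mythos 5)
Your proposal is correct and follows exactly the paper's argument: the paper derives this corollary precisely by combining Lemma \ref{lem:pseudocofilteredimpliesfractions} (all arrows of a pseudocofiltered $\mathcal{B}$ satisfy right fractions) with the Lifting Fractions Lemma \ref{lem:1-fibrationsliftfractions} applied to the family of all arrows, so that $\mathcal{C}_{\mathcal{W}}$ is the full family of Cartesian arrows. Nothing is missing.
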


Of course, we also have the dual results.
Dualizing a calculus of fractions by $-^{co}$ doesn't change the definition, and dualizing a right/left calculus of fractions by $-^{\textnormal{op}}$ makes it into a left/right calculus of fractions. Hence a co-fibration (see \cite[Remark 2.2.14]{Buc14}) lifts left calculi, an op-fibration lifts right calculi, and a coop-fibration lifts left calculi. 
We record here the following version of Corollary \ref{coro:BpseudofilteredEfractionsasinSGA}, which is the one we will actually use to compute colimits in this paper.

\begin{corollary} \label{coro:BpseudofilteredEfractions}
Let $\mathcal{E} \to \mathcal{B}$ be a co-fibration of bicategories. If $\mathcal{B}$ is pseudofiltered, then the co-Cartesian arrows satisfy left fractions.
\end{corollary}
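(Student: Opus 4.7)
The plan is to derive this corollary directly from Corollary \ref{coro:BpseudofilteredEfractionsasinSGA} by applying the $(-)^{\textnormal{op}}$ duality. The paragraph preceding the statement already spells out the relevant dictionary: $(-)^{\textnormal{op}}$ turns co-fibrations into fibrations and swaps right with left bicalculi of fractions. Since every ingredient of the statement dualizes cleanly, there is no genuine obstacle here; the proof is essentially a bookkeeping exercise.

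Concretely, I would first observe that if $P\colon \mathcal{E} \to \mathcal{B}$ is a co-fibration, then $P^{\textnormal{op}}\colon \mathcal{E}^{\textnormal{op}} \to \mathcal{B}^{\textnormal{op}}$ is a fibration, and that a co-Cartesian arrow of $P$ is exactly a Cartesian arrow of $P^{\textnormal{op}}$. By Definition \ref{def:pseudofiltered}, $\mathcal{B}$ is pseudofiltered if and only if $\mathcal{B}^{\textnormal{op}}$ is pseudocofiltered. So the hypothesis of Corollary \ref{coro:BpseudofilteredEfractionsasinSGA} is satisfied for $P^{\textnormal{op}}$, and applying it yields that the Cartesian arrows of $P^{\textnormal{op}}$ admit a right bicalculus of fractions on $\mathcal{E}^{\textnormal{op}}$.

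To finish, I would invoke Definition \ref{def:fractions}: a family of arrows satisfies right fractions in $\mathcal{E}^{\textnormal{op}}$ precisely when the corresponding family satisfies left fractions in $\mathcal{E}$. Combining this with the identification of the co-Cartesian arrows of $P$ with the Cartesian arrows of $P^{\textnormal{op}}$ gives exactly the desired conclusion. The only part requiring any care is making sure the identifications between the various dualized notions (co-fibrations, co-Cartesian arrows, and left fractions) are compatible as stated in the discussion preceding the corollary, but this is routine.
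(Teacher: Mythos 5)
Your proof is correct and is essentially the same argument the paper intends: the corollary is recorded as the $(-)^{\textnormal{op}}$-dual of Corollary \ref{coro:BpseudofilteredEfractionsasinSGA}, justified by exactly the dictionary (co-fibrations as $(\textnormal{fibration})^{\textnormal{op}}$, pseudofiltered vs.\ pseudocofiltered, right vs.\ left fractions) that you spell out.
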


\section{Bicategory-Indexed Tricolimits of Bicategories}\label{sec:colimits}
In ordinary 1-category theory, a pseudo-colimit of categories can be computed by localizing, at the Cartesian arrows, the fibration associated to the diagram by its Grothendieck construction \cite[(SGA 4, Tome 2) Expos\'e VI Section 6]{GSV72}.
In this section we show, using fibrations and localizations of bicategories as described in Sections \ref{subsec:fibrations} and \ref{sec:diagrammatic} respectively, how conical tricolimits of bicategories (as in Definition \ref{def:tricolimit}) can similarly be computed by localizing the associated fibration at the Cartesian arrows and 2-cells.
As an application, we show in Section \ref{subsec:colimitcat} that we can compute bicategory-indexed pseudofiltered pseudo-colimits of categories by using only the ordinary one-dimensional calculus of fractions.

\subsection{Computing colimits in Bicat}\label{subsec:computingcolimits}
Let $\mathcal{B}$ be a bicategory.
In this subsection we will consider a trihomomorphism $F\colon \mathcal{B} \to \Bicat$, as originally defined in \cite{GPS95} and developed in more detail in \cite{Gur09}, and show that one can compute its conical tricolimit by localizing the bicategory given by its Grothendieck construction (or bicategory of elements, $\el F$).

We recall that for arbitrary tricategories $\cc{A}$ and $\cc{B}$, there is a tricategory $[\cc{B},\cc{A}]$ of trihomomorphisms, trinatural transformations, trimodifications and perturbations, defined in \cite{GPS95,Gur09}.
A trihomomorphism $F$ as above can be seen as an object of $[\cc{B},\Bicat]$, when $\cc{B}$ is interpreted as a tricategory with trivial 3-cells.

\begin{remark}[Variance and Duality]\label{rem:variance-duality} 
Since we have chosen to work with a covariant $F\colon \mathcal{B} \to \Bicat$, it is convenient for us to consider the bicategory of elements $\el F$, with objects given by pairs $(C,x)$, with $x \colon FC$ as usual, (that we will denote here as $(x,x_-)$ for the sake of comparison with \cite{Buc14}) and the following arrows and 2-cells
\[\begin{tabular}{c|c}
    $(f,f_-)\colon (x,x_-) \to (y,y_-)$
    & $(\alpha,\alpha_-)\colon (f,f_-) \Rightarrow (g,g_-)$\\
    $\xymatrix{
        Ff(x_-) \ar[r]^{f_-}
        & y_-
    }$
        & $\xymatrix{
            Ff(x_-) \ar[dr]_{(F\alpha)_{x_-}}
                \ar[rr]^{f_-}
                \ar@{}[drr]|{\alpha_- \Downarrow}
            && y_- \ar@{<-}[dl]^{g_-} \\
            & Fg(x_-)
            &
        }$
\end{tabular}\]
Throughout the paper, $\el F$ will refer to this bicategory, that can be traced back to at least \cite{Str76} for the case of 2-categories. 
We found this particular choice of directions to be convenient for doing computations in the covariant case, since it induces a covariant trihomomorphism $\el \colon [\mathcal{B},\Bicat] \to \Bicat/\mathcal{B}$, without any appearance of dual bicategories.
We remark however that this is done only for convenience, and we could get similar results in this paper working with the construction $\el'$ below instead (this is similar to how one can think of a pseudocone as either a lax or oplax cone whose structural 2-cells are invertible).
    
The bicategory $\el F$ can be compared with the construction in \cite{Bak} and \cite{Buc14}, where (following the classical correspondence between fibrations and pseudo-functors) one starts from a trihomomorphism $\mathcal{B}^{coop} \to \Bicat$ and defines the arrows and 2-cells of its Grothendieck construction as:
\[\begin{tabular}{c|c}
    $(f,f_-)\colon (x,x_-) \to (y,y_-)$
        & $(\alpha,\alpha_-)\colon (f,f_-) \Rightarrow (g,g_-)$\\
    $\xymatrix{
        x_- \ar[r]^{f_-}
        & Ff(y_-)
    }$
        & $\xymatrix{
            x_- \ar[dr]_{g_-}
                \ar[rr]^{f_-}
                \ar@{}[drr]|{\alpha_- \Downarrow}
            && Ff(y_-) \ar@{<-}[dl]^{(F\alpha)_{y_-}} \\
            & Fg(y_-)
            &
        }$
\end{tabular}\]
If we denote this construction by $\el'$, we observe that we have
\begin{equation}\label{eq:elasel'}
    \el F \coloneqq (\el' ((D_{\textnormal{op}} \circ F)^{co}))^{\textnormal{op}}
\end{equation}
where $D_{\textnormal{op}}\colon \Bicat \to \Bicat^{co}$ is the dual operator sending a bicategory to its op-dual bicategory $\mathcal{X} \mapsto \mathcal{X}^{op}$, and where the $\el '$ construction is taken over the base bicategory $\mathcal{B}^{op}$ instead of over $\mathcal{B}$.
As it is defined as a $(\textnormal{fibration})^{op}$, we refer to it (and to the bicategories having the corresponding lifting properties) as a {co}-fibration.

Using \eqref{eq:elasel'} we can translate definitions and results from \cite{Buc14} to our setting, as in the following proposition.
\end{remark}

\sloppy
\begin{proposition}[{\cite[Prop.~3.3.4]{Buc14}}] \label{prop:grothendieckyieldscofibration}
For each trihomomorphism $F\colon\mathcal{B} \to \Bicat$, its Grothendieck construction yields a {co}-fibration of bicategories $P_F\colon\el F \to \mathcal{B}$ whose co-Cartesian arrows (resp. 2-cells) are those whose second coordinate is an equivalence (resp. invertible).
\qed
\end{proposition}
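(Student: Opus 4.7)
The plan is to deduce this proposition directly from \cite[Prop.~3.3.4]{Buc14} via the duality formula \eqref{eq:elasel'} recorded in Remark \ref{rem:variance-duality}. That remark identifies $\el F$ with $(\el'((D_{\textnormal{op}} \circ F)^{co}))^{\textnormal{op}}$, so the structural pseudo-functor $P_F$ is the $(-)^{\textnormal{op}}$-dual of the one produced by the $\el'$ construction applied to $G \coloneqq (D_{\textnormal{op}} \circ F)^{co}$.

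First, I would apply Buckley's result to $G$, which is a trihomomorphism $\mathcal{B}^{coop} \to \Bicat$ of precisely the form treated in \cite{Buc14}. This yields a fibration $\el' G \to \mathcal{B}^{coop}$ whose Cartesian arrows (resp.\ 2-cells) are exactly those whose second coordinate is an equivalence (resp.\ invertible 2-cell) in the relevant hom-bicategory of $G(y)$.

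Next, I would transport this across $(-)^{\textnormal{op}}$. By the very definition of co-fibration (a pseudo-functor $P$ is a co-fibration iff $P^{\textnormal{op}}$ is a fibration, with co-Cartesian arrows corresponding to Cartesian arrows of $P^{\textnormal{op}}$ and similarly for 2-cells, cf.\ \cite[Remark 2.2.14]{Buc14}), the fibration $\el' G \to \mathcal{B}^{coop}$ becomes a co-fibration $(\el' G)^{\textnormal{op}} \to \mathcal{B}$. Via the identification \eqref{eq:elasel'}, this is exactly $P_F\colon \el F \to \mathcal{B}$.

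Finally, the characterization of co-Cartesian arrows and 2-cells would follow from the fact that each of the duality operators $(-)^{\textnormal{op}}$, $(-)^{co}$ and $D_{\textnormal{op}}$ preserves and reflects both the property of being an equivalence and that of being an invertible 2-cell. Since at the level of arrows and 2-cells the identification \eqref{eq:elasel'} merely relabels the components without introducing extra structure, the second coordinate of an arrow of $\el F$ is an equivalence precisely when the corresponding second coordinate in $\el' G$ is, and likewise for 2-cells. The main obstacle is purely bookkeeping: there is no new mathematical content beyond \cite{Buc14}, but some care is needed to ensure that the sources and targets of the composed trihomomorphisms and duality operators all match correctly, and that the direction conventions used in Remark \ref{rem:variance-duality} line up exactly with the direction conventions of \cite[\S3.3]{Buc14}.
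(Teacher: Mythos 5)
Your proposal is correct and matches the paper's treatment exactly: the paper states this result with a \qed, citing \cite[Prop.~3.3.4]{Buc14} and relying on the duality translation $\el F = (\el'((D_{\textnormal{op}} \circ F)^{co}))^{\textnormal{op}}$ set up in Remark \ref{rem:variance-duality}, which is precisely the argument you spell out. The only thing to double-check is the base of the intermediate fibration (the paper takes $\el'$ over $\mathcal{B}^{\textnormal{op}}$, so the fibration lives over $\mathcal{B}^{\textnormal{op}}$ before applying $(-)^{\textnormal{op}}$), but this is the bookkeeping you already flagged.
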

\fussy

\begin{definition}[Constant Trihomomorphism]\label{def:constant}
Let $\cc{B}, \cc{A}$ be tricategories.
The trihomomorphism $\Delta\colon\cc{A} \to [\cc{B},\cc{A}]$ maps $A\colon\cc{A}$ to the constant trihomomorphism that maps all objects of $\mathcal{B}$ to $A$, and all arrows, 2- and 3-cells of $\cc{B}$ to identities.
\end{definition}

Consider the Cartesian product $\mathcal{B} \times \cc{X}$ of bicategories, whose structure is defined pairwise.
Then the projection $\pi_1\colon \mathcal{B} \times \cc{X} \to \cc{B}$ is a fibration of bicategories whose Cartesian arrows (resp. 2-cells) are those pairs of arrows (resp. 2-cells) whose second coordinate is an equivalence (resp. invertible) in $\cc{X}$. 
The following results follows from \cite[Constr. 3.3.3]{Buc14}.

\begin{remark}[Constant Trihomomorphism]\label{rem:clearonobjects}
For any bicategory $\cc{X}$, there is a biequivalence of bicategories (that is actually an isomorphism) $\el (\Delta \cc{X}) \cong \mathcal{B} \times \cc{X}$, making the following triangle commute strictly
\[\xymatrix{
    \el (\Delta \cc{X}) \ar[rd]_{P_{\Delta\cc{X}}}
        \ar[rr]^\cong
    && \mathcal{B} \times \cc{X} \ar[dl]^{\pi_1} \\
    & \cc{B}
}\]
\qed
\end{remark}

We will now consider the following \emph{strict slice} tricategory $\Bicat/\mathcal{B}$, in which Buckley's generalisation of the Grothendieck construction naturally lands:

\begin{enumerate}\setcounter{enumi}{-1}
    \item
    its objects are pairs $(\cc{E},P)$ where $\cc{E}$ is a bicategory and $P\colon \cc{E}\to \mathcal{B}$ is a pseudo-functor - when it is clear we omit the pseudo-functor and denote these by $\cc{E}$,
    
    \item
    its arrows $F\colon (\cc{E},P) \to (\cc{E}^{\prime},P^{\prime})$ are pseudo-functors $F\colon \cc{E} \to \cc{E}^{\prime}$ such that $P^{\prime}\circ F = P$,
    
    \item
    its 2-cells $\alpha\colon F \to F^{\prime}$ are pseudo-natural transformation such that $P^{\prime} \star \alpha = 1_P$,
    
    \item
    its 3-cells $\Gamma\colon \alpha \to \alpha^{\prime}$ are modifications such that $P^{\prime} \star \Gamma = 1_{1_P}$ (where $\star$ stands for the whiskering of a pseudo-functor and a modification).
\end{enumerate}

\begin{remark}[Product of Bicategories]\label{rem:product}
For any bicategory $\cc{A}$, and any pseudo-functor $P\colon \cc{E} \to \cc{B}$, there is a {natural} biequivalence of bicategories (that is actually an isomorphism)
\begin{equation}\label{eq:cartprodandslice}
    (\Bicat/\mathcal{B}) ( \cc{E}, \mathcal{B} \times \cc{A}) \cong [\cc{E},\cc{A}]
\end{equation}
\end{remark}

The tricategory $\textrm{Fib}(\mathcal{B})$ is defined  in \cite[Def. 3.2.4]{Buc14} as a sub-tricategory of $\Bicat/\mathcal{B}$, full on 2- and 3-cells, whose objects are those pseudo-functors that are fibrations of bicategories, and whose arrows are those pseudo-functors that are Cartesian as defined in \cite[Def. 3.2.4]{Buc14}, that is such that they respect both Cartesian arrows and 2-cells.
We define $\coFib(\mathcal{B})$ analogously.
The biequivalence of bicategories considered in Remark \ref{rem:clearonobjects} is clearly Cartesian in this sense. 
Recalling then the notation in Definition \ref{def:localizationformodel}, we conclude:

\begin{remark}[Constant Trihomomorphism]\label{rem:constant}
For any bicategory $\cc{A}$, and any pseudo-functor $P\colon \cc{E} \to \cc{B}$, there is a {natural} biequivalence of bicategories (that is actually an isomorphism)
\begin{equation}\label{eq:combinetwo}
    \coFib(\cc{B})(\cc{E}, \el (\Delta \cc{A})) \cong [\cc{E},\cc{A}]_{\cc{C}_{1,2}}
\end{equation}
(given by postcomposition with the biequivalence of bicategories  considered in Remark \ref{rem:clearonobjects} together with \eqref{eq:cartprodandslice})
where $[\cc{E},\cc{A}]_{\cc{C}_{1,2}}$ is the bicategory introduced in Definition (\ref{def:localizationformodel}) and $\cc{C}_1$, resp. $\cc{C}_2$, are the families of co-Cartesian arrows, resp. 2-cells, of $\cc{E}$.
\end{remark}

\begin{definition}[Tricolimits]\label{def:tricolimit}
Let $F \colon\cc{B} \to \cc{A}$ be a trihomomorphism between tricategories. 
We consider the trihomomorphism
\[
    \cc{A}
    \mr{\Delta -}
    [\cc{B},\cc{A}]
    \xr{[\cc{B},\cc{A}](F,-)}
    \Bicat
\]
mapping $A\colon \cc{A}$ to the bicategory $[\cc{B},\cc{A}](F,\Delta A)$.
If this trihomomorphism is representable (in the sense of tricategory theory), we say that $F$ has a tricolimit, and we refer to the object $L$ representing it as the tricolimit of $F$.
Explicitly, this amounts to saying that there are biequivalences of bicategories, natural in $A$:
\[
    [\cc{B},\cc{A}](F,\Delta A) \simeq \cc{A}(L,A)
\]
\end{definition}

\begin{proposition}[{\cite[Prop.~3.3.12]{Buc14}}, Local Biequivalence]\label{prop:grothendiecklocalequivalence}
For $F,G\colon \mathcal{B} \to \Bicat$, the Grothendieck construction yields a biequivalence of bicategories
\begin{equation}\label{eq:grothendiecklocalequivalence}
    [\mathcal{B},\Bicat](F, G)
    \simeq
    \coFib(\mathcal{B})(\el F,\el G)
\end{equation}
that is natural in $F$ and $G$.
\qed
\end{proposition}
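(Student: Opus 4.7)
The plan is to construct the biequivalence explicitly by exhibiting a pseudo-inverse via fiberwise reconstruction. Morally, a Cartesian pseudo-functor $H\colon \el F \to \el G$ over $\cc{B}$ determines, for each object $B\colon \cc{B}$, a pseudo-functor between the fibers $FB \to GB$, and $H$'s behaviour on co-Cartesian arrows encodes the naturality data of a trinatural transformation; the content of the proposition is that this correspondence is coherent at all three levels.

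First, I would extend $\el$ to a trihomomorphism by describing it on higher cells. For a trinatural transformation $\alpha\colon F \to G$, define $\el\alpha\colon \el F \to \el G$ by $(B,x) \mapsto (B,\alpha_B(x))$ on objects, and on an arrow $(f,f_-)\colon (B,x) \to (C,y)$ by sending it to $(f,\alpha_C(f_-)\circ (\alpha_f)_x)$, where $\alpha_f \colon G(f)\alpha_B \Rightarrow \alpha_C F(f)$ is the naturality datum of $\alpha$. This strictly commutes with the projection to $\cc{B}$, and preserves co-Cartesian arrows since these are characterised (Proposition~\ref{prop:grothendieckyieldscofibration}) by the second coordinate being an equivalence. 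Trimodifications then become pseudo-natural transformations in $\coFib(\cc{B})$, and perturbations become modifications, with the required constraints assembled from the corresponding coherence data of $\alpha$ and its higher cells.

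Second, for essential surjectivity, I would reconstruct a trinatural transformation $\alpha^H\colon F \to G$ from a Cartesian pseudo-functor $H\colon \el F \to \el G$ over $\cc{B}$. Take $\alpha^H_B$ to be the restriction of $H$ to the fiber over $B$, well-defined because $H$ commutes with the projections. For each $f\colon B \to C$ in $\cc{B}$ and $x\colon FB$, choose a co-Cartesian lift of $f$ at $(B,x)$ in $\el F$; since $H$ sends it to a co-Cartesian arrow above $f$ in $\el G$, the universal property (the co-dual of Lemma~\ref{lem:cartesian-arrow}) yields a canonical invertible 2-cell $G(f)\alpha^H_B(x) \Rightarrow \alpha^H_C F(f)(x)$, assembling into the naturality datum $\alpha^H_f$. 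The remaining trinatural coherence axioms with respect to composition, units, and 2-cells in $\cc{B}$ follow from the uniqueness clauses of the co-Cartesian lifting property together with $H$ being a pseudo-functor. An equivalence $H \simeq \el(\alpha^H)$ inside $\coFib(\cc{B})(\el F, \el G)$ is then induced by comparing co-Cartesian lifts.

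Third, the local equivalence follows by an analogous one-dimension-down argument: given $\alpha, \beta$, a pseudo-natural transformation $\el\alpha \to \el\beta$ in $\coFib(\cc{B})$ is reconstructed as a trimodification $\alpha \to \beta$ by restriction to fibers, and modifications correspond similarly to perturbations. Naturality of the whole construction in $F$ and $G$ is automatic from the explicit formulas. The main obstacle is the coherence bookkeeping: each of trinatural transformations, trimodifications, and perturbations carries substantial coherence data, and one must verify that these match, under the reconstruction, the pseudo-functor, pseudo-natural-transformation, and modification axioms in $\coFib(\cc{B})$. Carrying this out requires a careful pasting-diagram check using the framework of \cite{GPS95, Gur09}, and constitutes the technical heart of the proof given in \cite{Buc14}.
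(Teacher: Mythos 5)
The paper does not prove this proposition: it is imported verbatim from \cite[Prop.~3.3.12]{Buc14} (hence the immediate \verb|\qed|), with the only ``work'' in the paper being the translation from Buckley's contravariant fibration setting to the covariant co-fibration setting via the dualities of Remark \ref{rem:variance-duality} and equation \eqref{eq:elasel'}. Your outline is a faithful sketch of the standard argument that Buckley actually carries out: extend $\el$ to higher cells, reconstruct a trinatural transformation from a Cartesian pseudo-functor by restricting to fibers and using the universal property of (co-)Cartesian lifts, and descend one dimension for the local equivalence. Your formula for $\el\alpha$ on an arrow $(f,f_-)$ type-checks in the paper's conventions, and your appeal to Proposition \ref{prop:grothendieckyieldscofibration} for preservation of co-Cartesian arrows is the right mechanism. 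Two caveats: (i) what you have written is an outline, not a proof --- you explicitly defer the coherence verifications (the trinaturality axioms for $\alpha^H$, the trimodification and perturbation correspondences, and naturality in $F$ and $G$) to a ``careful pasting-diagram check,'' which is precisely where essentially all of the content of \cite{Buc14} lies; since the paper likewise defers to \cite{Buc14}, this is a fair stopping point, but one should not mistake the sketch for a self-contained argument; (ii) if you intend your construction to match the paper's covariant $\el$ rather than Buckley's, you should either verify the coherences directly in that setting or note that they follow formally by applying the dualities in \eqref{eq:elasel'} to Buckley's statement, which is what the paper implicitly does.
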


We are now ready to prove:

\begin{theorem}[Conical Tricolimits in $\Bicat$]\label{thm:tricolimit}
Let $\mathcal{B}$ be a bicategory, and $F\colon \mathcal{B} \to \Bicat$ be a trihomomorphism. 
Then the tricolimit of $F$ in $\Bicat$ is given by the localization $(\el F)[\mathcal{C}_{1,2}^{-1}]$ as defined in Definition \ref{def:localizationformodel}, where $\cc{C}_1$, resp. $\cc{C}_2$ are the families of co-Cartesian arrows, resp. co-Cartesian 2-cells of $\el F$.
\end{theorem}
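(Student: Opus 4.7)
The plan is to establish the tricolimit property by chaining three natural biequivalences that have all been set up in the preceding pages, exactly as the introduction promised. By Definition \ref{def:tricolimit}, it suffices to produce, naturally in $A \colon \Bicat$, a biequivalence of bicategories
\[
    [\mathcal{B},\Bicat](F,\Delta A) \simeq \Bicat\bigl((\el F)[\mathcal{C}_{1,2}^{-1}],\, A\bigr).
\]

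First, I apply Proposition \ref{prop:grothendiecklocalequivalence} with $G := \Delta A$, giving
\[
    [\mathcal{B},\Bicat](F,\Delta A) \simeq \coFib(\mathcal{B})\bigl(\el F,\, \el(\Delta A)\bigr).
\]
Next, I invoke Remark \ref{rem:constant} at $\cc{E} := \el F$ to obtain the isomorphism
\[
    \coFib(\mathcal{B})\bigl(\el F,\, \el(\Delta A)\bigr) \cong [\el F, A]_{\mathcal{C}_{1,2}},
\]
using Proposition \ref{prop:grothendieckyieldscofibration} to identify the families $\mathcal{C}_1$ and $\mathcal{C}_2$ on the right with the co-Cartesian arrows and 2-cells of $P_F \colon \el F \to \mathcal{B}$. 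Finally, the universal property of the localization in Definition \ref{def:localizationformodel} yields
\[
    [\el F, A]_{\mathcal{C}_{1,2}} \simeq \Bicat\bigl((\el F)[\mathcal{C}_{1,2}^{-1}],\, A\bigr).
\]
Composing these three biequivalences delivers the required representability, and hence $(\el F)[\mathcal{C}_{1,2}^{-1}]$ represents the tricolimit.

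The only thing left to verify is that each step is natural in $A$, and that the composite is a morphism of trihomomorphisms (not merely a pointwise biequivalence), so that representability in the tricategorical sense is genuinely witnessed. Naturality for step one is part of the statement of Proposition \ref{prop:grothendiecklocalequivalence}, applied with $G$ varying functorially in $A$ through $\Delta$; for step two it is recorded in Remark \ref{rem:constant}; and for step three it is immediate from the universal property of localization. I expect the sole point demanding care to be bookkeeping of variance, since Proposition \ref{prop:grothendieckyieldscofibration} produces a \emph{co}-fibration and one must keep the dualisations of Remark \ref{rem:variance-duality} straight; once that is lined up, the argument is the one-line computation advertised in the introduction.
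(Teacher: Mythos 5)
Your proof is correct and follows essentially the same route as the paper: the paper's proof is precisely the chain $[\mathcal{B},\Bicat](F,\Delta(-)) \simeq \coFib(\mathcal{B})(\el F,\el(\Delta(-))) \simeq [\el F,-]_{\mathcal{C}_{1,2}}$ obtained from Proposition \ref{prop:grothendiecklocalequivalence} and Remark \ref{rem:constant}, followed by the observation that the tricolimit and the localization are the trirepresentations of the two ends of this chain. The only stylistic difference is that the paper phrases the conclusion as an identification of two trirepresentability problems rather than invoking the localization's universal property as a separate third biequivalence, but this is the same content.
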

\begin{proof} 
Using in turn \eqref{eq:grothendiecklocalequivalence} and \eqref{eq:combinetwo}, we have the desired natural biequivalence of bicategories
\[
    [\mathcal{B},\Bicat](F, \Delta(-))
    \simeq
    \coFib(\mathcal{B})(\el F,\el (\Delta(-)))
    \simeq
    [\el F, -]_{\cc{C}_{1,2}}
\]
(Note that the tricolimit of $F$, resp. the desired localization, is defined as a trirepresentation of the trihomomorphism on the left hand side, resp. right hand side.)
\end{proof}

As far as we know, the construction of a localization of a bicategory at both arrows and non-trivial 2-cells has never been considered. 
In this paper, we will apply this result in a case when we won't need to localize by the 2-cells: the \emph{discrete case}, that is the computation of pseudofiltered pseudo-colimits of categories in the following section. 
As will be discussed in a follow-up paper \cite{BPS}, pseudofiltered tricolimits of bicategories can also be computed without localizing at the 2-cells, because the condition in the following corollary holds when $\cc{B}$ is pseudofiltered:

\begin{corollary}\label{coro:arrows-sufficient}
Let $\mathcal{B}$ be a bicategory, and $F\colon \mathcal{B} \to \Bicat$ be a trihomomorphism.
Suppose that the localization $(\el F)[\mathcal{C}_{1}^{-1}]$ exists and that $L_{\mathcal{C}_{1}}\colon \el F \to (\el F)[\mathcal{C}_{1}^{-1}]$ sends co-Cartesian 2-cells to invertible ones, where $\cc{C}_1$ is the families of co-Cartesian arrows of $\el F$.
Then the tricolimit of $F$ in $\Bicat$ exists and is given by the localization $(\el F)[\mathcal{C}_{1}^{-1}]$. 
\end{corollary}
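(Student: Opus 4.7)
The plan is to deduce this corollary from Theorem \ref{thm:tricolimit} by showing that, under the present hypothesis, the localization at $\mathcal{C}_1$ already has the universal property of the tricolimit. The proof of Theorem \ref{thm:tricolimit} establishes the natural biequivalence
\[
    [\mathcal{B},\Bicat](F, \Delta(-)) \simeq [\el F, -]_{\mathcal{C}_{1,2}}
\]
independently of any representability assumption: it is a direct consequence of Proposition \ref{prop:grothendiecklocalequivalence} together with Remark \ref{rem:constant}. Hence it suffices to show that the sub-bicategories $[\el F, \mathcal{A}]_{\mathcal{C}_{1,2}}$ and $[\el F, \mathcal{A}]_{\mathcal{C}_{1}}$ coincide for every bicategory $\mathcal{A}$; combining this with the assumed existence of $(\el F)[\mathcal{C}_1^{-1}]$ then yields a chain of natural biequivalences
\[
    [\mathcal{B},\Bicat](F, \Delta(-)) \simeq [\el F, -]_{\mathcal{C}_{1,2}} = [\el F, -]_{\mathcal{C}_{1}} \simeq \Bicat((\el F)[\mathcal{C}_1^{-1}], -),
\]
identifying $(\el F)[\mathcal{C}_1^{-1}]$ as the tricolimit of $F$.

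The inclusion $[\el F, \mathcal{A}]_{\mathcal{C}_{1,2}} \subseteq [\el F, \mathcal{A}]_{\mathcal{C}_{1}}$ is tautological, so the task reduces to the reverse inclusion. I would take a pseudo-functor $G \colon \el F \to \mathcal{A}$ sending co-Cartesian arrows to equivalences, and factor it (up to pseudo-natural equivalence) as $G \simeq \tilde{G} \circ L_{\mathcal{C}_1}$ via the universal property of the localization. Since $L_{\mathcal{C}_1}$ sends co-Cartesian 2-cells to invertible ones by hypothesis, and $\tilde{G}$, being a pseudo-functor, preserves invertibility of 2-cells, the composite $\tilde{G} \circ L_{\mathcal{C}_1}$ also sends co-Cartesian 2-cells to invertibles.

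The only step requiring slight care is transporting this property from $\tilde{G} \circ L_{\mathcal{C}_1}$ back to $G$ through the pseudo-natural equivalence $\eta \colon G \simeq \tilde{G} \circ L_{\mathcal{C}_1}$. Given a co-Cartesian 2-cell $\beta \colon f \Rightarrow g \colon X \to Y$ in $\el F$, the pseudo-naturality axiom of $\eta$ at $\beta$ equates the whiskering $\eta_Y \star G(\beta)$ with a pasting involving $(\tilde{G} L_{\mathcal{C}_1})(\beta) \star \eta_X$ and the invertible naturality 2-cells $\eta_f$ and $\eta_g$. Since this pasting is invertible, so is $\eta_Y \star G(\beta)$; as $\eta_Y$ is an equivalence (by Remark \ref{rem:equivalences}) and hence conservative under whiskering, $G(\beta)$ is itself invertible, as required. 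This last verification is the main technical point of the argument, but it is routine; the essence of the corollary is simply the observation that the two universal properties are the same under the stated hypothesis.
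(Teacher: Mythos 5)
Your proposal is correct and follows essentially the same route as the paper's proof: both reduce the statement to Theorem \ref{thm:tricolimit} by showing that, under the hypothesis, $[\el F, -]_{\mathcal{C}_{1}}$ and $[\el F, -]_{\mathcal{C}_{1,2}}$ coincide, using factorization through $L_{\mathcal{C}_1}$ up to pseudo-natural equivalence. The only difference is that you spell out the (correct) routine verification that invertibility of the image of a co-Cartesian 2-cell transports along the pseudo-natural equivalence, which the paper leaves implicit.
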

\begin{proof}
For $\cc{D}$ a bicategory, as all pseudo-functors in $[\el F,\cc{D}]_{\cc{C}_1}$ factor through $L_{\mathcal{C}_{1}}$ up to natural equivalence, they all send co-Cartesian 2-cells to invertible ones.
Hence $[\el F,\cc{D}]_{\cc{C}_1} \subseteq [\el F,\cc{D}]_{\cc{C}_{1,2}}$. We also have by definition $[\el F,\cc{D}]_{\cc{C}_{1,2}} \subseteq [\el F,\cc{D}]_{\cc{C}_{1}}$.
This implies that both universal properties coincide: a localization by $\cc{C}_1$ is equivalently a localization by $\cc{C}_{1,2}$.
The result is then a direct consequence of Theorem \ref{thm:tricolimit} above.
\end{proof}

\subsection{Pseudofiltered pseudo-colimits in Cat}\label{subsec:colimitcat}
We consider now a pseudo-functor $F\colon \cc{B} \to \Cat$ from $\cc{B}$ a bicategory, and view it as a discrete trihomomorphism $dF\colon \cc{B} \to \Bicat$.
Looking at the proof of Theorem \ref{thm:tricolimit} in this case, we notice that we have equivalences of categories (natural in ${\bf E}\colon\Cat$)
\[
    [\mathcal{B},\Cat](F, \Delta {\bf E})
    \cong
    [\mathcal{B},\Bicat](dF, \Delta d {\bf E})
    \simeq
    \coFib(\mathcal{B})(\el (dF),\el (\Delta d{\bf E}))
    \cong
    [\el (dF), d {\bf E}]_{\cc{C}_1,\cc{C}_2}
\]

By looking explicitly at the proof of the local equivalence of the Grothendieck construction in \cite[Prop. 3.3.12]{Buc14}, we can observe that the only mere equivalence ($\simeq$) left in this chain is, like the others, actually an isomorphism ($\cong$).
Indeed, in the discrete case it is immediate that the inverse Grothendieck construction is an actual inverse, rather than only a quasi-inverse.
Noting that ${\bf E}$ has only trivial 2-cells, we can continue this chain of natural isomorphisms as follows:
\[
    [\el (dF), d {\bf E}]_{\cc{C}_{1,2}}
    \cong
    [\el (dF), d {\bf E}]_{\cc{C}_1}
    \cong
    [\pi_0 \el (dF),{\bf E}]_{[\cc{C}_1]}
    \cong
    [(\pi_0 \el (dF))[[\cc{C}_1]^{-1}], {\bf E}].
\]
where $[\mathcal{C}_1]$ is the family of equivalence classes of co-Cartesian arrows after applying $\pi_0$, as in Lemma \ref{lem:pi0}.
Noting that a pseudo-colimit of a pseudo-functor between bicategories is defined as a strict representation of the pseudo-functor $[\mathcal{B},\Cat](F, \Delta {\bf E})$, we conclude:

\begin{corollary}[Discrete Case]\label{coro:colimit-cat}
Let $F\colon \mathcal{B} \to \Cat$ be a pseudo-functor. 
Then the pseudo-colimit of $F$ is given by the category $\pi_0 (\el d F)[\cc{W}^{-1}]$, where $\cc{W} =[\cc{C}_1]$ is the family of equivalence classes of co-Cartesian arrows.
\qed
\end{corollary}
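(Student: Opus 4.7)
The plan is to deduce the corollary as the discrete specialization of Theorem \ref{thm:tricolimit}, by threading through the chain of natural equivalences/isomorphisms already set up immediately above the statement, and checking that in the $\Cat$-valued setting each biequivalence can be strengthened to an isomorphism and each localization at $\cc{C}_{1,2}$ collapses to a localization at $\cc{C}_1$ alone. Since a pseudo-colimit is characterized by a strict representation of $\mathbf{E} \mapsto [\mathcal{B},\Cat](F, \Delta \mathbf{E})$, the task is to produce a natural isomorphism of this representable with $\Cat(\pi_0(\el dF)[\cc{W}^{-1}], \mathbf{E})$.

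First, I would view $F$ as a trihomomorphism $dF\colon \mathcal{B} \to \Bicat$ under the fully faithful inclusion $d\colon \Cat \hookrightarrow \Bicat$, obtaining a canonical identification $[\mathcal{B},\Cat](F, \Delta \mathbf{E}) \cong [\mathcal{B},\Bicat](dF, \Delta d\mathbf{E})$ (which is strict because both sides only involve 1-categories in the target). Then I would invoke Proposition \ref{prop:grothendiecklocalequivalence} together with Remark \ref{rem:constant} to rewrite the right-hand side successively as $\coFib(\mathcal{B})(\el (dF), \el (\Delta d\mathbf{E}))$ and then as $[\el(dF), d\mathbf{E}]_{\cc{C}_{1,2}}$. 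The key point to verify here is that in the discrete case the Grothendieck construction admits a strict (rather than merely pseudo-) inverse, so the biequivalence of \cite[Prop.~3.3.12]{Buc14} becomes an honest isomorphism of bicategories; this is visible from the explicit formulas and the fact that the ambient hom-bicategories are in fact 1-categories (since $d\mathbf{E}$ is locally discrete).

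Next I would observe that the localization at co-Cartesian 2-cells is automatic: any pseudo-functor $\el(dF) \to d\mathbf{E}$ necessarily sends every invertible 2-cell to the identity (because $d\mathbf{E}$ has only identity 2-cells), and in particular inverts every co-Cartesian 2-cell, so $[\el(dF), d\mathbf{E}]_{\cc{C}_{1,2}} = [\el(dF), d\mathbf{E}]_{\cc{C}_1}$. Since $\pi_0$ is left adjoint to $d$ at the level of hom-bicategories and preserves the class of arrows one inverts, this in turn identifies with $[\pi_0(\el dF), \mathbf{E}]_{[\cc{C}_1]}$ in $\Cat$, where $[\cc{C}_1]$ denotes the image of $\cc{C}_1$ under the quotient $\pi_0$. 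Applying the universal property of the 1-categorical localization yields the final isomorphism with $\Cat(\pi_0(\el dF)[\cc{W}^{-1}], \mathbf{E})$, giving the required strict representation.

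The main obstacle I anticipate is bookkeeping rather than content: one has to make sure that each step in the chain is genuinely a strict natural isomorphism (not merely a biequivalence) so that the composite represents the pseudo-colimit strictly, as required by its definition. This is ultimately a consequence of the target being $\Cat$: the Grothendieck construction has a strict pseudo-inverse in this case, the adjunction $\pi_0 \dashv d$ is strict, and the collapse $\cc{C}_{1,2} \rightsquigarrow \cc{C}_1$ is an equality of hom-categories rather than an equivalence. None of these steps require new machinery beyond what has already been established in Section \ref{sec:colimits}.
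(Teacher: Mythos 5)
Your proposal is correct and follows essentially the same route as the paper: view $F$ as a discrete trihomomorphism, run the chain of identifications from the proof of Theorem \ref{thm:tricolimit}, observe that the Grothendieck construction's biequivalence becomes an isomorphism in the discrete case, note that the condition on co-Cartesian 2-cells is vacuous since $d\mathbf{E}$ is locally discrete, and finish with the adjunction $\pi_0 \dashv d$ and the universal property of the 1-categorical localization. No substantive differences from the paper's argument.
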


\begin{remark}
Note that when $\mathcal{B}$ is a pseudofiltered bicategory, by Proposition \ref{prop:grothendieckyieldscofibration}, Corollary \ref{coro:BpseudofilteredEfractions} and Lemma \ref{lem:pi0}, the category $(\pi_0 \el d F)[\cc{W}^{-1}]$ can be computed as a category of fractions.
\end{remark}

We fix a pseudofiltered bicategory $\mathcal{B}$ and a pseudo-functor $F\colon \mathcal{B} \to \Cat$.
We will now give an explicit formula of its pseudo-colimit, by describing the category $(\pi_0 \el d F)[\cc{W}^{-1}]$. 
An idea to have in mind, in order to understand the content of the following proposition and its proof, is that the computation of this category as a category of fractions has redundant information, and the presentation of Proposition \ref{prop:filteredcolimit} is what we get by discarding this redundancy.
When $\cc{B}$ is a (strict) 2-category, and $F$ is a (strict) 2-functor, this construction can immediately be seen to match the one in \cite{DS06}.

\begin{proposition}\label{prop:filteredcolimit}
Let $\mathcal{B}$ be a pseudofiltered bicategory and $F\colon \mathcal{B} \to \Cat$ a pseudo-functor.
The pseudo-colimit of $F$ can be constructed as the category $\Colim_{\mathcal{B}}F$ defined by the following data:
\begin{enumerate}[align=left,leftmargin=*]
    \item[\textbf{Objects:}]
    Pairs $(A,x)$ where $A\colon\mathcal{B}$ is an object and $x\colon FA$ is an object of $FA$.
    
    \item[\textbf{Premorphisms:}]
    Quadruples $(C, u, v, \xi)\colon(A,x) \to (B,y)$ where $C\colon\mathcal{B}$ is an object, $u\colon A \to C$ and $v\colon B \to C$ are arrows and $\xi\colon Fu(x) \to Fv(y)$ is an arrow of $FC$.
    
    \item[\textbf{Homotopies:}]
    Quintuples $(C, w_1, w_2, \alpha, \beta)\colon (C_1, u_1, v_1, \xi_1) \equiv (C_2, u_2, v_2, \xi_2)$, where $C\colon\mathcal{B}$ is an object, $w_1\colon C_1 \to C$ and $w_2\colon C_2 \to C$ are arrows and $\alpha\colon w_1u_1 \simeq w_2u_2$ and $\beta\colon w_1v_1 \simeq w_2v_2$ are invertible 2-cells such that
    \begin{equation}\label{eq:LL}\vcenter{\xymatrix{
        Fw_1\circ Fu_1 (x) \ar[r]^{(F^2_{w_1, u_1})_x}
            \ar[d]_{Fw_1(\xi_1)}
        & F(w_1u_1)(x) \ar[r]^{(F\alpha)_x}
        & F(w_2u_2)(x) \ar[r]^{(F^{-2}_{w_2, u_2})_x}
        & Fw_2\circ Fu_2(x) \ar[d]^{Fw_2(\xi_2)} \\
        Fw_1\circ Fv_1 (y) \ar[r]_{(F^2_{w_1, v_1})_y}
        & F(w_1v_1)(y) \ar[r]_{(F\beta)_y}
        & F(w_2v_2)(y) \ar[r]_{(F^{-2}_{w_1, v_2})_y}
        & Fw_2\circ Fv_2(y)
    }}\end{equation}

    \item[\textbf{Arrows:}]
    Equivalence classes of premorphisms under the homotopy relation in which two premorphisms are said to be homotopic if there is a homotopy between them.
    
    \item[\textbf{Identities:}]
    For $(A,x)$ an object we define $1_{(A,x)} = [(A, 1_A, 1_A, 1_{F1_A(x)})]$.
    
    \item[\textbf{Composition:}]
    For $\xymatrix{
    (A,x) \ar[rr]^{[(C_1, u_1, v_1, \xi_1)]}
    && (B,y) \ar[rr]^{[(C_2, u_2, v_2, \xi_2)]}
    && (C,z)
    }$ we define the composite as follows.
    For any $D\colon\mathcal{B}$ and $f\colon C_1 \to D$, $g\colon C_2 \to D$ such that we have an invertible 2-cell $\gamma\colon fv_1 \simeq gu_2$, which can be constructed using for example \ref{0-pFlt} (and Lemma \ref{lem:0-pflt-commutes-for-free}), we define 
    \begin{multline}\label{eq:compositioninthepseudocolim}
        [(C_2, u_2, v_2, \xi_2)] \circ [(C_1, u_1, v_1, \xi_1)] =\\
        [(D,fu_1, gv_2, (F^{2}_{g,v_2})_z\circ Fg(\xi_2)\circ (F^{-2}_{g,u_2})_y\circ (F\gamma)_y\circ (F^{2}_{f,v_1})_y\circ Ff(\xi_1)\circ (F^{-2}_{f,u_1})_x)]
    \end{multline}
\end{enumerate}
\end{proposition}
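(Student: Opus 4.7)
The plan is to identify the explicit category $\Colim_{\mathcal{B}} F$ with the fraction category $\pi_0(\el dF)[\cc{W}^{-1}]$, which by Corollary \ref{coro:colimit-cat} computes the desired pseudo-colimit. By Proposition \ref{prop:grothendieckyieldscofibration}, the Grothendieck construction $P_F\colon\el dF \to \mathcal{B}$ is a co-fibration whose co-Cartesian arrows are the pairs $(f, \phi)$ with $\phi$ invertible. Since $\mathcal{B}$ is pseudofiltered, Corollary \ref{coro:BpseudofilteredEfractions} combined with Lemma \ref{lem:pi0} implies that $\cc{W}$, the image of this family under $\pi_0$, admits a one-dimensional left calculus of fractions in $\pi_0(\el dF)$. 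Recall that objects of $\pi_0(\el dF)$ are pairs $(A, x)$, and morphisms $(A, x) \to (B, y)$ are equivalence classes $[f, \phi]$ with $f\colon A \to B$ and $\phi\colon Ff(x) \to y$ in $FB$, modulo zigzags of 2-cells of $\el dF$.

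A morphism in the localization is then an equivalence class of cospans $(A,x) \to (C,z) \leftarrow (B,y)$ whose right leg $[v, \eta]$ has $\eta$ invertible. Every such cospan is equivalent to its normalized form, in which the target is replaced by $(C, Fv(y))$, the right leg by $[v, 1_{Fv(y)}]$, and the left leg by $[u, \eta^{-1}\circ \xi']$; this data is precisely a premorphism $(C, u, v, \xi)$ in the sense of the statement, with $\xi = \eta^{-1}\circ\xi'$. The equivalence relation on left fractions then specializes as follows: two normalized cospans $(C_i, u_i, v_i, \xi_i)$ represent the same morphism if and only if there exist an object $D$ and arrows $w_i\colon C_i \to D$ such that the compositions with $[w_i, -]$ agree in $\pi_0(\el dF)$ on both legs. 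Agreement on the $v$-legs gives (after Corollary \ref{coro:1-pflt-invertible-for-free} is invoked to collapse a 2-cell zigzag into a single invertible 2-cell) an invertible $\beta\colon w_1 v_1 \simeq w_2 v_2$; agreement on the $u$-legs gives an invertible $\alpha\colon w_1 u_1 \simeq w_2 u_2$ subject to the condition that the images of $\xi_1$ and $\xi_2$ under whiskering by $w_i$, pasted with the structural constraints $F^2_{w_i,-}$ and the 2-cells $F\alpha, F\beta$, become equal --- which is exactly the commutativity of the hexagonal diagram \eqref{eq:LL}.

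Finally, I would verify the identities and composition. The premorphism $(A, 1_A, 1_A, 1_{F1_A(x)})$ normalizes the identity cospan on $(A,x)$. To compose $[(C_1, u_1, v_1, \xi_1)]$ and $[(C_2, u_2, v_2, \xi_2)]$ one applies the left Ore property in $\pi_0(\el dF)$; such data can be obtained directly at the level of $\mathcal{B}$ by applying \ref{0-pFlt} and Lemma \ref{lem:0-pflt-commutes-for-free} to $v_1$ and $u_2$, yielding $D$, $f\colon C_1 \to D$, $g\colon C_2 \to D$ and an invertible $\gamma\colon fv_1 \simeq gu_2$. Tracking how $\gamma$ and the $\xi_i$ assemble, together with the structural isomorphisms $F^2, F^0$, into the second coordinate of the composite in $\el dF$ yields exactly the formula \eqref{eq:compositioninthepseudocolim}. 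Independence of the resulting class from the choice of $(D, f, g, \gamma)$ follows from one further application of \ref{0-pFlt} and \ref{2-pFlt} to compare two such choices through a common refinement. The main obstacle is the middle step: patiently unraveling the calculus-of-fractions equivalence relation inside $\pi_0(\el dF)$ and recognizing diagram \eqref{eq:LL} as precisely the coherence imposed on the $\xi_i$ by the presence of the 2-cells $\alpha$ and $\beta$ on the underlying arrows in $\mathcal{B}$.
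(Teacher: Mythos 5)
Your overall strategy is the same as the paper's: identify $\Colim_{\mathcal{B}}F$ with the fractions category $(\pi_0\,\el dF)[\cc{W}^{-1}]$ supplied by Corollary \ref{coro:colimit-cat}, Corollary \ref{coro:BpseudofilteredEfractions} and Lemma \ref{lem:pi0}, normalize cospans so that the right leg has identity second coordinate, and match the two equivalence relations and the two compositions. The objects, the normalization of cospans to premorphisms, the identities, and the treatment of composition (including the independence of the choice of $(D,f,g,\gamma)$, which in the paper is deferred to Remark \ref{rem:oncompositioninthepseudocolim} and ultimately rests on the corresponding fact for $1$-categories of fractions) all line up with the paper's functors $H$ and $K$.

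The gap is in the middle step, and the tool you name for it does not suffice. An equality such as $[(w_1',\kappa)(u_1,\mu_1)]=[(w_2',\lambda)(u_2,\mu_2)]$ in $\pi_0\,\el dF$ is witnessed by a zig-zag of $2$-cells of $\el dF$ passing through \emph{arbitrary} intermediate arrows $(h_i,\sigma_i)\colon (A,x)\to(C',z')$; these $h_i$ need not factor through $u_1$ or $u_2$, so there is nothing to whisker, and Corollary \ref{coro:1-pflt-invertible-for-free} applied to the two endpoints would only produce \emph{some} invertible $2$-cell $\alpha\colon w_1u_1\simeq w_2u_2$ with no relation to the arrows $\sigma_i$ in $FC'$ --- and it is precisely that relation which is needed to verify \eqref{eq:LL}. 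The paper instead assembles the entire zig-zag (for both legs) into one finite connected diagram in $\mathcal{B}$, takes a pseudo-cocone $(C,\phi)$ on it using pseudofilteredness in the form of Definition \ref{def:pseudofiltered}, and defines $\alpha=\phi_{u_2}^{-1}\phi_{u_1}$, $\beta=\phi_{v_2}^{-1}\phi_{v_1}$; axiom \ref{PC2} then expresses each whiskered $\phi_{C'}\star\alpha_i$, $\phi_{C'}\star\beta_j$ through coherence cells of $\phi$, which both makes them invertible and ties $\alpha,\beta$ to the $\el dF$-level data. A further point your sketch omits: verifying \eqref{eq:LL} requires inverting the comparison arrows $\square_1,\square_2$ in $FC$, whose invertibility the paper deduces from the requirement that the composite right leg of the common roof lies in $\cc{W}$, via a co-Cartesianness argument (Propositions \ref{prop:basicpropCartarrows} and \ref{prop:onesided2for3Cartarrows}). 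Without these two ingredients the claimed ``if and only if'' between roof-equivalence and homotopy --- which is also what makes the homotopy relation an equivalence relation and $\Colim_{\mathcal{B}}F$ a well-defined category --- is not established.
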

\begin{proof}
The category $(\pi_0 \el d F)[\cc{W}^{-1}]$, which we know from Corollary \ref{coro:colimit-cat} to be the pseudo-colimit of $F$, can be rather complicated to describe.
Indeed, two quotient-like constructions interfere: an identification of arrows ($\pi_0$) and a free inversion of arrows (localization by calculus of fractions).
In order to sort out this interplay, we introduce the above category $\Colim F$ which can be seen as a ``sub-category'' of $(\pi_0 \el d F)[\cc{W}^{-1}]$: it has the same objects, but \emph{a priori} fewer arrows and \emph{a priori} fewer homotopies between these arrows.
The inclusion of $\Colim F \hookrightarrow (\pi_0 \el d F)[\cc{W}^{-1}]$ is denoted by $K$ in the proof.
The notion of identities and composition chosen for $\Colim F$ are sent to the corresponding notions in $(\pi_0 \el d F)[\cc{W}^{-1}]$ by this inclusion (as detailed in Remark \ref{rem:oncompositioninthepseudocolim}).
As we later prove that existence of homotopies in $(\pi_0 \el d F)[\cc{W}^{-1}]$ implies existence of homotopies in $\Colim F$, we get for free that  $\Colim F$ as described in the proposition is a well-defined category.
The proof is then structured as follows: we first unfold the definition of $(\pi_0 \el d F)[\cc{W}^{-1}]$, then we construct the functors $H: \Colim F \leftrightarrow (\pi_0 \el d F)[\cc{W}^{-1}]\colon K$ and finally we prove that the pair forms an isomorphism of categories.

We first construct the category $(\pi_0 \el d F)[\cc{W}^{-1}]$ as a category of fractions.
Its objects are then pairs $(A,x)$, just as in the Proposition.
Its arrows $(A,x) \to (B,y)$ are equivalence classes of roofs as follows:

\begin{enumerate}[align=left,leftmargin=*]
    \item[\textbf{Roof:}] 
    $\vcenter{\xymatrix@R=.5em{
    & (C,z) \ar@{<-}[rd]^{(v,\nu)}|{\circ}
        \ar@{<-}[ld]_{(u,\mu)} \\
    (A,x)
    && (B,y)
    }}$ with $C,u,v$ as in the Proposition, $z\colon FC$,  $\mu\colon Fu(x) \to z$ an arrow of $FC$, and $\nu\colon Fv(y) \simeq z$  an isomorphism of $FC$.
    
    \item[\textbf{Equivalence of roofs:}]
    Two roofs are in the same class, as usual, if they can be made part of a third common roof.
    That is, if there exists a cospan between their ``ridges''
    \[\xymatrix@R=1.5em{
        && (C',z') \ar@{<-}[dr]^{(w_2',\lambda)}
            \ar@{<-}[dl]_{(w_1',\kappa)} \\
        & (C_1,z_1) \ar@{<-}[drrr]_{(v_1,\nu_1)}|{\circ}
            \ar@{<-}[dl]_{(u_1,\mu_1)}
        && (C_2,z_2) \ar@{<-}[dr]^{(v_2,\nu_2)}|{\circ}
            \ar@{<-}[dlll]^{(u_2,\mu_2)}|!{[ll];[dr]}\hole \\
        (A,x)
        &&&& (B,y)
    }\]
    in $\el d F$ such that the equivalence class of the right leg is in $\mathcal{W}$ and such that these two equations hold in $\pi_0 \el d F$:
    \begin{equation}\label{eq:twoeqinpi0}
        [(w_1', \kappa)(u_1,\mu_1)]
        =
        [(w_2', \lambda)(u_2,\mu_2)]
        \qquad\qquad
        [(w_1', \kappa)(v_1,\nu_1)]
        =
        [(w_2', \lambda)(v_2,\nu_2)]
        \in \mathcal{W}.
    \end{equation}
\end{enumerate}
\noindent
{\bf Construction of $H$.} We construct an assignement $H\colon (\pi_0 \el d F)[\cc{W}^{-1}] \to \Colim_{\mathcal{B}}F$ on objects and arrows, which is the identity on objects, and is defined on the arrows as follows. 
Note that starting with a roof $((C,z),(u,\mu),(v,\nu))$ we can construct a premorphism $(C,u,v,\xi)$ by defining $\xi = \nu^{-1} \mu$.
We will show now that two equivalent roofs yield homotopic premorphisms (with the notion of homotopy defined in the statement of the proposition), so that $H$ is well defined.

Let's describe explicitly what \eqref{eq:twoeqinpi0} means.
By the first of these two equations, in $\el d F$ we have arrows $(h_0,\sigma_0),\dots, (h_n,\sigma_n)\colon (A,x) \to (C',z')$ such that $(h_0, \sigma_0) = (w_1',\kappa)(u_1,\mu_1)$ and $(h_n, \sigma_n) = (w_2',\lambda)(u_2,\mu_2)$, and 2-cells $\{\alpha_i\}$ linking those arrows together in a ``zig-zag" of length $n$.
Since these are 2-cells in $\el d F$, we have a commutative diagram in $FC'$ given on the left of \eqref{zig-zag-in-C-prime} below (the orientation of the arrows $\{\alpha_i\}$ is not fixed, so their direction in the diagram is arbitrary). 

Looking at the second equation in \eqref{eq:twoeqinpi0}, by definition, we have an arrow $(k,\tau)$ of $\cc{W}$ such that $[(w_1',\kappa)(v_1,\nu_1)] = [(k,\tau)] = [(w_2', \lambda)(v_2,\nu_2)]$, hence with $\tau$ an invertible arrow of $FC'$.
These two equalities lead to two zig-zags of 2-cells as above, and by concatenating them we have arrows $(k_0,\tau_0),\dots, (k_m,\tau_m)$ such that $(k_0,\tau_0) = (w_1',\kappa)(v_1,\nu_1)$ and $(k_m,\tau_m) = (w_2',\lambda)(v_2,\nu_2)$, and 2-cells $\{\beta_j\}$ linking them such that
the diagram on the right of \eqref{zig-zag-in-C-prime} below is commutative, with the further assumption that one of the $\tau_j$ is invertible.
\begin{equation}\label{zig-zag-in-C-prime}
    \vcenter{\xymatrix{
        F(w_1'u_1)(x) \ar[ddrrr]^{\sigma_0}
            \ar[d]_{(F\alpha_1)_x} \\
        F(h_1)(x) \ar[drrr]^{\sigma_1} \\
        F(h_2)(x) \ar[rrr]^{\sigma_2}
            \ar[u]^{(F\alpha_2)_x}
        &&& z \\
        \vdots \ar@{<-}[u]^{(F\alpha_3)_x}
            \ar[d]_{(F\alpha_n)_x}
            \ar@{}[urrr]|(0.4){\vdots} \\
        F(w_2'u_2)(x) \ar[uurrr]_{\sigma_n}
    }}
    \qquad\qquad
    \vcenter{\xymatrix{
        &&& F(w_1'u_1)(y) \ar[ddlll]_{\tau_0}
            \ar@{<-}[d]^{(F\beta_1)_y} \\
        &&& F(k_1)(y) \ar[dlll]_{\tau_1}
            \ar[d]^{(F\beta_2)_y} \\
        z
        &&& F(k_2)(y) \ar[lll]_{\tau_2} \\
        &&& \vdots \ar[u]_{(F\beta_3)_y}
            \ar[d]^{(F\beta_m)_y}
            \ar@{.>}[ulll]|{\simeq}^(0.37){\scriptscriptstyle\vdots}_(0.4){\scriptscriptstyle\vdots} \\
        &&& F(w_2'u_2)(y) \ar[uulll]^{\tau_m}
    }}
\end{equation}

We now consider all the connected data given by the 2-cells $\{\alpha_i\}$, $\{\beta_j\}$ in the pseudofiltered bicategory $\mathcal{B}$, as on the left of the diagram below, and we take a pseudo-cocone $(C,\phi)$ on it.
We will define a homotopy using the data of the pseudo-cocone.
More precisely (see the diagram below on the right), we define $w_1 = \phi_{C_1}$, $w_2 = \phi_{C_2}$, $\alpha = \phi_{u_2}^{-1}\phi_{u_1}\colon w_1u_1 \simeq w_2u_2\colon A \to C$ and $\beta = \phi_{v_2}^{-1}\phi_{v_1}\colon w_1v_1 \simeq w_2v_2\colon B \to C$
\[
    \vcenter{\xymatrix{
        && C^{\prime} \\
        & C_1 \ar[ur]^{w_1^{\prime}}
        && C_2 \ar[ul]_{w_2^{\prime}} \\
        A \ar[ur]^{u_1}|{}="1"
            \ar@/_2pc/[urrr]|(0.61)\hole_(0.4){u_2}="5"
            \ar@{.}@/_1pc/[uurr]|(0.27){}="2"
            \ar@{.}@/_2pc/[uurr]|(0.28){}="3"
            \ar@{.}@/_3pc/[uurr]|(0.29){}="4"
        &&&& B \ar[ul]_{v_2}|{}="10"
            \ar@/^2pc/[ulll]^(0.4){v_1}="6"
            \ar@{.}@/^1pc/[uull]|(0.27){}="9"
            \ar@{.}@/^2pc/[uull]|(0.28){}="8"
            \ar@{.}@/^3pc/[uull]|(0.29){}="7"
        \ar@{=>}"1";"2"
        \ar@{=>}"3";"2"
        \ar@{.}"3";"4"^{\alpha_i}
        \ar@{=>}"4";"5"
        \ar@{=>}"6";"7"
        \ar@{.}"8";"7"_{\beta_j}
        \ar@{=>}"8";"9"
        \ar@{=>}"10";"9"
    }}
    \quad\leadsto\quad
    \vcenter{\xymatrix{
        && C \\
        & C_1 \ar[ur]^{w_1}
        && C_2 \ar[ul]_{w_2} \\
        A \ar@{}@<-2pc>[uurr]|(0.28){}="3"
            \ar@{}@<-3pc>[uurr]|(0.29){}="4"
            \ar[ur]^{u_1}
            \ar@/_2pc/[urrr]|(0.61)\hole_(0.4){u_2}
        &&&& B \ar[ul]_{v_2}
            \ar@/^2pc/[ulll]^(0.4){v_1}
            \ar@{}@<2pc>[uull]|(0.28){}="8"
            \ar@{}@<3pc>[uull]|(0.29){}="7"
        \ar@{}"3";"4"^{\simeq \alpha}
        \ar@{}"8";"7"_{\simeq \beta}
    }}
\]

We will now show that equation \eqref{eq:LL} holds for $\alpha$ and $\beta$.
As $\el d F \to \cc{B}$ is a co-fibration, we can lift $\phi_{C'}$ to the co-Cartesian arrow $(\phi_{C'},1_{F\phi_{C'}(z')})$ (as described in Buckley's cleavage introduced in \cite[Prop. 3.3.4]{Buc14}).
Furthermore, as $\el d F \to \cc{B}$ is discrete, there exist unique lifts of $\phi_{w_1'}^{-1}$ and $\phi_{w_2'}^{-2}$, which are necessarily co-Cartesian and invertible, that can be denoted by the same names.
We then have the following diagrams:
\begin{equation} \label{eq:diagsinElF}
    \vcenter{\xymatrix@C=.5pc{
        && (C,F\phi_{C'}(z')) \\
        && (C^{\prime},z') \ar[u]|{(\phi_{C'},1_{F\phi_{C'}(z')})} \\
        & (C_1,z_1) \ar[ur]^{(w_1^{\prime},\kappa)}
            \ar@/^2pc/[uur]^{(w_1,\square_1)}
            \ar@{}[uur]|{\Mreq{\phi_{w_1'}^{-1}}}
        && (C_2,z_2) \ar[ul]_{(w_2^{\prime},\lambda)}
            \ar@/_2pc/[uul]_{(w_2,\square_2)}
            \ar@{}[uul]|{\Mreq{\phi_{w_2'}}} \\
        (A,x) \ar[ur]^{(u_1,\mu_1)}|{}="1"
            \ar@/_2pc/[urrr]_(0.4){(u_2,\mu_2)}="5"
            \ar@{.>}@/_1pc/[uurr]|(0.27){}="2"
            \ar@{.>}@/_2pc/[uurr]|(0.28){}="3"
            \ar@{.>}@/_3pc/[uurr]|(0.29){}="4"
        \ar@{=>}"1";"2"
        \ar@{=>}"3";"2"
        \ar@{.}"3";"4"^{\alpha_i}
        \ar@{=>}"4";"5"
    }}
    \quad
    \vcenter{\xymatrix@C=.5pc{
        & (C,F\phi_{C'}(z')) \\
        & (C^{\prime},z') \ar[u]|{(\phi_{C'},1_{F\phi_{C'}(z')})} \\
        (C_1,z_1) \ar[ur]^{(w_1^{\prime},\kappa)}
            \ar@/^2pc/[uur]^{(w_1,\square_1)}
            \ar@{}[uur]|{\Mreq{\phi_{w_1'}^{-1}}}
        && (C_2,z_2) \ar[ul]_{(w_2^{\prime},\lambda)}
            \ar@/_2pc/[uul]_{(w_2,\square_2)}
            \ar@{}[uul]|{\Mreq{\phi_{w_2'}}} \\
        &&& (B,y) \ar[ul]_{(v_2,\nu_2)}|{}="10"
            \ar@/^2pc/[ulll]^(0.4){(v_1,\nu_1)}="6"
            \ar@{.>}@/^1pc/[uull]|(0.27){}="9"
            \ar@{.>}@/^2pc/[uull]|(0.28){}="8"
            \ar@{.>}@/^3pc/[uull]|(0.29){}="7"
        \ar@{=>}"6";"7"
        \ar@{.}"8";"7"_{\beta_j}
        \ar@{=>}"8";"9"
        \ar@{=>}"10";"9"
    }}
\end{equation}
where $\square_1$ is defined by the lifting of the co-Cartesian 2-cell $\phi_{w_1'}^{-1}$, or more explicitly (with notation as in Remark \ref{rem:variance-duality}),
\[
    Fw_1(z_1)
    \xr{F\phi_{w_1^{\prime}}^{-1}(z_1)}
    F(\phi_{C'}w_1')(z_1)
    \xr{(F^2_{\phi_{c^{\prime}},w_1^{\prime}})_{z_1}}
    F(\phi_{C'})F(w_1')(z_1)
    \xr{F\phi_{c^{\prime}}(\kappa)}
    F\phi_{C^{\prime}}(z')
\]
and similarly $\square_2$ is defined by replacing all appearances of the subindex $1$ by $2$, and $\kappa$ by $\lambda$ in the above.
Now we note that, since $(C,\phi)$ is a pseudo-cocone, in particular it satisfies axiom \ref{PC2} in Definition \ref{def:pseudo-cocone} for each of the 2-cells $\alpha_i$ and $\beta_j$.
This means that, once whiskered by $\phi_{C^{\prime}}$, these 2-cells can be expressed with coherence 2-cells of $\phi$.
Firstly, this implies that all the whiskerings $\phi_{C^{\prime}} \star \alpha_i$ and $\phi_{C^{\prime}} \star \beta_j$ are invertible.
Secondly, this implies that the pastings of the diagrams in \eqref{eq:diagsinElF} can be computed and are the liftings of respectively $\alpha$ and $\beta$ to $\el d F$.

The diagrams in \eqref{eq:diagsinElF}, and the observation above, show that $(\phi_{C^{\prime}}, 1_{F\phi_{C^{\prime}}(z')})(w_1',\kappa)(v_1,\nu_1) \simeq (\phi_{C^{\prime}}, 1_{F\phi_{C^{\prime}}(z')})(k_j,\tau_j)$ for all $j$, and we consider it for the value of $j$ such that $\tau_j$ is invertible (that exists as shown above \eqref{zig-zag-in-C-prime}).
As both arrows on the right hand side of this equality are co-Cartesian arrows, using in turn Propositions \ref{prop:basicpropCartarrows} and \ref{prop:onesided2for3Cartarrows} we conclude that so is $(\phi_{C^{\prime}}, 1_{F\phi_{C^{\prime}}(z')})(w_1',\kappa)$.
By definition of the co-fibration $\el d F$ (see Proposition \ref{prop:grothendieckyieldscofibration}), this means that $F\phi_{C^{\prime}}(\kappa)$ is invertible (since it is an equivalence in a discrete bicategory), and hence so is the arrow $\square_1$ as defined above.

Now, by definition of the 2-cells of $\el d F$ (as in Remark \ref{rem:variance-duality}), the fact that the pastings of the diagrams in \eqref{eq:diagsinElF} are respectively $\alpha$ and $\beta$ means that we have the following two commutative diagrams in $FC$:
\[
    \xymatrix{
        F(w_1u_1)(x) \ar[dd]_{(F\alpha)_x}
            \ar[r]^{\square_1^u}
        & Fw_1(z_1) \ar[dr]^{\square_1} \\
        && F\phi_{C^{\prime}}(z') \\
        F(w_2u_2)(x) \ar[r]_{\square_2^u}
        & Fw_2(z_2) \ar[ur]_{\square_2} 
    }
    \qquad 
    \xymatrix{
        & Fw_1(z_1) \ar[dl]_{\square_1}
        & F(w_1v_1)(y) \ar[dd]^{(F\beta)_y}
            \ar[l]_{\square_1^v} \\
        F\phi_{C^{\prime}}(z') \\
        & Fw_2(z_2) \ar[ul]^{\square_2}
        & F(w_2v_2)(y) \ar[l]^{\square_2^v}
    }
\]
with $\square_1^u$ (resp. with $1 \leftrightarrow 2$, $u\leftrightarrow v$, $x\leftrightarrow y$ and $\mu \leftrightarrow \nu$) defined as the composition
\[
    F(w_1u_1)(x)
    \xr{(F^{-2}_{w_1,u_1})_x}
    F(w_1)F(u_1)(x)
    \xr{Fw_1(\mu_1)}
    Fw_1(z_1).
\]

We can then combine the two commutative diagrams above in the following commutative diagram, where all the arrows are invertible:
\[
    \vcenter{\xymatrix{
        F(w_1u_1)(x) \ar[r]^{(F\alpha)_x}
            \ar[d]_{\square_1^u}
        & F(w_2u_2)(x) \ar[d]^{\square_2^u} \\
        Fw_1(z_1) \ar[r]_{\square_2^{-1} \circ \square_1}
        & Fw_2(z_2) \\
        F(w_1v_1)(y) \ar[u]^{\square_1^v}
            \ar[r]_{(F\beta)_y}
        & F(w_2v_2)(y) \ar[u]_{\square_2^v}
    }}
    \quad \leadsto \quad 
    \vcenter{\xymatrix{
        F(w_1u_1)(x) \ar[r]^{(F\alpha)_x}
            \ar[d]_{\square_1^u}
        & F(w_2u_2)(x) \ar[d]^{\square_2^u} \\
        Fw_1(z_1) \ar[r]_{\square_2^{-1} \circ \square_1}
        & Fw_2(z_2) \\
        F(w_1v_1)(y) \ar@{<-}[u]^{\square_1^{v-1}}
            \ar[r]_{(F\beta)_y}
        & F(w_2v_2)(y) \ar@{<-}[u]_{\square_2^{v-1}}
    }}
\]

Taking $\xi_1 = \nu_1^{-1}\mu_1$ and $\xi_2 = \nu_2^{-1}\mu_2$, the diagram on the right above is precisely the diagram \eqref{eq:LL}.

\noindent
{\bf Construction of $K$.} We now construct an assignement $K\colon \Colim_{\mathcal{B}}F \to  (\pi_0 \el d F)[\cc{W}^{-1}]$ on objects and arrows, which is the identity on objects, and maps a premorphism $(C, u, v, \xi)$ to the roof 
\[\vcenter{\xymatrix@R=.5em{
    & (C,Fv(y))\\
    (A,x) \ar[ur]^(0.3){(u,\xi)}
    && (B,y) \ar[ul]|(0.4){\circ}_(0.3){(v,1_{Fv(y)})}
}},\]
Note that $K$ is well-defined since any homotopy $(C, w_1, w_2, \alpha,\beta)$ as in the proposition yields the roof
\[\xymatrix@R=1.5em@C=0.5em{
    && (C,Fw_2\circ Fv_2(y)) \ar@{<-}[dr]^(0.7){\quad (w_2,1_{Fw_2\circ Fv_2(y)})}|{\circ}
        \ar@{<-}[dl]_{(w_1,\kappa)} \\
    & (C_1,Fv_1(y)) \ar@{<-}[drrr]_{(v_1,1_{Fv_1(y)})}|{\circ}
        \ar@{<-}[dl]_{(u_1,\xi_1)}
    && (C_2,Fv_2(y)) \ar@{<-}[dr]^(0.7){(v_2,1_{Fv_2(y)})}|(0.6){\circ}
        \ar@{<-}[dlll]^{(u_2,\xi_2)}|!{[ll];[dr]}\hole \\
    (A,x)
    &&&& (B,y)
}\]
where $\kappa = F^{-2}_{w_2,v_2}(y) \circ (F\beta)_y \circ F^2_{w_1,v_1}(y)$.

\noindent
{\bf Isomorphism of Categories.} Starting with a premorphism, and applying $K$ and $H$ consecutively, we get the same premorphism we started with. 
In particular, note that two premorphisms are homotopic if and only if the associated roofs are equivalent, so the homotopy relation is indeed an equivalence relation.
Starting instead with a roof and applying $H$ and $K$ consecutively, we get a new roof which is in the same class, furthermore they have not only a common roof but what's called in \cite[Rem. 3.6]{Fri11} an \textit{elementary equivalence}:
\[\xymatrix{
    & (C,z) \ar@/^1pc/[r]^{[(1_c,\nu^{-1} \circ (F^{-0}_c)_z)]}
        \ar@{<-}[drr]_{[(v,\nu)]}|{\circ}
        \ar@{<-}[dl]_{[(u,\mu)]}
    & (C,Fv(y)) \ar@{<-}[dr]^{[(v,1_{Fv(y)})]}|{\circ}
        \ar@{<-}[dll]^{[(u,\nu^{-1}\mu)]} \\
    (A,x)
    &&& (B,y)
}\]
This establishes the existence of  the isomorphism of categories
\[\Colim F \simeq (\pi_0 \el d F)[\cc{W}^{-1}]\]
\end{proof}

\begin{remark}[Composition]\label{rem:oncompositioninthepseudocolim}
The formula in \eqref{eq:compositioninthepseudocolim} for the composition of arrows in the pseudo-colimit is independent of the choices of $f,g$, and $\gamma$.
Indeed, this formula is no other than the one coming from the composition of roofs in the category of fractions, using the assignements $K$ and $H$ constructed in the proof of Proposition \ref{prop:filteredcolimit}.
We make this explicit as follows: given $[(C_1, u_1, v_1, \xi_1)], [(C_2, u_2, v_2, \xi_2)]$ as in \eqref{eq:compositioninthepseudocolim} we apply $K$ and we have two roofs,
\[\vcenter{\xymatrix@C=3pc{
    & (C_1,Fv_1(y))
    && (C_2,Fv_2(z)) \\
    (A,x) \ar[ur]^(0.3){(u_1,\xi_1)}
    && (B,y) \ar[ul]|(0.4){\circ}^(0.3){(v_1,1_{Fv_1(y)})}
        \ar[ur]_(0.3){(u_2,\xi_2)}
    && (C,z) \ar[ul]|(0.4){\circ}_(0.3){(v_2,1_{Fv_2(z)})}
}}\]
and $f,g,$ and $\gamma$ provide the 2-cell $\gamma$ in $\el d F$ in the diagram below, so that the square commutes in $\pi_0 \el d F$ 
\begin{equation}\label{eq:compositionofroofs}\vcenter{\xymatrix@C=3pc{
    && (D,FgFv_2(z)) \ar@{}[dd]|{\gamma} \\
    & (C_1,Fv_1(y)) \ar[ur]^(0.3){(f,\square)}
    && (C_2,Fv_2(z)) \ar[ul]|(0.4){\circ}_(0.3){(g,1_{FgFv_2(z)})} \\
    (A,x) \ar[ur]^(0.3){(u_1,\xi_1)}
    && (B,y) \ar[ul]|(0.4){\circ}^(0.3){(v_1,1_{Fv_1(y)})}
        \ar[ur]_(0.3){(u_2,\xi_2)}
    && (C,z) \ar[ul]|(0.4){\circ}_(0.3){(v_2,1_{Fv_2(z)})}
}}\end{equation}

The arrow $\square$, that exists by the fibration properties of $\el d F$, can be defined as the composition
\[
    FfFv_1(y)
    \xr{(F^{2}_{f,v_1})_y}
    F(fv_1)(y)
    \xr{(F\gamma)_y}
    F(gu_2)(y)
    \xr{(F^{-2}_{g,u_2})_y}
    FgFu_2(y)
    \xr{Fg(\xi_2)}
    FgFv_2(z).
\]
The formula in \eqref{eq:compositioninthepseudocolim} is the one obtained by applying $H$ to the roof in \eqref{eq:compositionofroofs} formed by the four outer arrows.
\end{remark}

\begin{remark}[Elementary Homotopy]\label{rem:elemhomocolimit}
As in Fritz's work \cite[Rem. 3.6]{Fri11}, we have a notion of elementary homotopy here too: a triple $(w,\alpha,\beta)\colon(c_1, u_1, v_1, \xi_1)\equiv (C_2, u_2, v_2, \xi_2)$ where $w\colon C_1 \to C_2$ is an arrow and $\alpha\colon wu_1 \simeq u_2$ and $\beta\colon wv_1 \simeq v_2$ are invertible 2-cells such that
\[\xymatrix{
    Fw\circ Fu_1 (x) \ar[r]^{(F^2_{w, u_1})_x}
        \ar[d]_{Fw(\xi_1)}
    & F(wu_1)(x) \ar[r]^{(F\alpha)_x}
    & F(u_2)(x) \ar[d]^{\xi_2} \\
    Fw\circ Fv_1 (y) \ar[r]_{(F^2_{w, v_1})_y}
    & F(wv_1)(y)\ar[r]_{(F\beta)_y}
    & F(v_2)(y)
}\]
Those elementary homotopies then generate the whole equivalence relation.
In particular, for any premorphism $(C,u,v,\xi)$ and any arrow $w\colon C \to C'$, we have an elementary homotopy generated by $w$:
\[
    (w,1_{wu},1_{wv})\colon
        (C,u,v,\xi)
        \equiv
        (C',wu,wv,\widehat{Fw(\xi)})
\]
where $\widehat{Fw(\xi)} = (F^2_{w,v})_{y} \circ Fw(\xi) \circ (F^{-2}_{w,u})_{x}$.
\end{remark}

\section{Two Basic Properties of Bicategories of Fractions}\label{sec:homcat}
We show in this section the generalization of two classical results from \cite{GZ67} to bicategories: 
\begin{enumerate}
    \item
    In \cite{GZ67} it is shown that each hom-set of the category of fractions can be constructed as a filtered colimit of sets, that is indexed over a slice category.
    We show in Section \ref{subsec:generalizingGZ} that for a bicategory of fractions one has a similar diagram \eqref{eq:thediagramforthehoms} given by a $\Cat$-valued pseudo-functor, whose domain is filtered (see Lemma \ref{lem:strongWslicecofiltered}), and finally that the hom-categories of the bicategory of fractions can be constructed as the pseudo-colimit of this pseudo-functor (see Proposition \ref{prop:homsascolim}). 

    \item
    In Section \ref{subsec:exactness} we generalize to bicategories another basic result from \cite[Chapter I.3]{GZ67}: the fact that the localization by fractions is exact.
\end{enumerate}

We consider in this section a family $\cc{W}$ of arrows of a bicategory $\cc{B}$, containing the identities and closed under composition and invertible 2-cells, whose arrows we denote by $\wto$.

\subsection{Hom-categories of the localization are filtered colimits}\label{subsec:generalizingGZ}
\begin{definition}[Slice $\mathcal{W}/A$]\label{def:Wslice}
For $A\colon\mathcal{B}$, we have a bicategory $\mathcal{W}/A$ defined as the full sub-bicategory of the pseudo-slice bicategory $\mathcal{B} / A$, whose objects are given by the arrows in $\mathcal{W}$.
More explicitly, it has
\begin{enumerate}[align=left, leftmargin=*]
    \item[\textbf{Objects:}]
    $(C, w)$, where $C\colon\mathcal{B}$ and $w\colon C \wto A$ in $\mathcal{W}$.
    
    \item[\textbf{Arrows:}]
    $(f,\alpha)\colon (C_1, w_1) \to (C_2, w_2)$, where $f\colon C_1 \to C_2$ is an arrow of $\mathcal{B}$ and $\alpha\colon w_1 \Rightarrow w_2f$ is an invertible 2-cell.

    \item[\textbf{2-cells:}]
    $\xi\colon (f_1,\alpha_1) \to (f_2,\alpha_2)$ where $\xi\colon f_1 \Rightarrow f_2$ is a 2-cell of $\mathcal{B}$ such that we have the equality of pasting diagrams
    \[\xymatrix{
        C_1 \ar[rr]_{f_1}^{}="1"
            \ar[dr]|{\circ}_{w_1}
            \ar@{}[drr]|{\Mreq{\alpha_1}}
            \ar@/^1.5pc/[rr]^{f_2}_{}="2"
            \ar@{}@<.75pc>[rr]|{\xi \, \Uparrow}
        && C_2 \ar[dl]|{\circ}^{w_2}
        & =
        & C_1 \ar[rr]^{f_2}
            \ar[dr]|{\circ}_{w_1}
            \ar@{}[drr]|{\Mreq{\alpha_2}}
        && C_2 \ar[dl]|{\circ}^{w_2} \\
        & A
        &&&& A
        &
    }\]
\end{enumerate}
$\mathcal{W}/A$ comes equipped with a forgetful pseudo-functor (that is in fact a strict functor) $U\colon \mathcal{W}/A \to \mathcal{B}$.
\end{definition}

\begin{remark}\label{rem:sliceasfibration}
We can also define the lax slice bicategory $\mathcal{B} // A$, just as above, but without asking for the 2-cell ``$\alpha$'' appearing in the arrows to be invertible.
This is a particular case of a lax comma bicategory of a diagram $\mathcal{C} \mr{F} \mathcal{B} \ml{G} \mathcal{D}$, as constructed for example in \cite[4.2.1]{Buc14} (take $F=1_{\mathcal{B}}$, $\mathcal{D}=\mathbf{1}$, and $G=A$).
It follows from  \cite[4.2.5]{Buc14} that $U\colon \mathcal{B}//A \to \mathcal{B}$ is a fibration.
Note that the Cartesian arrows in $\mathcal{B}//A$ are precisely those for which the 2-cell $\alpha$ is invertible.
Also note that, since we assume $\cc{W}$ to be closed:

\smallskip
\noindent {\bf ($\star$)}
    given any Cartesian arrow $(f,\alpha)\colon (C_1, w_1) \to (C_2, w_2)$ of $\mathcal{B}//A$, if $f$ and $w_2$ are in $\cc{W}$, then so is $w_1$ (that is, $(C_1, w_1)$ is an object of $\mathcal{W}/A$).
\end{remark}

\begin{lemma}[Cofiltered Slices]\label{lem:strongWslicecofiltered}
Let $A \colon \mathcal{B}$.
If $\mathcal{W}$ satisfies right fractions, then $\mathcal{W}/A$ is  a cofiltered bicategory as in Definition \ref{def:filtered}.
\end{lemma}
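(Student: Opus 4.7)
The plan is to deduce the result from the Lifting Fractions Lemma \ref{lem:1-fibrationsliftfractions} applied to the fibration $U\colon \mathcal{B}//A \to \mathcal{B}$ of Remark \ref{rem:sliceasfibration}, combined with the ``fractions implies cofilteredness'' principle of Remark \ref{rem:whenfractionsimplycofiltered}. The strategy is thus: first lift the calculus of right fractions from $\mathcal{B}$ to $\mathcal{B}//A$, restrict it to $\mathcal{W}/A$, and then use that $(A,1_A)$ is biterminal in $\mathcal{W}/A$ with all arrows into it belonging to the lifted family.

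First, I would single out the family $\widetilde{\mathcal{W}}$ of arrows $(f,\alpha)\colon (C_1,w_1) \to (C_2,w_2)$ of $\mathcal{W}/A$ for which $f \in \mathcal{W}$. Since every arrow of $\mathcal{W}/A$ already has invertible structural 2-cell $\alpha$, these are exactly the Cartesian arrows of $U$ over $\mathcal{W}$ whose endpoints both lie in $\mathcal{W}/A$. The closure axioms for $\widetilde{\mathcal{W}}$ inherit from those of $\mathcal{W}$. By Lemma \ref{lem:1-fibrationsliftfractions} the Cartesian arrows over $\mathcal{W}$ satisfy right fractions in $\mathcal{B}//A$, and I would then check that this restricts to a calculus of right fractions for $\widetilde{\mathcal{W}}$ inside the full sub-bicategory $\mathcal{W}/A$. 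The key observation is that every new object produced by the axioms \ref{0-Frc}, \ref{1-Frc}, \ref{2-Frc} is the source of a Cartesian lift over some arrow of $\mathcal{W}$, and by ($\star$) of Remark \ref{rem:sliceasfibration} such a lift with target in $\mathcal{W}/A$ automatically has source in $\mathcal{W}/A$; fullness of $\mathcal{W}/A$ in $\mathcal{B}//A$ then takes care of the accompanying arrows and 2-cells.

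Next, I would observe that $(A, 1_A)$ is biterminal in $\mathcal{W}/A$. For any $(C, w)$, the pair $(w, l_w^{-1})$ furnishes an arrow $(C,w) \to (A, 1_A)$, and given two such arrows $(f_1, \alpha_1), (f_2, \alpha_2)$, the composite $\alpha_2 \circ \alpha_1^{-1}$ together with the left unitor of $\mathcal{B}$ determines a unique invertible 2-cell $\xi\colon f_1 \simeq f_2$ satisfying the compatibility of Definition \ref{def:Wslice}, so $\mathcal{W}/A((C,w), (A,1_A))$ is a contractible groupoid. Moreover, any arrow $(f, \alpha)\colon (C,w) \to (A,1_A)$ has $f \simeq w$ via $\alpha$ composed with the left unitor, hence $f \in \mathcal{W}$ by closure under invertible 2-cells; that is, $\widetilde{\mathcal{W}}$ contains all arrows into $(A, 1_A)$. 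Applying Remark \ref{rem:whenfractionsimplycofiltered} to the pair $(\mathcal{W}/A, \widetilde{\mathcal{W}})$, and noting that $\mathcal{W}/A$ is nonempty since $(A,1_A)$ itself is an object, yields that $\mathcal{W}/A$ is cofiltered.

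The main obstacle is the restriction step: one has to verify that the completions produced inside $\mathcal{B}//A$ by the Lifting Fractions Lemma really land in $\mathcal{W}/A$, which reduces to ($\star$) but must be checked once for each of \ref{0-Frc}, \ref{1-Frc}, \ref{2-Frc}. Everything else is a routine diagrammatic translation; the hypothesis that $\mathcal{W}$ is closed under composition and invertible 2-cells does all the work in identifying $\widetilde{\mathcal{W}}$ with the restriction of the Cartesian arrows over $\mathcal{W}$ and in showing that the arrows into the biterminal object sit in $\widetilde{\mathcal{W}}$.
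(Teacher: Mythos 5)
Your proposal follows essentially the same route as the paper: apply the Lifting Fractions Lemma to the fibration $U\colon \mathcal{B}//A \to \mathcal{B}$, restrict the lifted calculus to $\mathcal{W}/A$ via the observation \textbf{($\star$)}, and conclude with Remark \ref{rem:whenfractionsimplycofiltered} using the biterminal object $(A,1_A)$. The only step where your wording is too loose is the claim that ``fullness of $\mathcal{W}/A$ in $\mathcal{B}//A$ takes care of the accompanying arrows'': $\mathcal{W}/A$ is only \emph{locally} full in the lax slice $\mathcal{B}//A$, so for the arrow $h$ produced by \ref{0-Frc} one must still check that its 2-cell component is invertible (equivalently, that $h$ is Cartesian); the paper does this via Propositions \ref{prop:basicpropCartarrows} and \ref{prop:onesided2for3Cartarrows}, since $wh \simeq fu$ is Cartesian and $w$ is Cartesian.
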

\begin{proof} 
In view of Remark \ref{rem:sliceasfibration}, applying Lemma \ref{lem:1-fibrationsliftfractions} to the fibration $U\colon \mathcal{B}//A \to \mathcal{B}$, we get that the family $\cc{C}_\cc{W}$ of Cartesian arrows over $\cc{W}$ (that is the arrows $(f,\alpha)$ as in Definition \ref{def:Wslice} such that $f$ is in $\cc{W}$ and $\alpha$ is invertible) satisfies fractions in $\mathcal{B}//A$.
We show how this implies that this same family $\cc{C}_\cc{W}$ also satisfies fractions when restricted to $\mathcal{W}/A$ (note that $\mathcal{W}/A$ is locally full in $\mathcal{B}//A$).
Consider axiom \ref{0-Frc} for $\cc{C}_\cc{W}$ in $\mathcal{B}//A$:
\[\xymatrix{
    D \ar@{.>}[r]^h
        \ar@{.>}[d]_u|{\circ}
        \ar@{}[dr]|{\Mreq{\alpha}}
    & A \ar[d]^w|{\circ} \\
    C \ar[r]_f
    & B
}\]
If $A,B,C,$ and $f$ are in $\mathcal{W}/A$, then by the statement marked \textbf{($\star$)} in Remark \ref{rem:sliceasfibration}  so is $D$, and by Propositions \ref{prop:basicpropCartarrows} and \ref{prop:onesided2for3Cartarrows} so is $h$.
It also follows from \textbf{($\star$)} that $\cc{C}_\cc{W}$ satisfies the axioms \ref{1-Frc} and \ref{2-Frc} in $\mathcal{W}/A$.
Finally, since $\mathcal{W}/A$ has $(A,id_A)$ as a (bi)terminal object, and the arrows into it are in $\cc{C}_\cc{W}$, we conclude by Remark \ref{rem:whenfractionsimplycofiltered}.
\end{proof}

A proof of the following result could also be obtained using the properties of the fibration $U\colon \mathcal{B}//A \to \mathcal{B}$, without asking $\gamma$ to be invertible, but we consider a direct explicit proof of this case to be clearer.

\begin{lemma}\label{lem:liftofsquaresforU}
$U\colon \mathcal{W}/A \to \mathcal{B}$ has the following {\em lift of squares} property: given a cospan
\[
    (C,w)
    \mr{(u,\alpha)}
    (C_2,w_2)
    \ml{(v,\beta)}
    (C',w')
\]
an object $D\colon \mathcal{B}$, arrows $h \colon D \to C$, $g\colon D \to C'$ and an invertible 2-cell $\gamma\colon vg \simeq uh$ as on the left, the diagram on the left in $\mathcal{B}$ can be \emph{lifted} to a diagram in $\mathcal{W}/A$:
\[
    \vcenter{\xymatrix{
        & D \ar[ld]_h
            \ar[rd]^g \\
        C \ar[rd]_u
            \ar@{}[rr]|{\Mreq{\gamma}}
        && C' \ar[ld]^v \\
        & C_2
    }}
    \qquad \ml{U} \qquad
    \vcenter{\xymatrix{
        & (D,wh) \ar@{.>}[ld]_{(h,1_{wh})}
            \ar@{.>}[rd]^{(g,\square)} \\
        (C,w) \ar[rd]_{(u,\alpha)}
            \ar@{}[rr]|{\Mreq{\gamma}}
        && (C',w') \ar[ld]^{(v,\beta)} \\
        & (C_2,w_2)
    }}
\]
\end{lemma}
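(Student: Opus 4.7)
The plan is to construct the lift by providing an explicit formula for the invertible 2-cell $\square\colon wh \simeq w'g$; all the other data on the right-hand side of the diagram is either given or tautological. The key observation is that the requirement that $\gamma$ (viewed as a 2-cell in $\mathcal{B}$) be a 2-cell in $\mathcal{W}/A$ between the composites $(v,\beta)\circ(g,\square)$ and $(u,\alpha)\circ(h,1_{wh})$ dictates the value of $\square$ uniquely.

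Unfolding the composition rule in the slice, the second coordinate of $(u,\alpha)\circ(h,1_{wh})$ reduces to $a_{w_2,u,h}\circ(\alpha\star h)$, while that of $(v,\beta)\circ(g,\square)$ reduces to $a_{w_2,v,g}\circ(\beta\star g)\circ\square$. Imposing the slice 2-cell compatibility of $\gamma$, namely that the latter composed with $w_2\star\gamma$ equals the former, and solving for $\square$ yields
\[
    \square \;\coloneqq\; (\beta^{-1}\star g)\circ a^{-1}_{w_2,v,g}\circ (w_2\star\gamma^{-1})\circ a_{w_2,u,h}\circ(\alpha\star h).
\]
Since $\alpha$, $\beta$, $\gamma$, and the associators are all invertible, $\square$ is an invertible 2-cell, so $(g,\square)$ is a bona fide arrow of $\mathcal{W}/A$; of course $(h,1_{wh})$ is one trivially.

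With this $\square$ in hand, the three remaining checks are immediate: the slice compatibility of $\gamma$ with the two composites holds by the very definition of $\square$, the diagram manifestly lives above the given cospan under $U$ (which only forgets second coordinates), and $\gamma$ is the same 2-cell in $\mathcal{B}$ we started with. I do not anticipate any serious obstacle: the construction of $\square$ is essentially forced by the slice axioms, and the verification amounts to routine bookkeeping with associators and the functoriality of whiskering. The only point requiring minor attention is tracking signs and directions of the invertible inputs $\alpha, \beta, \gamma$ when unfolding the slice composition.
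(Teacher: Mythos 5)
Your proposal is correct and follows exactly the paper's argument: the second coordinate $\square$ of the lift of $g$ is forced by requiring $\gamma$ to satisfy the 2-cell compatibility equation of Definition \ref{def:Wslice} between the two composites, and solving that equation gives precisely your formula (the paper phrases it as pasting the left-hand 2-cell with the inverse of $\beta\star g$). No gap.
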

\begin{proof}
We simply note that the lift of $g$, whose second coordinate is denoted by $\square$ in the diagram above, can be (uniquely) defined to make $\gamma$ into a 2-cell of $\mathcal{W}/A$.
Indeed, for this to happen the equation in Definition \ref{def:Wslice} has to be satisfied, that is:
\[
    \vcenter{\xymatrix@R=3pc{
        D \ar@/^1pc/[rr]^{vg}
            \ar@{}[rr]|{\gamma \wr \Downarrow}
            \ar@/_1pc/[rr]|{uh} 
            \ar@<-1ex>[rd]_{wh}
        & \ar@{}[d]|{\Mreq{\alpha \star h}}
        & C_2 \ar@<1ex>[dl]^{w_2} \\
        & A
    }}
    \quad = \quad
    \vcenter{\xymatrix@R=3pc{
        D \ar[r]^g
            \ar[rd]_{wh}
        & C' \ar[d]|{w'}
            \ar@{}@<-1ex>[d]_(0.3){\Mreq{\square}}
            \ar@{}@<1ex>[d]^(0.3){\Mreq{\beta}}
            \ar[r]^v
        & C_2 \ar[dl]^{w_2} \\
        & A
    }}
\]
We observe that $\square$ is uniquely defined as the pasting of the 2-cell on the left and the inverse of $\beta \star g$.
\end{proof}

Let $A,B\colon\mathcal{B}$ and define a pseudo-functor
\begin{equation}\label{eq:thediagramforthehoms}
    F_A^B\colon
        (\mathcal{W}/A)^{\textnormal{op}}
        \mr{U}
        \cc{B}^{\textnormal{op}}
        \xr{\cc{B}(-,B)}
        \Cat
\end{equation}

\begin{proposition}\label{prop:homsascolim}
When $\mathcal{W}/A$ is cofiltered, the pseudo-colimit of $F_A^B$ as constructed in Proposition \ref{prop:filteredcolimit} is a  category that is isomorphic to the following one (that we denote as $\cc{B}[\cc{W}^{-1}](A,B)$, since it matches the one constructed in \cite[\S 2.3]{Pro96} as the hom-categories of the bicategory of fractions $\cc{B}[\cc{W}^{-1}]$):
\begin{enumerate}[align=left, leftmargin=*]
    \item[\textbf{Objects (arrows in $\mathcal{B}[\mathcal{W}^{-1}]$):}]
    Triples $(C, w, f)\colon A \to B$ where $C\colon\mathcal{B}$ is an object, $w\colon C \wto A$ is an arrow of $\mathcal{W}$, and $f\colon C\to B$ is an arrow
    \[\xymatrix{
        & C \ar[dl]_w|{\circ}
            \ar[dr]^f \\
        A
        && B
    }\]

    \item[\textbf{Arrows (2-cells in $\mathcal{B}[\mathcal{W}^{-1}]$):}]
    Equivalence classes of quintuples 
    \[
        (C,u,v, \alpha, \xi)\colon(C_1,\allowbreak w_1, f_1) \to (C_2, w_2, f_2)
    \]
    where $C\colon\mathcal{B}$ is an object, $u\colon C\to C_1$, $v\colon C \to C_2$ are arrows,  $\alpha\colon w_1u \simeq w_2v$ is an invertible 2-cell, and $\xi\colon f_1u \to f_2v$ is a 2-cell such that $w_1u$ (and therefore also $w_2v$) is isomorphic to\footnote{In \cite[\S 2.3]{Pro96}, since $\cc{W}$ is assumed to be closed under invertible 2-cells, the words ``isomorphic to" can be omitted, but note that this result holds without that assumption.} an arrow in $\mathcal{W}$,
    \begin{equation}\label{eq:arrowsinbicatoffrac}\xymatrix{
        && C \ar[dl]_u^{}="1"
            \ar[dr]^v_{}="3" \\
        & C_1 \ar[dl]_{w_1}|{\circ}
            \ar[drrr]|!{[dl];[rr]}\hole^(0.7){f_1}_(0.2){}="4"
        && C_2 \ar[dlll]_(0.7){w_2}|(0.7){\circ}_(0.2){}="2"
            \ar[dr]^{f_2} \\
        A
        &&&& B
        \ar@/^/@{=>}^(0.8){\wr \alpha}"1";"2"
        \ar@/^/@{=>}|(0.55){\mbox{\strut\;}}^(0.2){\xi}"4";"3"
    }\end{equation}
    
    Two such quintuples, $(C,u,v, \alpha, \xi)$, $(\tilde{C},\tilde{u},\tilde{v},\tilde{\alpha}, \tilde{\xi})\colon(C_1, w_1, f_1) \to (C_2, w_2, f_2)$ are equivalent if there exists a homotopy 
    \[
        (\bar{C}, h, \tilde{h}, \gamma, \delta) \colon
            (C,u,v, \alpha, \xi)
            \to
            (\tilde{C},\tilde{u},\tilde{v},\tilde{\alpha}, \tilde{\xi})
    \]
    where $\bar{C}\colon\mathcal{B}$ is an object, $h\colon\bar{C} \to C$, $\tilde{h}\colon\bar{C} \to \tilde{C}$ are arrows, $\gamma\colon uh \simeq \tilde{u}\tilde{h}$ and $\delta\colon vh \simeq \tilde{v}\tilde{h}$ are invertible 2-cells such that $w_1uh$ is isomorphic to an arrow in $\mathcal{W}$ and we have the equalities of pastings of 2-cells:
    \begin{equation}\label{eq:eqbetweenfractions1}\vcenter{\xymatrix{
        & \tilde{C} \ar[ddr]^{\tilde{u}}
            \ar[r]^{\tilde{v}}
        & C_2 \ar[dr]^{w_2}|{\circ}
        &&&& \tilde{C} \ar[r]^{\tilde{v}}
        & C_2 \ar[dr]^{w_2}|{\circ} \\
        \bar{C} \ar[ur]^{\tilde{h}}
            \ar[dr]_h
            \ar@{}[rr]|(0.4){\gamma \wr \Uparrow}
        &&& A \ar@{}[ll]|(0.4){\tilde{\alpha} \wr \Uparrow}
        & =
        & \bar{C} \ar[ur]^{\tilde{h}}
            \ar[dr]_h\ar@{}[rr]|(0.4){\delta \wr \Uparrow}
        &&& A \ar@{}[ll]|(0.4){\alpha \wr \Uparrow} \\
        & C \ar[r]^u
        & C_1 \ar[ur]_{w_1}|{\circ}
        &&&& C \ar[uur]^v
            \ar[r]^u
        & C_1 \ar[ur]_{w_1}|{\circ}
    }}\end{equation}
    \begin{equation}\label{eq:eqbetweenfractions2}\vcenter{\xymatrix{
        & \tilde{C} \ar[ddr]^{\tilde{u}}
            \ar[r]^{\tilde{v}}
        & C_2 \ar[dr]^{f_2}
        &&&& \tilde{C} \ar[r]^{\tilde{v}}
        & C_2 \ar[dr]^{f_2} \\
        \bar{C} \ar[ur]^{\tilde{h}}
            \ar[dr]_h 
            \ar@{}[rr]|(0.4){\gamma \wr \Uparrow}
        &&& B \ar@{}[ll]|(0.4){\tilde{\xi} \Uparrow}
        & =
        & \bar{C} \ar[ur]^{\tilde{h}}
            \ar[dr]_h\ar@{}[rr]|(0.4){\delta \wr \Uparrow}
        &&& B \ar@{}[ll]|(0.4){\xi \Uparrow} \\
        & C \ar[r]^u
        & C_1 \ar[ur]_{f_1}
        &&&& C \ar[uur]^v
            \ar[r]^u
        & C_1 \ar[ur]_{f_1}
    }}\end{equation}
    
    \item[\textbf{Identities:}]
    For $(C,w,f)$ an object, we have the identity $1_{(C,w,f)} = [(C,1_C,1_C,1_{w1_C},1_{f1_C})]$.
    
    \item[\textbf{Composition (vertical composition in $\mathcal{B}[\mathcal{W}^{-1}]$):}]
    As defined in  \cite[p.258]{Pro96}, and recalled in the proof of this proposition.
\end{enumerate}
\end{proposition}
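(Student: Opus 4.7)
The plan is to apply Proposition \ref{prop:filteredcolimit} to the pseudofiltered bicategory $(\mathcal{W}/A)^{\textnormal{op}}$ (pseudofilteredness follows from the cofilteredness of $\mathcal{W}/A$; in fact $(\mathcal{W}/A)^{\textnormal{op}}$ is filtered, since $(A, 1_A)$ is biterminal in $\mathcal{W}/A$ and hence $(\mathcal{W}/A)^{\textnormal{op}}$ is connected, so Proposition \ref{prop:connected} applies) and the pseudo-functor $F_A^B$, then to exhibit an explicit isomorphism of categories between the resulting $\Colim_{(\mathcal{W}/A)^{\textnormal{op}}} F_A^B$ and Pronk's hom-category $\mathcal{B}[\mathcal{W}^{-1}](A,B)$. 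On objects the bijection is tautological, since pairs $((C,w), f)$ with $f \in F_A^B(C,w) = \mathcal{B}(C,B)$ match Pronk's triples $(C,w,f)$.

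For arrows, I would first unfold a colimit premorphism: it consists of an apex $(C,w) \in \mathcal{W}/A$, arrows $(\underline{u}, \beta_u)\colon (C,w) \to (C_1, w_1)$ and $(\underline{v}, \beta_v)\colon (C,w) \to (C_2, w_2)$ in $\mathcal{W}/A$ (corresponding to reversed arrows in $(\mathcal{W}/A)^{\textnormal{op}}$), together with a 2-cell $\xi\colon f_1\underline{u} \to f_2\underline{v}$ in $\mathcal{B}(C, B)$. Setting $\alpha := \beta_v \cdot \beta_u^{-1}\colon w_1\underline{u} \simeq w_2\underline{v}$ produces Pronk's datum $(C, \underline{u}, \underline{v}, \alpha, \xi)$, with the closure requirement $w_1\underline{u} \in \mathcal{W}$ following from $\beta_u\colon w \simeq w_1\underline{u}$ and \refaxioms{BF}{BF5}. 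In the other direction, a Pronk quintuple $(C, u, v, \alpha, \xi)$ goes to the colimit premorphism with apex $(C, w_1 u)$ and 2-cells $\beta_u := 1$, $\beta_v := \alpha$. These assignments are mutually inverse on representatives.

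Next, I would verify that this bijection descends to the equivalence classes and that identities and composition correspond. A colimit homotopy $(D, w_1', w_2', \alpha', \beta')$ unfolds by the same translation to Pronk's homotopy datum $(\bar C, h, \tilde h, \gamma, \delta)$; the condition that $w_1 u h$ lie in $\mathcal{W}$ is automatic because the apex sits in $\mathcal{W}/A$, and Pronk's equation \eqref{eq:eqbetweenfractions1} is encoded in the fact that $\alpha', \beta'$ are 2-cells of $\mathcal{W}/A$, not merely of $\mathcal{B}$. The colimit identity maps to Pronk's identity up to an elementary homotopy absorbing unitors. For composition, the formula \eqref{eq:compositioninthepseudocolim} requires an apex and invertible 2-cell produced via \ref{0-pFlt} in $(\mathcal{W}/A)^{\textnormal{op}}$, which by Lemma \ref{lem:liftofsquaresforU} unpacks precisely to an Ore square in $\mathcal{B}$ along $\mathcal{W}$ --- the datum Pronk's vertical composition needs.

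The hard part will be the coherence bookkeeping: verifying that the commutative rectangle \eqref{eq:LL} in $FC = \mathcal{B}(C,B)$, which involves the pseudo-functoriality constraints $F^2_{-,-}$ of $F_A^B$, translates exactly to Pronk's pasting equation \eqref{eq:eqbetweenfractions2}, and symmetrically that the long chain of $F^2_{-,-}$ cells appearing in \eqref{eq:compositioninthepseudocolim} collapses to Pronk's composition from \cite[p.258]{Pro96}. Since $F_A^B = \mathcal{B}(-,B) \circ U$ with $U$ strict, each $F^2_{-,-}$ is simply an associator of $\mathcal{B}$, so both translations reduce via the coherence theorem for bicategories to equalities of pastings in $\mathcal{B}$; these verifications are routine but tedious, and this is where most of the proof length will concentrate.
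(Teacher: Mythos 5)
Your proposal is correct and follows essentially the same route as the paper: unfold the formula of Proposition \ref{prop:filteredcolimit} for $(\mathcal{W}/A)^{\textnormal{op}}$ and $F_A^B$, translate premorphisms to Pronk's quintuples by composing/splitting the structure isomorphisms $\alpha_u,\alpha_v$ into a single $\alpha$, check that homotopies correspond (the two $\mathcal{W}/A$-compatibility conditions combining into \eqref{eq:eqbetweenfractions1} and the instance of \eqref{eq:LL} giving \eqref{eq:eqbetweenfractions2}), and match compositions by lifting the Ore square through Lemma \ref{lem:liftofsquaresforU} together with the independence of choices from Remark \ref{rem:oncompositioninthepseudocolim}. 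The only cosmetic difference is your specific choice of apex $(C,w_1u)$ with $\beta_u=1$ for the inverse assignment, where the paper allows an arbitrary $w\simeq w_1u$ in $\mathcal{W}$; both work, and the mutual inverseness holds on equivalence classes either way.
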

\begin{proof}
Computing the pseudo-colimit of $F_A^B$ using the formula in Proposition \ref{prop:filteredcolimit} gives the following category

\begin{enumerate}[align=left, leftmargin=*]
    \item[\textbf{Objects:}]
    Pairs $((C, w), f)$, which we can consider as triples $(C, w, f)$ as in Proposition \ref{prop:homsascolim}.
    
    \item[\textbf{Premorphisms:}]
    Quadruples $((C,w),(u,\alpha_u),(v,\alpha_v), \xi)\colon(C_1, w_1, f_1) \to (C_2, w_2, f_2)$, with $C$, $u$, $v$ and $\xi$ as in Proposition \ref{prop:homsascolim}, $w\colon C \wto A$ an arrow of $\mathcal{W}$, and $\alpha_u\colon w \simeq w_1u$ and $\alpha_v\colon w \simeq w_2v$ invertible 2-cells.
    
    Note that such a premorphism yields a quintuple as in Proposition \ref{prop:homsascolim} by taking $\alpha = \alpha_v \alpha_u^{-1}$.
    
    \item[\textbf{Homotopies:}]
    Quintuples 
    \[
        ((\bar{C},\bar{w}), (h, \eps), (\tilde{h}, \tilde{\eps}), \gamma, \delta) \colon ((C,w),(u,\alpha_u),(v,\alpha_v), \xi) \equiv ((\tilde{C},\tilde{w}),(\tilde{u},\tilde{\alpha_u}),(\tilde{v},\tilde{\alpha_v}), \tilde{\xi}),
    \]
    with $\bar{C}, h, \tilde{h},\gamma,\delta$ as in Proposition \ref{prop:homsascolim}, $\bar{w}\colon \bar{C} \wto C$ is an arrow in $\mathcal{W}$, $\eps\colon \bar{w} \simeq wh$ and $\tilde{\eps}\colon \bar{w} \simeq \tilde{w}\tilde{h}$ are invertible 2-cells and the following three equations hold (the first two state that $(h, \eps), (\tilde{h}, \tilde{\eps})$ are morphisms in $\mathcal{W}/A$, and the third one is \eqref{eq:LL})
    \[\xymatrix{
        & \tilde{C} \ar@/^/[dr]^{\tilde{u}}
            \ar@{}[d]|{\gamma \wr \Uparrow}
        &&&& \tilde{C} \ar@/^/[dr]^{\tilde{u}}
            \ar[dd]_{\tilde{w}}|{\circ} \\
        \bar{C} \ar@/^/[ur]^{\tilde{h}}
            \ar@/_/[dr]_{\bar{w}}|{\circ}
            \ar[r]^h
        & C \ar@{}[dl]|(0.4){\eps\wr\Uparrow}
            \ar@{}[dr]|(0.4){\alpha_u\wr\Uparrow} 
            \ar[r]^u 
            \ar[d]^w|{\circ}
        & C_1 \ar@/^/[dl]^{w_1}|{\circ}
        & =
        & \bar{C} \ar@/^/[ur]^{\tilde{h}}
            \ar@/_/[dr]_{\bar{w}}|{\circ}
        & \ar@{}[l]|{\tilde{\eps}\wr\Uparrow}
            \ar@{}[r]|{\tilde{\alpha_u}\wr\Uparrow}
        & C_1 \ar@/^/[dl]^{w_1}|{\circ} \\
        & A
        &&&& A
    }\]
    \[\xymatrix{
        & \tilde{C} \ar@/^/[dr]^{\tilde{v}}
            \ar@{}[d]|{\delta \wr \Uparrow}
        &&&& \tilde{C} \ar@/^/[dr]^{\tilde{v}}
            \ar[dd]_{\tilde{w}}|{\circ} \\
        \bar{C} \ar@/^/[ur]^{\tilde{h}}
            \ar@/_/[dr]_{\bar{w}}|{\circ}
            \ar[r]^h
        & C \ar@{}[dl]|(0.4){\eps\wr\Uparrow}
            \ar@{}[dr]|(0.4){\alpha_v\wr\Uparrow}
            \ar[r]^v
            \ar[d]^w|{\circ}
        & C_2 \ar@/^/[dl]^{w_2}|{\circ}
        & =
        & \bar{C} \ar@/^/[ur]^{\tilde{h}}
            \ar@/_/[dr]_{\bar{w}}|{\circ}
        & \ar@{}[l]|{\tilde{\eps}\wr\Uparrow}
            \ar@{}[r]|{\tilde{\alpha_v}\wr\Uparrow}
        & C_2 \ar@/^/[dl]^{w_2}|{\circ} \\
        & A
        &&&& A
    }\]
    \[\xymatrix{
        & \tilde{C} \ar[ddr]^{\tilde{u}}
            \ar[r]^{\tilde{v}}
        & C_2 \ar[dr]^{f_2}
        &&&& \tilde{C} \ar[r]^{\tilde{v}}
        & C_2 \ar[dr]^{f_2} \\
        \bar{C} \ar[ur]^{\tilde{h}}
            \ar[dr]_h \ar@{}[rr]|(0.4){\gamma \wr \Uparrow}
        &&& B \ar@{}[ll]|(0.4){\tilde{\xi} \Uparrow}
        & =
        & \bar{C} \ar[ur]^{\tilde{h}}
            \ar[dr]_h\ar@{}[rr]|(0.4){\delta \wr \Uparrow}
        &&& B \ar@{}[ll]|(0.4){\xi \Uparrow} \\
        & C \ar[r]^u
        & C_1 \ar[ur]_{f_1}
        &&&& C \ar[uur]^v
            \ar[r]^u
        & C_1 \ar[ur]_{f_1}
    }\]
\end{enumerate}

Note that, taking $\alpha = \alpha_v \alpha_u^{-1}$ and $\tilde{\alpha} = \tilde{\alpha}_v \tilde{\alpha_u}^{-1}$, and combining the first two equations, \eqref{eq:eqbetweenfractions1} follows and \eqref{eq:eqbetweenfractions2} is the third equation.
This shows that any homotopy between two premorphisms yields an equivalence between the assigned quintuples, and in this way we have an assignement on objects and arrows $H\colon \Colim{F^A_B} \to \cc{B}[\cc{W}^{-1}](A,B)$ that we will show is in fact an isomorphism of categories. 

First, we note that any quintuple $(C,u,v, \alpha, \xi)$ as in Proposition \ref{prop:homsascolim} defines a premorphism by choosing an arrow $w\colon C \wto A\in\cc{W}$, an isomorphism $\alpha_u\colon w \simeq w_1 u$ and putting $\alpha_v$ as the  composition $w  \Mreq{\alpha_u} w_1 u \Mreq{\alpha} w_2 v$.
And finally, we let the reader check that any quintuple $(\bar{C}, h, \tilde{h}, \gamma, \delta)$ as in Proposition \ref{prop:homsascolim} defines a homotopy between the so-defined premorphisms, by choosing an arrow $\bar{w}\colon \bar{C} \wto A \in \cc{W}$, an isomorphism $\bar{w} \simeq w_1 u h$ and putting
\[
    \eps\colon
        \bar{w}
        \simeq
        w_1 u h
        \Mreq{\alpha_u^{-1} \star h}
        w h,
    \qquad\qquad\qquad
    \tilde{\eps}\colon
        \bar{w}
        \simeq
        w_1 u h
        \Mreq{w_1 \star \delta}
        w_1 \tilde{u} \tilde{h}
        \Mreq{\tilde{\alpha}_u^{-1} \star \tilde{h}}
        \tilde{w} \tilde{h}.
\]
These constructions are clearly mutually inverse, so we have an assignement on objects and arrows $K\colon \cc{B}[\cc{W}^{-1}](A,B) \to  \Colim{F^A_B}$, strictly inverse to $H$.
$K$ maps identities to identities by definition, so we will finish the proof by showing that $K$ preserves the composition of the categories.

We consider thus two composable arrows of $\cc{B}[\cc{W}^{-1}](A,B)$,
\[
    (C_1,w_1,f_1)
    \xrightarrow{[(C,u,v, \alpha, \xi)]}
    (C_2,w_2,f_2)
    \xrightarrow{[(C',u',v', \alpha', \xi')]}
    (C_3,w_3,f_3),
\]
and an invertible 2-cell $\gamma$ fitting as follows:
\[\xymatrix{
    &&& D \ar[dr]^{g}
        \ar[dl]_{h}
        \ar@{}[dd]|{\Mreq{\gamma}} \\
    && C\ar[dl]_u
        \ar[dr]^v
    && C' \ar[dl]_{u'}
        \ar[dr]^{v'} \\
    & C_1 \ar[dl]_{w_1}|{\circ}
        \ar[drrrrr]_(0.7){f_1}|!{[dl];[rr]}\hole|!{[dl];[rrrr]}\hole
    && C_2 \ar[dlll]_(0.7){w_2}|(0.7){\circ}
        \ar[drrr]^{f_2}|!{[dlll];[rr]}\hole
    && C_3\ar[dlllll]^(0.7){w_3}|(0.7){\circ}
        \ar[dr]^{f_3} \\
    A
    &&&&&& B
}\]
In the situation in \cite{Pro96}, when defining the vertical composition in the bicategory of fractions, $\gamma$ is a chosen square, and the new quintuple whose class gives the composition is defined by pasting respectively the $\alpha$'s with $\gamma$, and the $\xi$'s with $\gamma$ (see \cite[p.258]{Pro96} for details). 

We can apply $K$ to this new arrow of $\cc{B}[\cc{W}^{-1}](A,B)$, and we claim that we get the same premorphism if we \emph{lift} $\gamma$ to a 2-cell of $\cc{W}/A$ and use this 2-cell to compute the composition between the two induced premorphisms of $\Colim{F^A_B}$, this is just a matter of following these constructions: we first apply $K$ to each of the two arrows above:
\[
    ((C_1,w_1),f_1)
    \xrightarrow{[(C,w)(u,\alpha_u)(v,\alpha \alpha_u), \xi)]}
    ((C_2,w_2),f_2) \xrightarrow{[(C',w')(u',\alpha_{u'})(v',\alpha' \alpha_{u'} ), \xi')]}
    ((C_3,w_3),f_3).
\]
We see then that we can compose these premorphisms of $\Colim{F^A_B}$ as in the item \textbf{Composition} in Proposition \ref{prop:filteredcolimit}, by choosing an invertible 2-cell in $(\cc{W}/A)^{\textnormal{op}}$ of the following form 
\[\xymatrix{
    & X \ar@{<.}[ld]
        \ar@{<.}[rd] \\
    (C,w) \ar@{<-}[rd]_{(v,\alpha \alpha_u)}
        \ar@{}[rr]|{\simeq}
    && (C',w') \ar@{<-}[ld]^{(u',\alpha_{u'})} \\
    & (C_2,w_2)
}\]
and in fact the composition in the category $\Colim{F^A_B}$ is independent of this choice, as shown in Remark \ref{rem:oncompositioninthepseudocolim}. 
If we choose this 2-cell by lifting $\gamma$ to $\cc{W}/A$ as below, using Lemma \ref{lem:liftofsquaresforU}:
\[
    \vcenter{\xymatrix{
        & D \ar[ld]_h
            \ar[rd]^g \\
        C \ar[rd]_v
            \ar@{}[rr]|{\Mreq{\gamma}}
        && C' \ar[ld]^{u'} \\
        & C_2
    }}
    \qquad \ml{U} \qquad
    \vcenter{\xymatrix{
        & (D,wh) \ar@{.>}[ld]_{(h,1_{wh})}
            \ar@{.>}[rd]^{(g,\square)} \\
        (C,w) \ar[rd]_{(v, \alpha \alpha_u)}
            \ar@{}[rr]|{\Mreq{\gamma}}
        && (C',w') \ar[ld]^{(u',\alpha_{u'})} \\
        & (C_2,w_2)
    }}
\]
then we can compute the composition of the premorphisms by replacing the values in the formula \eqref{eq:compositioninthepseudocolim}.
We let the reader check that applying the assignement $H\colon \Colim{F^A_B} \to \cc{B}[\cc{W}^{-1}](A,B)$ described above to this premorphism 
gives back precisely the formulas in \cite[p.258]{Pro96}.
\end{proof}

\begin{remark}\label{rem:vertcompositioninfractions}
We note that this proof shows, in particular, that the vertical composition of 2-cells in \cite[p.258]{Pro96} does not depend on the choice of the 2-cell $\gamma$ in $\cc{B}$. 
This result is proved directly in \cite[Prop. 5.1]{Tom16}.
Note that Propositions \ref{prop:homsascolim} (the homs of the bicategory of fractions are  filtered pseudo-colimits of categories) and \ref{prop:filteredcolimit} (filtered pseudo-colimits of categories are  1-categories of fractions), combined, state that the homs of the bicategory of fractions  are 1-categories of fractions.
In view of this, and as explained in Remark \ref{rem:oncompositioninthepseudocolim}, we show here that the basic reason why this vertical composition is independent of this choice is that the composition of arrows in the 1-category of fractions is independent of the choice of the commutative square.
\end{remark}    

\subsection{On the exactness of the calculus of fractions}\label{subsec:exactness}
We show now how Proposition \ref{prop:homsascolim} is one of the two \emph{ingredients} needed in order to generalize the exactness of the localization by fractions from categories (\cite[I, Prop 3.1]{GZ67}) to bicategories.
The other required result is the commutativity in $\Cat$ of filtered pseudo-colimits with finite limits.
This has been shown in \cite{Can16}, \cite{DDS18} for the \emph{strict} case, and we generalize it here to the \emph{bicategorical} case.

Let $\mathcal{B}$, $\mathcal{C}$ be bicategories, $F\colon \mathcal{B} \times \mathcal{C} \to \Cat$, $W\colon \mathcal{C} \to \Cat$ be  pseudo-functors.
We denote $W$-weighted bilimits by $\Lim_{W}$ and conical bicolimits, with indexing bicategory $\cc{B}$, by $\Colim_{\mathcal{B}}$. 
There is a canonical functor
\begin{equation}\label{eq:diamondcomparison}
    \Diamond \colon \Colim_{\mathcal{B}} \Lim_{W} F \longrightarrow \Lim_{W} \Colim_{\mathcal{B}} F
\end{equation}
and the ($W$-weighted) bilimit is said to commute with the (conical) bicolimit when this functor is an equivalence of categories (of course, this makes sense for any bicategory $\cc{D}$ in place of $\Cat$, considering equivalences in $\cc{D}$ instead).
In \cite{Can16}, \cite{DDS18}, this is shown to be the case when $\cc{B}$ is filtered, $W$ is a finite weight (which implies in particular that $\cc{C}$ is finite, see \cite[Def. 3.1]{DDS18}), and assuming furthermore when $\cc{B}$, $\cc{C}$, $W$, and $F$ are required to be strict:

\begin{theorem}\label{th:exactnessstrict}
Let $\cc{B}$, $\cc{C}$ be (strict) 2-categories such that $\cc{B}$ is filtered, let $W\colon \mathcal{C} \to \Cat$ be a (strict) 2-functor that is a finite weight, and let $F\colon \mathcal{B} \times \mathcal{C} \to \Cat$ be a (strict) 2-functor.
Then the canonical functor $\Diamond$ in \eqref{eq:diamondcomparison} is an equivalence of categories.
\qed
\end{theorem}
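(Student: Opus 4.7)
My plan is to reduce the statement to the commutativity of filtered pseudo-colimits with a small class of elementary 2-limits, and then verify each case using the explicit description of filtered pseudo-colimits. The first step is to invoke the classical PIE decomposition of finite weighted 2-limits: every such limit in $\Cat$ can be constructed from finite products, inserters, and equifiers. Consequently, since both sides of $\Diamond$ preserve composition of 2-limits, it suffices to verify the commutation of the filtered pseudo-colimit with each of these three elementary types of 2-limit separately.

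For the explicit verification, I would use the formula for $\Colim_{\cc{B}} F$ given in \cite{DS06}, which is the strict specialization of Proposition \ref{prop:filteredcolimit}: an object is an equivalence class of pairs $(B, x)$, a premorphism is a quadruple $(C, u, v, \xi)$, and the homotopy relation is as described there. With this explicit presentation both sides of the comparison functor $\Diamond$ in \eqref{eq:diamondcomparison} become concrete categories of equivalence classes, and essential surjectivity and full faithfulness can be checked directly. For finite products, one represents a tuple of equivalence classes defined over different bases $B_i$ simultaneously over a single base $B$ by iterated application of \ref{0-Flt}; arrows and equalities between arrows are handled similarly using \ref{1-Flt} and \ref{2-Flt}. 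Note that here the fact that $\cc{B}$ is genuinely filtered (equivalently, pseudofiltered and connected, by Proposition \ref{prop:connected}) is essential, since products are indexed by a disconnected diagram. For inserters, essential surjectivity amounts to lifting a morphism $\phi$ between the images in $\Colim_{\cc{B}} G$ of an object of $\Colim_{\cc{B}} F$ to a morphism at the level of some base $B\colon \cc{B}$; this is exactly what \ref{1-Flt} provides. For equifiers, the coequalization of two 2-cells after passing to the colimit is precisely what \ref{2-Flt} provides.

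The main obstacle is the coherence bookkeeping in the inserter and equifier cases. An arrow in the filtered pseudo-colimit is a class of premorphisms under the homotopy relation, and a given arrow admits many representatives at different bases $B$ linked by zig-zags of homotopies. To exhibit a preimage under $\Diamond$ one must choose compatible representatives for all the data simultaneously, and then verify that these choices admit a common refinement—for which one iterates \ref{0-Flt}, \ref{1-Flt}, and \ref{2-Flt} and uses that the relevant data are finite. Aside from this coherence bookkeeping, the remaining arguments are routine applications of the filtered axioms, and, as noted in the paragraph preceding the statement, the result appears in \cite{Can16,DDS18}.
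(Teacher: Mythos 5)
The paper itself does not prove this statement: it is quoted from \cite{Can16} and \cite{DDS18} and stated with an empty proof, the only hint of the argument being the remark in the proof of Corollary \ref{coro:exactnessinCat} that those references proceed by decomposing finite weighted bilimits into biproducts, bicotensors with a finite category and biequalizers, and then checking each case against the explicit presentation of filtered pseudo-colimits. Your sketch follows exactly that strategy, with the equivalent generating class of products, inserters and equifiers, so in substance it reproduces the cited proof rather than offering an alternative route. Two points deserve care. First, the opening claim that every finite weighted 2-limit in $\Cat$ is built from products, inserters and equifiers is false for \emph{strict} 2-limits (equalizers are not PIE); it holds here only because $\Lim_W$ denotes the bilimit and $\Diamond$ need only be an equivalence, so the decomposition must be taken in the bicategorical sense going back to \cite{Str80}. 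Second, in the inserter and equifier steps the data to be rectified are arrows of the value categories $F(B,C)$ rather than 2-cells of $\cc{B}$, so \ref{1-Flt} and \ref{2-Flt} do not by themselves produce the required lifts: they must be combined with a zig-zag analysis of the homotopy relation in the explicit colimit formula, in the spirit of the proof of Lemma \ref{lem:pi0}. That analysis is precisely the ``coherence bookkeeping'' you defer, and it is where the substance of the proofs in \cite{Can16} and \cite{DDS18} lies; as a blueprint, however, your outline is sound.
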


\begin{remark} \label{rem:basicpropofbilimits}
Let $\mathcal{C},\mathcal{D}$ be bicategories and $F\colon \mathcal{C} \to \cc{D}$, $W\colon \mathcal{C} \to \Cat$ be pseudo-functors.
Recall that the $W$-weighted bilimit $\Lim_{W} F$ is defined as a birepresentation of the pseudo-functor (pseudo-presheaf) given by the composition
\[\begin{tikzcd}
	{\textnormal{Cones}_W^F(-) \colon \mathcal{D}^{\textnormal{op}}}
	& {[\mathcal{D}, \Cat]}
	& {[\mathcal{C}, \Cat]}
	&& \Cat
	\arrow["{\yo_{\mathcal{D}^{\textnormal{op}}}}", from=1-1, to=1-2]
	\arrow["{F^*}", from=1-2, to=1-3]
	\arrow["{[\mathcal{C}, \Cat](W,-)}", from=1-3, to=1-5]
\end{tikzcd}\]
where $\yo$ denotes the Yoneda embedding and $F^*$ denotes the pre-composition by $F$.
Note that this pseudofunctor maps $D\colon \mathcal{D}$ to the category of pseudo-natural transformations
\[
    \textnormal{Cones}_W^F(D) = [\cc{C},\Cat](W, \cc{D}(D,F(-)))
\]
from $W$ to $\cc{D}(D,F(-)) \colon \mathcal{C}\to \Cat$, which is the formula that can be found in the original definition of weighted bilimit in \cite[(1.12)]{Str80}.

We recall that, by definition, being birepresentable is a property that is stable under equivalences in $[\cc{D}^{\textnormal{op}}, \Cat]$ (see \cite[(1.11)]{Str80}).
Hence, if another indexing bicategory $\mathcal{C}'$, another weight  $W'\colon \mathcal{C}' \to \Cat$, and another pseudo-functor $F'\colon \mathcal{C}' \to \cc{D}$ induce an equivalent pseudo-presheaf, then the bilimit $\Lim_{W} F$ will coincide with $\Lim_{W'} F'$.
We will use this to show in \textbf{(A)} and \textbf{(B)} how \emph{equivalent} choices of $\mathcal{C}$, $W$ and $F$ lead to equivalent pseudo-presheaves and hence to the same bilimit.
Recall also (see Remark \ref{rem:equivalences}, and for example \cite[1.10]{PW14} for a proof) that equivalences are pointwise in functor bicategories.

\begin{enumerate}[align=left, leftmargin=*, label = \textbf{(\Alph*)}]
    \item
    For each pseudo-functor $S\colon \mathcal{C}' \to \mathcal{C}$, we can consider $S^*\colon [\mathcal{C}, \Cat]\to [\mathcal{C}', \Cat]$, and we have an induced pseudo-natural transformation given by the pasting
    \begin{equation} \label{eq:justapasting}\begin{tikzcd}
    	{\mathcal{D}^{\textnormal{op}}}
    	& {[\mathcal{D}, \Cat]}
    	& {[\mathcal{C}, \Cat]}
    	&& \Cat \\
    	&& {[\mathcal{C}', \Cat]}
    	\arrow["{\yo_{\mathcal{D}^{\textnormal{op}}}}", from=1-1, to=1-2]
    	\arrow["{F^*}", from=1-2, to=1-3]
    	\arrow[""{name=0, anchor=center, inner sep=0}, "{[\mathcal{C}, \Cat](W,-)}", from=1-3, to=1-5]
    	\arrow["{S^*}", from=1-3, to=2-3]
    	\arrow["{(FS)*}"', from=1-2, to=2-3]
    	\arrow[""{name=1, anchor=center, inner sep=0}, "{[\mathcal{C}, \Cat](WS,-)}"', curve={height=12pt}, from=2-3, to=1-5]
    	\arrow["{S^*_{W,-}}"', shorten >=3pt, Rightarrow, from=0, to=1]
    \end{tikzcd}\end{equation}
    where the triangle on the left-hand side is strictly commutative and the 2-cell on the right-hand side is given by the local hom-functors of the pseudofunctor $S^*$,
    \[
        S^*_{W,-} \colon [\mathcal{C}, \Cat](W,-) \to [\mathcal{C}', \Cat](S^*(W),S^*(-)).
    \]
    If $S$ is a biequivalence, then so is $S^*$ and, since biequivalences are locally equivalences and natural pointwise equivalences are equivalences, so is the pseudo-natural transformation in \eqref{eq:justapasting}.
    As we explained above, we then have $\Lim_{W} F= \Lim_{WS} FS$.
    
    \item
    For each pair of pseudo-natural tranformations $\alpha\colon W' \Rightarrow W$ and $\beta\colon F \Rightarrow F'$, we have an induced pseudo-natural transformation given by the pasting
    \begin{equation}\label{eq:justanotherpasting}\begin{tikzcd}
    	{\mathcal{D}^{\textnormal{op}}}
    	& {[\mathcal{D}, \Cat]}
    	& {[\mathcal{C}, \Cat]}
    	&& \Cat
    	\arrow["{\yo_{\mathcal{D}^{\textnormal{op}}}}", from=1-1, to=1-2]
    	\arrow[""{name=0, anchor=center, inner sep=0}, "{F^*}", curve={height=-12pt}, from=1-2, to=1-3]
    	\arrow[""{name=1, anchor=center, inner sep=0}, "{[\mathcal{C}, \Cat](W,-)}", curve={height=-12pt}, from=1-3, to=1-5]
    	\arrow[""{name=2, anchor=center, inner sep=0}, "{[\mathcal{C}, \Cat](W',-)}"', curve={height=12pt}, from=1-3, to=1-5]
    	\arrow[""{name=3, anchor=center, inner sep=0}, "{F'^*}"', curve={height=12pt}, from=1-2, to=1-3]
    	\arrow["{\hat{\alpha}}", shorten <=3pt, shorten >=3pt, Rightarrow, from=1, to=2]
    	\arrow["{\beta^*}", shorten <=3pt, shorten >=3pt, Rightarrow, from=0, to=3]
    \end{tikzcd}\end{equation}
    where $\hat{\alpha}$ denotes the pseudo-natural transformation $\yo_{[\mathcal{C}, \Cat]^{\textnormal{op}}}(\alpha) = [\mathcal{C}, \Cat](\alpha,-)$.
    Similarly to \textbf{(A)}, if $\alpha$ and $\beta$ are equivalences, then so is the pseudo-natural transformation in \eqref{eq:justanotherpasting} and we have $\Lim_{W} F= \Lim_{W'} F$. 
\end{enumerate}
Similarly, one can do the same operations \textbf{(A)} and \textbf{(B)} for (weighted) colimits.
\end{remark}

Using this remark, we can now generalize the commutativity result in Theorem \ref{th:exactnessstrict} to the \emph{bicategorical} setting:

\begin{corollary}\label{coro:exactnessinCat} 
The result in Theorem \ref{th:exactnessstrict} also holds for $\cc{B}$, $\cc{C}$ bicategories, and $W$, $F$ pseudo-functors.
\end{corollary}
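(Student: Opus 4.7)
The plan is to reduce the bicategorical statement to the strict Theorem \ref{th:exactnessstrict} by replacing every piece of data with equivalent strict data, using the invariance of weighted bilimits and bicolimits under equivalences recorded in Remark \ref{rem:basicpropofbilimits}.

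First I would invoke the bicategorical coherence theorem to choose biequivalences $S\colon \mathcal{B}_0 \to \mathcal{B}$ and $T\colon \mathcal{C}_0 \to \mathcal{C}$ where $\mathcal{B}_0$, $\mathcal{C}_0$ are strict 2-categories (and $\mathcal{C}_0$ can be chosen finite whenever $\mathcal{C}$ is). The 2-category $\mathcal{B}_0$ is filtered: any finite diagram in $\mathcal{B}_0$ is sent by $S$ to a finite diagram in $\mathcal{B}$, for which Definition \ref{def:filtered} supplies a pseudo-cocone, which is then transported back to $\mathcal{B}_0$ along a pseudo-inverse of $S$.

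Next, I would pull back the given data: set $\widetilde{F} = F \circ (S \times T) \colon \mathcal{B}_0 \times \mathcal{C}_0 \to \Cat$ and $\widetilde{W} = W \circ T \colon \mathcal{C}_0 \to \Cat$. By Remark \ref{rem:basicpropofbilimits}\textbf{(A)}, applied separately to the outer colimit and the inner limit (and their duals), both sides of the canonical comparison \eqref{eq:diamondcomparison} for $(F, W)$ are equivalent to the corresponding sides for $(\widetilde{F}, \widetilde{W})$, with these equivalences intertwining the two comparison functors $\Diamond$. Then I would apply a strictification theorem for $\Cat$-valued pseudo-functors out of a 2-category (Power-type coherence) to produce 2-functors $F'$, $W'$ together with pseudo-natural equivalences $\widetilde{F} \simeq F'$ and $\widetilde{W} \simeq W'$. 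By Remark \ref{rem:basicpropofbilimits}\textbf{(B)} the same invariance applies once more, and $W'$ remains a finite weight. Applying Theorem \ref{th:exactnessstrict} to the strict data $(F', W')$ with $\mathcal{B}_0$ filtered and $\mathcal{C}_0$ a 2-category yields that the corresponding $\Diamond$ is an equivalence, hence so is the original one.

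The main obstacle I anticipate is bookkeeping around finiteness: I must verify that the notion of finite weight from \cite{Can16,DDS18} is preserved by the two successive replacements, or else rephrase finiteness in a form (e.g.\ in terms of the pseudo-presheaf $\textnormal{Cones}_W^F$) that is manifestly invariant under equivalences. A secondary subtlety is choosing a precise strictification statement so that one obtains genuine 2-functors $F', W'$, not merely pseudo-functors, with pseudo-natural equivalences to the $\widetilde{F}, \widetilde{W}$; this is standard but should be cited carefully, since the strict case in \cite{Can16,DDS18} is sensitive to strictness on the nose.
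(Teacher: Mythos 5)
Your treatment of $\mathcal{B}$ and $F$ matches the paper's proof: strictify $\mathcal{B}$ to a biequivalent 2-category, transfer the comparison along item \textbf{(A)} of Remark \ref{rem:basicpropofbilimits}, then replace the $\Cat$-valued pseudo-functor by an equivalent 2-functor (Power's coherence) and transfer along item \textbf{(B)}. The divergence, and the genuine gap, is in your handling of $\mathcal{C}$ and $W$. The issue you dismiss as ``bookkeeping around finiteness'' is the step that fails: the strictification $\mathcal{C}_0$ of a finite bicategory supplied by the coherence theorem is \emph{not} finite (its 1-cells are formal composable strings of 1-cells of $\mathcal{C}$), and the notion of finite weight in \cite{Can16,DDS18} explicitly requires the indexing 2-category to be finite. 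Hence $\widetilde{W}=W\circ T$ is not a finite weight on $\mathcal{C}_0$ and Theorem \ref{th:exactnessstrict} cannot be applied to the strictified data. It is not clear that every finite bicategory admits a finite strict model, nor that ``finite weight'' is invariant under biequivalence of the indexing bicategory, and you offer no argument for either.

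The paper sidesteps this entirely by a different first reduction: finite weighted bilimits are generated by bicotensors with a finite category, biproducts, and biequalizers (going back to \cite{Str80}; see \cite{DDS18a}, \cite{Can16}), so it suffices to treat those three cases, in each of which $\mathcal{C}$ and $W$ are already strict and finite. Only $\mathcal{B}$ and $F$ then need to be replaced, exactly as in the part of your argument that works. If you prepend that reduction, the remainder of your proposal goes through and coincides with the paper's proof.
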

\begin{proof}
We first note that, since finite weighted bilimits can be constructed using bicotensors with a finite category, biproducts and biequalizers (this result goes back to \cite{Str80}, see also \cite[p.208]{DDS18a} and \cite[Cor. 6.12]{Can16} for a detailed explanation and proof), it suffices to prove Corollary \ref{coro:exactnessinCat} in these three cases (which is in fact precisely how Theorem \ref{th:exactnessstrict} is proved in \cite{DDS18a} and \cite{Can16}).
Observe that in these three cases $\cc{C}$ and $\cc{W}$ are already strict, so we can assume this for this proof, and we do so in what follows.

We consider the 2-category ${\sf st}(\cc{B})$, the biequivalence ${\sf st}(\cc{B}) \simeq \cc{B}$, and the induced biequivalence ${\sf st}(\cc{B}) \times \cc{C} \simeq \cc{B} \times \cc{C}$. 
Note that, since they are equivalent bicategories, if ${\sf st}(\cc{B})$ is filtered, then so is $\cc{B}$.
Composing $F$ with this pseudo-functor, and applying item \textbf{(A)} in Remark \ref{rem:basicpropofbilimits}, we see that we can assume without loss of generality $\cc{B}$ to be a 2-category.
We can then use the fact (see \cite[4.2]{Pow89}, or the nLab entry on pseudo-functors) that any $\Cat$-valued pseudo-functor with domain a 2-category is equivalent to a 2-functor.
Using item {\bf (B)} in Remark \ref{rem:basicpropofbilimits} (taking $\beta$ to be this equivalence, and $\alpha$ the identity), we can then also assume without loss of generality that $F$ is a 2-functor, as in Theorem \ref{th:exactnessstrict}.
\end{proof}

Combining Corollary \ref{coro:exactnessinCat} with Proposition \ref{prop:homsascolim}, we have:

\begin{theorem}\label{th:exactness}
Let $\mathcal{B}$ be a bicategory and $\mathcal{W}$ be a (right) calculus of fractions.
Then the localization pseudo-functor $L_{\mathcal{W}}: \mathcal{B} \to \mathcal{B}[\mathcal{W}^{-1}]$ commutes with finite weighted bilimits.
In other words, for $\mathcal{C}$ a finite bicategory, $W: \mathcal{C} \to \Cat$ a finite weight, and $F: \mathcal{C} \to \mathcal{B}$ a pseudo-functor, such that the finite weighted bilimit of $F$ by $W$ exists, the finite weighted bilimit of $L_{\mathcal{W}}\circ F$ by $W$ exists and the canonical arrow
\[
    L_{\mathcal{W}}(\Lim_W F) \longrightarrow \Lim_W(L_{\mathcal{W}} \circ F)
\]
is an equivalence in $\mathcal{B}[\mathcal{W}^{-1}]$.
\end{theorem}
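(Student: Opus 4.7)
The plan is to mimic the classical one-dimensional argument from \cite[I, Prop 3.1]{GZ67}, built on the two key ingredients established earlier in the paper: Proposition \ref{prop:homsascolim}, which presents the hom-categories of $\mathcal{B}[\mathcal{W}^{-1}]$ as filtered pseudo-colimits of hom-categories of $\mathcal{B}$ indexed by $(\mathcal{W}/X)^{\textnormal{op}}$, and Corollary \ref{coro:exactnessinCat}, which states that filtered pseudo-colimits commute with finite weighted bilimits in $\Cat$.

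First, by the Yoneda-style universal property of birepresentations recalled in Remark \ref{rem:basicpropofbilimits}, it suffices to construct, pseudo-naturally in each $X\colon \mathcal{B}[\mathcal{W}^{-1}]$, an equivalence of categories
\[
    \mathcal{B}[\mathcal{W}^{-1}]\bigl(X, L_{\mathcal{W}}(\Lim_W F)\bigr)
    \;\simeq\;
    [\mathcal{C}, \Cat]\bigl(W,\, \mathcal{B}[\mathcal{W}^{-1}](X, L_{\mathcal{W}} F(-))\bigr),
\]
since the right-hand side is by definition the pseudo-presheaf whose birepresentation is $\Lim_W(L_{\mathcal{W}} \circ F)$. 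The canonical comparison arrow in the statement of the theorem will then be an equivalence.

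I would obtain this equivalence as a chain of four steps. Step one: apply Proposition \ref{prop:homsascolim} to rewrite the left-hand side as a filtered pseudo-colimit $\Colim_{(\mathcal{W}/X)^{\textnormal{op}}} \mathcal{B}(U(-), \Lim_W F)$; the required filteredness of $(\mathcal{W}/X)^{\textnormal{op}}$ is provided by Lemma \ref{lem:strongWslicecofiltered}, given that $\mathcal{W}$ admits right fractions. Step two: use the defining universal property of the bilimit $\Lim_W F$ in $\mathcal{B}$ (applied at each object $(C,w)$ of $\mathcal{W}/X$) to pull the hom through, obtaining $\Colim_{(\mathcal{W}/X)^{\textnormal{op}}} \Lim_W \mathcal{B}(U(-), F(-))$. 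Step three, the crucial one: apply Corollary \ref{coro:exactnessinCat} to swap the filtered pseudo-colimit with the finite $W$-weighted bilimit in $\Cat$, yielding $\Lim_W \Colim_{(\mathcal{W}/X)^{\textnormal{op}}} \mathcal{B}(U(-), F(-))$. Step four: apply Proposition \ref{prop:homsascolim} in the opposite direction, object by object in $\mathcal{C}$, to recognise the inner pseudo-colimit as $\mathcal{B}[\mathcal{W}^{-1}](X, L_{\mathcal{W}} F(-))$, arriving at the target right-hand side.

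The main obstacle will be the pseudo-naturality of the composite equivalence in $X$. Each individual step is natural in $X$ in the evident way, but to assemble them into a pseudo-natural transformation of $\Cat$-valued pseudo-presheaves on $\mathcal{B}[\mathcal{W}^{-1}]^{\textnormal{op}}$ one must organise the slice assignment $X \mapsto (\mathcal{W}/X)^{\textnormal{op}}$ together with the forgetful $U$ as sufficiently coherent data, and check that the identification of Proposition \ref{prop:homsascolim} is itself pseudo-natural in $X$. This is essentially bookkeeping in the bicategorical setting, not a conceptual difficulty, but it is the place where a careful proof must spend its time; once it is established, the theorem follows formally from the four equivalences above.
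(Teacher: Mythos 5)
Your proposal is correct and follows essentially the same route as the paper's proof: reduce to hom-categories, rewrite them via Proposition \ref{prop:homsascolim} as filtered pseudo-colimits over $(\mathcal{W}/A)^{\textnormal{op}}$, pull the weighted bilimit inside using that representables preserve bilimits, and invoke Corollary \ref{coro:exactnessinCat} to commute the pseudo-colimit with the finite bilimit. The only difference is that the paper sidesteps the pseudo-naturality bookkeeping you flag at the end by working throughout with the already-canonical (hence already natural) comparison functor and verifying that it is an equivalence objectwise.
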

\begin{proof}
The proof is formally similar to the one in \cite[I, Prop 3.1]{GZ67}. 
We want to show that for each $A \colon \mathcal{B}$ the canonical functor
\[
    \mathcal{B}[\mathcal{W}^{-1}](A,\Lim_W F)
    \longrightarrow
    \Lim_W \mathcal{B}[\mathcal{W}^{-1}](A,F(-))
\]
is an equivalence of categories.
By Proposition \ref{prop:homsascolim}, this amounts to showing that so is
\[
    \Colim_{(\mathcal{W}/A)^{\textnormal{op}}}\mathcal{B}(U_A(-), \Lim_W F)
    \longrightarrow
    \Lim_W\Colim_{(\mathcal{W}/A)^{\textnormal{op}}}\mathcal{B}(U_A(-),F(-))
\]
or, equivalently, since representables preserve limits \cite[(1.21)]{Str80},
\[
    \Colim_{(\mathcal{W}/A)^{\textnormal{op}}} \Lim_W \mathcal{B}(U_A(-), F(-))
    \longrightarrow
    \Lim_W\Colim_{(\mathcal{W}/A)^{\textnormal{op}}}\mathcal{B}(U_A(-),F(-)),
\]
where $U_A$ denotes the forgetful strict functor from ${\mathcal{W}/A}$. 
This is precisely the content of Corollary \ref{coro:exactnessinCat} (for the pseudo-functor
${\mathcal{W}/A}^{\textnormal{op}} \times \cc{C} \xr{U_A \times F} \cc{B}^{\textnormal{op}} \times \cc{B} \xr{\cc{B}(-,-)} \Cat$).
\end{proof}

\bibliographystyle{alpha}
\bibliography{source}

\end{document}